\DeclareSymbolFont{EulerExtension}{U}{euex}{m}{n}
\DeclareMathSymbol{\euintop}{\mathop} {EulerExtension}{"52}
\DeclareMathSymbol{\euointop}{\mathop} {EulerExtension}{"48}
\def \id{\operatorname{id}}
\def \C{\mathcal{C}}
\def \N{\mathbb{N}}
\def \Z{\mathbb{Z}}
\def \k{\Bbbk}
\def \C{\mathcal{C}}
\def \D{\Delta}
\def \N{\mathbb{N}}
\def \A{\mathcal{A}}
\def \B{\mathcal{B}}
\def \C{\mathcal{C}}
\def \D{\mathcal{D}}
\def \E{\mathcal{E}}
\def \F{\mathcal{F}}
\def \G{\mathcal{G}}
\def \X{\mathcal{X}}
\def \Y{\mathcal{Y}}
\def \Z{\mathcal{Z}}
\def \la{\lambda}
\def \om{\omega}
\def\joinrel{\mathrel{\mkern-4mu}}
\newcommand{\blackltimes}{\mathop{\raisebox{0.2ex}{${\scriptstyle \blacktriangleright\joinrel<}$}}}
\newcommand{\dotltimes}
{\mathop{\raisebox{0.12ex}{$\shortmid$}\raisebox{0.2ex}{\makebox[0.86em][r]{${\scriptstyle\gtrdot\joinrel<}$}}}}
\numberwithin{equation}{section}
\newtheorem{theorem}{Theorem}[section]
\newtheorem{lemma}[theorem]{Lemma}
\newtheorem{proposition}[theorem]{Proposition}
\newtheorem{corollary}[theorem]{Corollary}
\newtheorem{definition}[theorem]{Definition}
\newtheorem{example}[theorem]{Example}
\newtheorem{remark}[theorem]{Remark}
\newtheorem{question}[theorem]{Question}
\newtheorem{notation}[theorem]{Notation}
\begin{document}
\title{The Link-Indecomposable Components of Hopf Algebras and Their Products}

\author{Kangqiao Li}
\ead{kqli@nju.edu.cn}
\address{Department of Mathematics, Nanjing University, Nanjing 210093, China}

\date{}

\begin{abstract}
The link relation on simple subcoalgebras is used for decompositions of coalgebras. In this paper, we provide more sufficient conditions for this link relation, and prove a formula on the products between link-indecomposable components of Hopf algebras with the dual Chevalley property.
Furthermore, we show that each of its component is generated by a simple subcoalgebra,
as a faithfully flat module (in fact, a projective generator) over a Hopf subalgebra which is the component containing the unit element.
Our conclusions generalize some relevant results on pointed Hopf algebras, which were established by Montgomery in 1995.
\end{abstract}

\begin{keyword}
Hopf algebra \sep Indecomposable coalgebra \sep Link relation \sep Faithful flatness

\MSC[2020] 16T05 \sep 16T15
\end{keyword}

\maketitle

\section{Introduction}

It is known in Kaplansky \cite{Kap75} that any coalgebra could be written uniquely as a direct sum of indecomposable subcoalgebras. The notion of the link relation (also known as the connected relation) on simple subcoalgebras is a theoretical way to determine the direct summands, which are referred as the link-indecomposable components. This was firstly shown by Shudo and Miyamoto \cite{SM78}. Later in 1995, Montgomery \cite{Mon95} refined the related knowledge with the language of quivers, and studied properties of the link-indecomposable components of a pointed Hopf algebra.
For any pointed Hopf algebra $H$, she established a formula on the products of link-indecomposable components. As consequences, $H$ is free over a normal Hopf subalgebra $H_{(1)}$ which is exactly the link-indecomposable component containing the unit element, and $H$ is furthermore a crossed product of a group over $H_{(1)}$.

This paper is devoted to generalize some of these main results in \cite{Mon95} to non-pointed Hopf algebras. Denote the link-indecomposable component of $H$ containing the simple subcoalgebra $E$ by $H_{(E)}$, which is a subcoalgebra of $H$, and note again that $H$ is the direct sum of its different link-indecomposable components. Our final result is Theorem \ref{thm:DCPnew}, stating that:
\begin{theorem}\label{thm.1}
Let $H$ be a Hopf algebra over an arbitrary field $\k$ with the dual Chevalley property. Denote the set of all the simple subcoalgebras of $H$ by $\mathcal{S}$. Then
\begin{itemize}
  \item[(1)] For any $C\in\mathcal{S}$, $H_{(C)}=CH_{(1)}=H_{(1)}C$;
  \item[(2)] For any $C,D\in\mathcal{S}$,
    $H_{(C)}H_{(D)}\subseteq\sum\limits_{E\in\mathcal{S},\;E\subseteq CD}H_{(E)}$;
  \item[(3)] $H_{(1)}$ is a Hopf subalgebra.
\end{itemize}
\end{theorem}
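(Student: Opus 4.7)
I would prove the three parts in the order (2), (1), (3), since (1) and (3) will be essentially corollaries of (2) combined with symmetry properties of the link relation. The crux is the product formula (2), and I expect everything to reduce to a \emph{link-compatibility} statement: if $C'$ is linked to $C$ and $D'$ is linked to $D$ in $\mathcal{S}$, then every simple subcoalgebra of $C'D'$ is linked to some simple subcoalgebra of $CD$.

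\textbf{Proof of (2).} I would write each of $H_{(C)}$ and $H_{(D)}$ as the subcoalgebra generated by the simples linked to $C$ and $D$ respectively, i.e., a directed union of iterated wedges of those simples. Then $H_{(C)}H_{(D)}$ is a sum of products of such wedges. The dual Chevalley property, by which $H_0$ is a Hopf subalgebra, ensures that products of simple subcoalgebras remain inside $H_0$ and that the coradical filtration is multiplicative; this keeps the computation within the regime where the sufficient conditions for linkage established earlier in the paper apply. The link-compatibility statement then forces every simple subcoalgebra $F$ appearing inside any such $C'D'$ to be linked to some simple $E \subseteq CD$, yielding $H_{(C)}H_{(D)} \subseteq \sum_{E \in \mathcal{S},\,E \subseteq CD} H_{(E)}$.

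\textbf{Proofs of (1) and (3).} For (1), specializing (2) with $D = \k\cdot 1$ gives $CH_{(1)} \subseteq H_{(C)}H_{(1)} \subseteq H_{(C)}$. For the reverse inclusion I would verify that $CH_{(1)}$ is a subcoalgebra containing $C$ together with every simple subcoalgebra directly linked to $C$, so that iterating the link relation yields $H_{(C)} \subseteq CH_{(1)}$; the symmetric argument gives $H_{(C)} = H_{(1)}C$. For (3), specializing (2) further to $C = D = \k\cdot 1$ gives $H_{(1)}H_{(1)} \subseteq H_{(1)}$, so $H_{(1)}$ is a sub-bialgebra of $H$. Since the antipode $S$ is simultaneously an anti-algebra and anti-coalgebra map, it preserves the link relation and fixes $\k\cdot 1$, hence $S(H_{(1)}) = H_{(1)}$, making it a Hopf subalgebra.

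\textbf{Main obstacle.} The essential difficulty is the link-compatibility lemma used in (2). In Montgomery's pointed setting this is almost automatic, because simple subcoalgebras are one-dimensional and linkage of grouplikes is controlled by the group structure on $G(H)$. In the dual Chevalley setting the simples can have higher dimension, so one must genuinely use that $H_0$ is a Hopf subalgebra to propagate wedge-based linkage criteria through coalgebra multiplication; this is where the sufficient conditions for the link relation developed earlier in the paper should do the real work, and it is the step I would expect to require the most care.
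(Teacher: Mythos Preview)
Your strategy for (2) and (3) matches the paper's Proposition~3.13 in spirit: reduce to the link-compatibility statement (Corollary~3.12) and then specialize. But there are two genuine gaps.

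\textbf{First, the field.} The link-compatibility lemma and the inclusion $(H_{(C)}H_{(D)})_0 \subseteq (H_{(C)})_0(H_{(D)})_0$ both rely on algebraically closed $\k$: the former because the matrix techniques of Section~2--3 need every simple coalgebra to admit a basic multiplicative matrix, the latter because it uses $(A\otimes B)_0 = A_0\otimes B_0$. So your argument proves only Proposition~3.13, not Theorem~4.5 over arbitrary $\k$. The paper closes this gap by an entirely different mechanism: it identifies $H$ with the smash coproduct $H_0\blackltimes Q$ where $Q=H/H_0^+H$, shows the link-indecomposable decomposition is $H=\bigoplus_i K_i\blackltimes Q$ with $K_i$ the simple right $K$-module subcoalgebras of $H_0$ (Proposition~4.4, using a base-extension to $\overline{\k}$ to invoke Proposition~3.13), and reads off the components from that.

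\textbf{Second, the reverse inclusion in (1).} Your sketch says that once $CH_{(1)}$ contains every simple linked to $C$, iterating gives $H_{(C)}\subseteq CH_{(1)}$. That does not follow: $H_{(C)}$ is not the sum of those simples but the entire component, and a subcoalgebra sharing the same coradical as $H_{(C)}$ need not contain $H_{(C)}$. What is really needed is $H_0H_{(1)}=H$, and this is exactly what the smash-coproduct description supplies: from $H_{(1)}=K\blackltimes Q$ one gets $H_{(C)}=CK\blackltimes Q=C(K\blackltimes Q)=CH_{(1)}$ directly. In fact the paper's logic runs opposite to yours: it proves (1) first from the smash coproduct, and then (2) is a one-line computation $H_{(C)}H_{(D)}=CH_{(1)}DH_{(1)}=CDH_{(1)}\subseteq\sum_{E\subseteq CD}H_{(E)}$.

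So your outline recovers the algebraically closed case of (2) and (3), but both the passage to arbitrary $\k$ and the equality in (1) require the smash-coproduct machinery of Section~4, which your proposal omits entirely.
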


Here by the dual Chevalley property we mean that the coradical $H_0$ is a Hopf subalgebra. It would here deserve a remark that the dual Chevalley property must be assumed to prove the Items (1) and (2) above, as we will show in Subsection \ref{subsection:D} by an explicit example, $D(2,2,\sqrt{-1})$. In addition, the following proposition (=Proposition \ref{prop:H(1)sub}) which pays an essential role for proving the theorem above is shown under a weaker assumption than the dual Chevalley property (with $\k$ assumed to be algebraically closed).
\begin{proposition}
Let $H$ be a Hopf algebra over an algebraically closed field $\k$ with the bijective antipode $S$. If ${(H_{(1)})_0}^3\subseteq H_0$ holds, then $H_{(1)}$ is a Hopf subalgebra.
\end{proposition}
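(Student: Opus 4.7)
The plan is to verify that $H_{(1)}$, already a subcoalgebra of $H$ containing $1$ by construction, is closed under the antipode $S$ and under multiplication.

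Closure under $S$ is the easier half. Since $S$ is a bijective anti-coalgebra map, a direct link $C \sim D$ between simple subcoalgebras witnessed by an element $u \in (C \wedge D) \setminus (C+D)$ produces the direct link $S(C) \sim S(D)$ witnessed by $S(u) \in (S(D) \wedge S(C)) \setminus (S(C) + S(D))$. Combined with $S(1)=1$, this forces $S((H_{(1)})_0) = (H_{(1)})_0$. Using $S(X \wedge Y) = S(Y) \wedge S(X)$ and the recursive description $(H_{(1)})_n = (H_{(1)})_0 \wedge (H_{(1)})_{n-1}$ of the coradical filtration of $H_{(1)}$, an induction on $n$ then yields $S(H_{(1)}) = H_{(1)}$.

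For closure under multiplication, the standard inclusion $X \cdot (Y \wedge Z) \subseteq (XY) \wedge (XZ)$ for subcoalgebras $X, Y, Z$ of a bialgebra (together with its right-sided version) shows, by induction on the coradical filtration, that once $(H_{(1)})_0 \cdot (H_{(1)})_0 \subseteq (H_{(1)})_0$ is known, $H_{(1)} \cdot H_{(1)} \subseteq H_{(1)}$ follows. Taking $\k 1$ as the third factor in the hypothesis immediately gives $(H_{(1)})_0 \cdot (H_{(1)})_0 \subseteq (H_{(1)})_0^3 \subseteq H_0$, so for simple subcoalgebras $C, D \subseteq (H_{(1)})_0$ the product $CD$ is cosemisimple and decomposes into simple subcoalgebras of $H$. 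It remains to show every simple summand $E \subseteq CD$ is link-equivalent to $\k 1$.

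This last step is the principal obstacle. I would adapt Montgomery's chain argument from the pointed case: choose a chain of direct links $\k 1 = C_0, C_1, \ldots, C_r = C$ with witnesses $x_i \in (C_i \wedge C_{i+1}) \setminus (C_i + C_{i+1})$, and for each $d \in D$ observe that $x_i d \in (C_i D) \wedge (C_{i+1} D)$ via the wedge inclusion above. The hypothesis $(H_{(1)})_0^3 \subseteq H_0$ ensures each intermediate product $C_i D$ lies in $H_0$ and is therefore cosemisimple, which is precisely what allows a matching argument --- using algebraic closure of $\k$ and the resulting matrix-coalgebra description of simples --- to refine the coarse wedge inclusion into direct links between individual simple summands of $C_i D$ and $C_{i+1} D$. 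Iterating the chain traces every simple $E \subseteq C_r D = CD$ back to a simple summand of $C_0 D = D \subseteq (H_{(1)})_0$, yielding $E \sim \k 1$. The delicate point is the matching itself: ensuring that the elements $x_i d$ actually lie outside $C_i D + C_{i+1} D$ for enough choices of $d$, and that the induced links at the level of coalgebra-sums refine down to genuine direct links between individual simple subcoalgebras, is where both the full strength of the hypothesis $(H_{(1)})_0^3 \subseteq H_0$ and the algebraic closure assumption become essential.
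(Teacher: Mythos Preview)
Your overall strategy coincides with the paper's: reduce to showing that for simple $C,D\subseteq (H_{(1)})_0$ every simple summand of $CD$ is linked to $\k 1$, then run a chain argument along direct links $\k1=C_0,C_1,\ldots,C_r=C$, propagating links from $C_iD$ to $C_{i+1}D$. The reduction via $X(Y\wedge Z)\subseteq (XY)\wedge(XZ)$ and the $S$-closure are fine.

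The gap is precisely the step you flag as ``delicate'': you have not supplied any mechanism guaranteeing that multiplication by $D$ preserves non-triviality. Working with a single witness $x_id$ will not do---such an element can easily land in $H_0$ even when $x_i\notin H_0$---and ``enough choices of $d$'' is not an argument. The paper's resolution (its Lemma~3.10) is a matrix argument: encode the direct link $C_i\sim C_{i+1}$ by a non-trivial $(\C_i,\C_{i+1})$-primitive matrix $\X$, and form the Kronecker product $\X\odot\D$. The cubic hypothesis enters exactly here: since $S(D),S^{-1}(D)\subseteq (H_{(1)})_0$ (by $S$-closure), the inclusion $(C_iD+C_{i+1}D)(S(D)+S^{-1}(D))\subseteq (H_{(1)})_0^3\subseteq H_0$ lets one right-multiply any putatively trivial row or column of $\X\odot\D$ by $I\odot S(\D)$ (respectively $(I\odot S^{-1}(\D))^{\mathrm T}$) to cancel $\D$ and produce a trivial row or column of $\X\odot I$, contradicting the non-triviality of $\X$. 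Thus every row and every column of $\X\odot\D$ is non-trivial modulo $H_0$; after triangularizing $\C_i\odot\D$ and $\C_{i+1}\odot\D$ into basic blocks, this forces each simple summand on one side to be linked (via Proposition~3.7(2)) to some simple summand on the other side, and vice versa---exactly the two-sided matching you need. Without this cancellation-by-antipode device, your sketch does not close.
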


These results for a Hopf algebra $H$ with the dual Chevalley property are proved mainly by two steps. Firstly, we aim to obtain Proposition \ref{thm:DCP}, providing that if the base field $\k$ is algebraically closed, then the claim of Theorem \ref{thm.1}(2) holds. This is shown by a method of non-trivial primitive matrices, which are non-pointed analogues of non-trivial primitive elements. Specifically, some sufficient conditions for simple subcoalgebras to be linked are described by non-trivial matrices, which would help us to study the link relation and link-indecomposable components by straightforward computations on matrices. Then the desired results are possible to be obtained.

As the second step we present the link-indecomposable decomposition of $H$ by applying its description as smash coproduct. In fact, the quotient right $H$-module coalgebra $Q:=H/{H_0}^+H$ of $H$ is made into a right $H_0$-comodule coalgebra, so that $H$ is identified with the associated smash coproduct $H_0\blackltimes Q$, which has $H_0\otimes Q$ as the underlying left $H_0$-module. Let $K$ be the smallest Hopf subalgebra of $H_0$ that coacts on $Q$. The link-indecomposable decomposition is presented as
$$H=\bigoplus_i K_i\otimes Q,$$
where $K_i$ are the simple subobjects of $H_0$ in the semisimple category ${}^{H_0}\!\mathcal{M}^{H_0}_K$ of Hopf bicomodules. This presentation essentially coincides with Montgomery's, when $H$ is pointed, in particular; see the paragraph following Corollary \ref{cor:Hdecomp}.

Moreover as a direct corollary, the faithful flatness of $H$ over the Hopf subalgebra $H_{(1)}$ is followed when $H$ has the dual Chevalley property, which also implies that $H$ is a projective generator as an $H_{(1)}$-module. This appears as Corollary \ref{cor:ffness}(1) in the paper:
\begin{corollary}
Let $H$ be a Hopf algebra with the dual Chevalley property. Then $H$ is a projective generator of left as well as right $H_{(1)}$-modules.
\end{corollary}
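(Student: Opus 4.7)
The plan is to deduce the projective-generator property from faithful flatness of $H$ over $H_{(1)}$, with the latter obtained as a direct consequence of the smash-coproduct presentation constructed earlier in the paper.

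First, I would show that $H$ is a faithfully flat left and right $H_{(1)}$-module. Using the link-indecomposable decomposition $H = \bigoplus_i K_i \otimes Q$ arising from the identification $H \cong H_0 \blackltimes Q$, together with $H_{(1)} = K \otimes Q$ (where $K \subseteq H_0$ is the smallest Hopf subalgebra coacting on $Q$), the inclusion $H_{(1)} \hookrightarrow H$ corresponds, after the smash-coproduct identification, to tensoring the Hopf-subalgebra inclusion $K \hookrightarrow H_0$ with the $Q$-factor. Since $H_0$ is cosemisimple by the dual Chevalley hypothesis, the extension $K \subseteq H_0$ is faithfully flat: one reduces to finite-dimensional Hopf subalgebras of $H_0$ generated by finitely many simple subcoalgebras and invokes Nichols--Zoeller to obtain freeness on each finite-dimensional piece, then passes to the filtered colimit. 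Tensoring with $Q$ on the smash-coproduct side preserves faithful flatness, yielding the claim on the left; the right-hand case is symmetric.

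Second, the upgrade from faithful flatness to projective generator is a standard principle for Hopf-subalgebra extensions (Schneider, Masuoka): if $H$ is faithfully flat over a Hopf subalgebra $H_{(1)}$, then the extension is Hopf--Galois with respect to the quotient $H/{H_{(1)}}^+ H$, and the fundamental theorem of Hopf modules forces $H$ to be a projective generator as an $H_{(1)}$-module on either side. The generator condition is moreover manifest from the link decomposition itself: $H_{(1)}$ is a direct summand of $H$ as an $H_{(1)}$-bimodule, being one of the link-indecomposable components.

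The principal obstacle lies in the first stage: one must verify that the left $H_{(1)}$-action on $H$ given by the smash-coproduct multiplication is compatible with the tensor decomposition $H \cong H_0 \otimes_K H_{(1)}$ at the level of $H_{(1)}$-modules, since the smash-coproduct multiplication \emph{a priori} mixes the $K_i \otimes Q$ summands in a nontrivial way. Once this module identification is in hand, faithful flatness reduces to the extension $K \subseteq H_0$ of cosemisimple Hopf algebras, and the projective-generator upgrade becomes a standard citation.
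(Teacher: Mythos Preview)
Your proposal has a genuine gap at precisely the point you flag as the ``principal obstacle.'' The isomorphism $H\xrightarrow{\simeq} H_0\blackltimes Q$ from \cite{Mas03} is only an isomorphism of left $H_0$-module \emph{coalgebras}; the Hopf algebra multiplication of $H$ is not transported to any tensor-product-type multiplication on $H_0\otimes Q$. In particular, the left (or right) $H_{(1)}$-module structure on $H$---which is what faithful flatness concerns---does \emph{not} arise by ``tensoring the inclusion $K\hookrightarrow H_0$ with the $Q$-factor.'' There is no reason for $H$ to be isomorphic to $H_0\otimes_K H_{(1)}$ as an $H_{(1)}$-module, and your reduction to faithful flatness of $H_0$ over $K$ does not go through without this. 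The smash coproduct carries only a left $H_0$-action, not the algebra structure of $H$.

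The paper circumvents this entirely. Writing $H=H_{(1)}\oplus M$ with $M$ the direct sum of the remaining link-indecomposable components, Theorem~\ref{thm:DCPnew}(2) gives $H_{(1)}M\subseteq M$ and $MH_{(1)}\subseteq M$, so the splitting is one of $H_{(1)}$-bimodules. Faithful flatness then follows from Chirvasitu's criterion \cite[Propositions 1.4 and 1.6]{Chi14}, which takes exactly such a bimodule complement as input. The upgrade to projective generator is via \cite[Corollary 2.9]{MW94}, essentially the Masuoka--Wigner result you allude to. Thus the only input actually needed from the smash-coproduct analysis is Theorem~\ref{thm:DCPnew} itself; once that product formula is in hand, the argument is a two-line citation and never needs to compare module structures across the identification $H\cong H_0\blackltimes Q$.
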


However, $H$ is not always free as a left or right $H_{(1)}$-module, which is shown with an example in Subsection \ref{subsection:HoverH(1)}. Meanwhile, even though there are examples, such as $T_\infty(2,1,-1)^\bullet$ provided in Subsection \ref{subsection:T}, with the normal link-indecomposable component as a Hopf subalgebra, we do not know when $H_{(1)}$ becomes a normal Hopf subalgebra for a non-pointed Hopf algebra $H$.

The organization of this paper is as follows: In Section \ref{section2}, necessary matric techniques including certain properties of multiplicative and primitive matrices are provided. In Section \ref{section3}, we recall and promote the notions related to the link relations, and show the formula on the products of components of $H$ over an algebraically closed field, with the usage of matric conditions established. Our final theorem would be proved by the constructions of smash coproducts in Section \ref{section4}, where properties of the Hopf subalgebra $H_{(1)}$ is also discussed. At last, by a method to determine the link-decompositions, some examples and applications are given in Section \ref{section5}.

\section{Matrices over Coalgebras}\label{section2}

Through out this paper, all vector spaces, coalgebras, bialgebras and Hopf algebras are assumed to be over a field $\k$. The tensor product over $\k$ is denoted simply by $\otimes$. As the main tools in this paper are matrices over vector spaces, an evident lemma should be noted as first:

\begin{lemma}
Let $V$ be a vector space. For any matrix $\A$ over $V$, the followings are equivalent:
\begin{itemize}
  \item[(1)] All the entries of $\A$ are linearly independent;
  \item[(2)] All the entries of $P\A Q$ are linearly independent, for some invertible matrices $P$ and $Q$ over $\k$.
\end{itemize}
\end{lemma}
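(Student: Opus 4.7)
The lemma is a routine linear-algebra observation, and the plan is simply to exploit the invertibility of both $P$ and $Q$ to transport dependence relations back and forth between the entries of $\A$ and those of $P\A Q$.

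The direction (1)$\Rightarrow$(2) is immediate by taking $P$ and $Q$ to be identity matrices of the appropriate sizes. For (2)$\Rightarrow$(1), I would argue as follows. Suppose $\A$ is $m\times n$ with entries $a_{ij}$, and assume the entries $b_{kl}$ of $P\A Q$ are linearly independent over $\k$ for some invertible $P\in\M_m(\k)$ and $Q\in\M_n(\k)$. Starting from an arbitrary dependence relation $\sum_{i,j}c_{ij}a_{ij}=0$, I would use $\A=P^{-1}(P\A Q)Q^{-1}$ to rewrite each $a_{ij}$ as a $\k$-linear combination of the $b_{kl}$, and then read off the resulting relation among the $b_{kl}$; its coefficient matrix is $(P^{-1})^{\!\top}(c_{ij})(Q^{-1})^{\!\top}$, so hypothesis (2) forces this matrix, hence $(c_{ij})$ itself, to vanish.

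A more conceptual packaging of the same argument is to observe that the $\k$-linear spans of the entries of $\A$ and of $P\A Q$ inside $V$ coincide, since each family lies in the $\k$-span of the other via the invertibility above. Both matrices have exactly $mn$ entries, so linear independence for one family amounts to the common span having dimension $mn$, which in turn amounts to linear independence for the other. I do not foresee any real obstacle: the argument is purely formal and uses only the invertibility of $P$ and $Q$ together with basic linear algebra; the result can then be applied freely in later sections when normalising multiplicative or primitive matrices by change-of-basis matrices.
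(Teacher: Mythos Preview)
Your argument is correct and entirely standard; the paper in fact gives no proof of this lemma, labeling it as ``evident'' and moving on. Your write-up supplies exactly the routine verification the paper omits, so there is nothing to compare.
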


Moreover, we always say that two square matrices $\A$ and $\B$ over a vector space $V$ are \textit{similar}, if there exists an invertible matrix $L$ over $\k$ such that $\B=L\A L^{-1}$. This is denoted by $\A\sim\B$ for simplicity.

\subsection{Multiplicative Matrices and Their Operations}

The notion of the multiplicative matrices over coalgebras was once introduced in \cite{Man88}. This helps us generalize some results of pointed coalgebras or Hopf algebras to the case of non-pointed ones.
For our purposes, more properties of multiplicative matrices are considered in this subsection. Let us start by recalling notations and definitions.

\begin{notation}
Let $V$ and $W$ be vector spaces.
\begin{itemize}
\item[(1)] For any matrix $\A:=(v_{ij})_{m\times n}$ over $V$ and matrix $\B:=(w_{ij})_{n\times l}$ over $W$, denote the following matrix
    $$\A\;\widetilde{\otimes}\;\B:=\left(\sum\limits_{k=1}^n v_{ik}\otimes w_{kl}\right)_{m\times l};$$
\item[(2)] For any matrix $\A:=(v_{ij})_{m\times n}$ over $V$, denote the following matrix
    $$\A^\mathrm{T}:=(v_{ji})_{n\times m};$$
\item[(3)] For any linear map $f:V\rightarrow W$ and a matrix $\A:=(v_{ij})_{m\times n}$ over $V$, denote the following matrix
    $$f(\A):=\left(f(v_{ij})\right)_{m\times n}.$$
\end{itemize}
\end{notation}

Then multiplicative matrices could be defined simply as follows.

\begin{definition}
Let $(H,\Delta,\varepsilon)$ be a coalgebra over $\k$.
\begin{itemize}
  \item[(1)] A square matrix $\G$ over $H$ is said to be multiplicative, if $\Delta(\G)=\G\;\widetilde{\otimes}\;\G$ and $\varepsilon(\G)=I$ (the identity matrix over $\k$) both hold;
  \item[(2)] A multiplicative matrix $\C$ is said to be basic, if its entries are linearly independent.
\end{itemize}
\end{definition}

Clearly, all the entries of a basic multiplicative matrix $\C$ span a simple subcoalgebra $C$ of $H$. Conversely, when the base field $\k$ is \textit{algebraically closed}, any simple coalgebra $C$ has a basic multiplicative matrix $\C$ whose entries span $C$. Moreover, we could describe the uniqueness for $\C$ as follows:

\begin{lemma}
Let $C$ be a simple coalgebra over $\k$. Suppose that $\mathcal{C}$ is a basic multiplicative matrix of $C$. Then $\D$ is also a basic multiplicative matrix of $C$ if and only if $\D\sim\C$.
\end{lemma}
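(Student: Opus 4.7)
The plan is to prove the two implications separately. The ``if'' direction is a direct verification that basicness and multiplicativity are preserved under scalar conjugation, while the ``only if'' direction reduces to the classical uniqueness of matrix unit systems in a matrix algebra.

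For the easier ``if'' direction, assume $\D=L\C L^{-1}$ for some invertible scalar matrix $L$. I would first apply $\varepsilon$ entrywise, obtaining $\varepsilon(\D)=L\varepsilon(\C)L^{-1}=I$. Next, applying $\Delta$ entrywise and using that the entries of $L$ and $L^{-1}$ are scalars, I get $\Delta(\D)=L\Delta(\C)L^{-1}=L(\C\;\widetilde{\otimes}\;\C)L^{-1}$; a short index calculation using $L^{-1}L=I$ shows that this equals $(L\C L^{-1})\;\widetilde{\otimes}\;(L\C L^{-1})=\D\;\widetilde{\otimes}\;\D$. Linear independence of the entries of $\D$ then follows at once from the preceding lemma, applied with $P=L$ and $Q=L^{-1}$.

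For the ``only if'' direction, I would first note that $\C$ and $\D$ must have equal size $n$, since both sets of entries form $\k$-bases of $C$ and $n^2=\dim_\k C$ forces this. Expanding $d_{ij}$ uniquely in the basis $\{c_{pq}\}$ gives scalars $M^{pq}_{ij}\in\k$ with $d_{ij}=\sum_{p,q}M^{pq}_{ij}c_{pq}$, and I would repackage these as $n\times n$ scalar matrices $E^{pq}$ defined by $(E^{pq})_{ij}=M^{pq}_{ij}$. Comparing coefficients of the linearly independent set $\{c_{ab}\otimes c_{cd}\}$ on the two sides of $\Delta(\D)=\D\;\widetilde{\otimes}\;\D$ should yield the matrix unit relations $E^{pq}E^{rs}=\delta_{qr}E^{ps}$, while $\varepsilon(\D)=I$ combined with $\varepsilon(c_{pq})=\delta_{pq}$ gives $\sum_p E^{pp}=I$.

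At this point, the existence of the basic multiplicative matrix $\C$ already forces, via the dual basis in $C^*$, that $C^*\cong\M_n(\k)$ as $\k$-algebras, so $\{E^{pq}\}$ is a full system of matrix units in $\M_n(\k)$. By the classical fact that any two such systems in $\M_n(\k)$ are conjugate by an invertible element---a consequence of Skolem--Noether, or derivable elementarily by setting $v_p:=E^{p1}v_1$ for a nonzero $v_1$ in the range of the rank-one idempotent $E^{11}$---there is an invertible $L\in\M_n(\k)$ with $E^{pq}=L^{-1}\tilde E^{pq}L$, where $\tilde E^{pq}$ are the standard matrix units of $\M_n(\k)$. Decoding this yields $M^{pq}_{ij}=L^{-1}_{ip}L_{qj}$, whence $\D=L^{-1}\C L$ and thus $\D\sim\C$. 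The main obstacle I anticipate is the index bookkeeping in the translation between the multiplicative condition on $\D$ and the matrix unit relations on the $E^{pq}$: each equation carries four free indices, and some care is required to avoid transposition or index-reversal errors before invoking the structural fact about matrix unit systems.
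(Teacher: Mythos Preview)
Your proposal is correct and follows essentially the same route as the paper. The paper's proof is a one-line appeal to Skolem--Noether (``any two matric bases of a finite-dimensional matrix algebra are similar; our lemma is the dual version''), and your argument is precisely the unpacking of that sentence: you pass from the coalgebra side to the algebra side by reading the transition coefficients $M^{pq}_{ij}$ as a system of matrix units $E^{pq}$ in $\M_n(\k)$, then invoke conjugacy of matrix unit systems. One small point worth making explicit is why the $E^{pq}$ are \emph{nonzero} (equivalently, linearly independent): this follows because the $n^2\times n^2$ transition matrix between the bases $\{c_{pq}\}$ and $\{d_{ij}\}$ is invertible, so vectorizing the $E^{pq}$ gives a basis of $\k^{n^2}$; without this, the relations $E^{pq}E^{rs}=\delta_{qr}E^{ps}$ and $\sum_p E^{pp}=I$ alone do not rule out a degenerate system.
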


\begin{proof}
A particular case of Skolem-Noether theorem follows the fact that: Any two matric bases of a finite-dimensional matrix algebra are similar. Our desired lemma would be its dual version.
\end{proof}

\begin{remark}
One could easily verify that matrices similar to multiplicative ones are also multiplicative, even if they are not basic.
\end{remark}

This lemma states that for a simple coalgebra $C$, its basic multiplicative matrix would be unique up to the similarity relation (over $\k$). In fact as for an arbitrary multiplicative matrix, we claim in the followings that it could be ``decomposed'' into basic ones:

\begin{proposition}\label{prop:sim}
Suppose $\G$ is an $n\times n$ multiplicative matrix over a coalgebra $H$. Then
\begin{itemize}
  \item[(1)] There exist basic multiplicative matrices $\C_1,\C_2,\cdots,\C_t$ over $H$, such that
    $$\G\sim\left(\begin{array}{cccc}
      \C_1 & \X_{12} & \cdots & \X_{1t}  \\
      0 & \C_2 & \cdots & \X_{2t}  \\
      \vdots & \vdots & \ddots & \vdots  \\
      0 & 0 & \cdots & \C_t
    \end{array}\right),$$
    where $\X_{ij}$'s are matrices over $H$ for all $1\leq i<j\leq t$;
  \item[(2)] If all the entries of $\G$ belong to the coradical of $H$, then there exist basic multiplicative matrices $\C_1,\C_2,\cdots,\C_t$ over $H$, such that
      $$\G\sim\left(\begin{array}{cccc}
      \C_1 & 0 & \cdots & 0  \\
      0 & \C_2 & \cdots & 0  \\
      \vdots & \vdots & \ddots & \vdots  \\
      0 & 0 & \cdots & \C_t
    \end{array}\right).$$
\end{itemize}
\end{proposition}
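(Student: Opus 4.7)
The plan is to recast $\G$ as the matrix of a right $H$-comodule structure on $V := \k^n$, by setting $\rho(e_j) := \sum_i e_i \otimes g_{ij}$ with $\G = (g_{ij})$. The conditions $\Delta(\G) = \G \;\widetilde{\otimes}\; \G$ and $\varepsilon(\G) = I$ are exactly the coassociativity and counit axioms for $\rho$, and a change of basis by an invertible matrix $L$ over $\k$ corresponds exactly to replacing $\G$ by the similar matrix $L \G L^{-1}$. Under this dictionary the proposition becomes a statement about how to choose a basis of $V$ so that the coaction matrix assumes the desired block form.

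For (1), since $V$ is finite-dimensional I would take a composition series of right $H$-subcomodules
\[
0 = V_0 \subsetneq V_1 \subsetneq \cdots \subsetneq V_t = V
\]
whose quotients $W_i := V_i/V_{i-1}$ are simple $H$-comodules, and form a basis of $V$ by concatenating bases of the $W_i$ lifted successively into $V_i$. The resulting change-of-basis matrix $L$ makes $L \G L^{-1}$ block upper triangular, with diagonal blocks $\C_i$ equal to the coaction matrices of the $W_i$. Comparing entries in $\Delta(L \G L^{-1}) = (L \G L^{-1}) \;\widetilde{\otimes}\; (L \G L^{-1})$ along the diagonal blocks (where the zero lower blocks kill all cross terms) shows that each $\C_i$ satisfies $\Delta(\C_i) = \C_i \;\widetilde{\otimes}\; \C_i$ and $\varepsilon(\C_i) = I$, so each $\C_i$ is multiplicative. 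Basicness of $\C_i$ — linear independence of its entries — rests on the preceding observation that, in the setting of this paper (e.g.\ over an algebraically closed base field), the matrix coefficients of a simple comodule with respect to any basis are linearly independent because the associated simple subcoalgebra is a matrix coalgebra of the correct size.

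For (2), the extra hypothesis that every entry of $\G$ lies in the coradical $H_0$ means the subcoalgebra generated by the entries is contained in $H_0$, hence is cosemisimple. Therefore $V$, viewed as a comodule over this cosemisimple coalgebra, is itself semisimple and splits as a direct sum $V = W_1 \oplus \cdots \oplus W_t$ of simple subcomodules; a basis of $V$ adapted to this direct sum gives an $L$ for which $L \G L^{-1}$ is block diagonal, with diagonal blocks $\C_i$ that are basic multiplicative by the argument of (1). The main obstacle I anticipate is precisely the basicness step in (1): identifying the correct subquotients and verifying that their matrix coefficients are linearly independent; once this is in hand, (2) follows as a clean upgrade of (1) using cosemisimplicity of the coradical.
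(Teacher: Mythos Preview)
Your proposal is correct and follows essentially the same approach as the paper: both interpret $\G$ as the coefficient matrix of a right $H$-comodule structure on $\k^n$, use a composition series (the paper builds it inductively by peeling off one simple subcomodule at a time) to obtain the block upper-triangular form in (1), invoke the linear independence of matrix coefficients of a simple comodule for basicness (the paper cites \cite[Theorem 3.2.11(d)]{Rad12}), and use complete reducibility over the cosemisimple coradical for the block-diagonal form in (2).
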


\begin{proof}
It is clear that all the entries of $\G$ span a subcoalgebra $G$ of $H$. Define an $n$-dimensional $\k$-vector space $V:=\k v_1\oplus\k v_2\oplus\cdots\oplus\k v_n$, which becomes a right $G$-comodule with structures
$$\rho(v_1,v_2,\cdots,v_n):=(v_1,v_2,\cdots,v_n)\;\widetilde{\otimes}\;\G.$$
\begin{itemize}
\item[(1)]
Evidently, $V$ has at least one simple $G$-subcomodule, denoted by $W$. Suppose that $W$ has a linear basis $\{w_1,w_2,\cdots,w_r\}$, and
$$\rho(w_1,w_2,\cdots,w_r)=(w_1,w_2,\cdots,w_r)\;\widetilde{\otimes}\;
  \left(\begin{array}{cccc}
      c_{11} & c_{12} & \cdots & c_{1r}  \\  c_{21} & c_{22} & \cdots & c_{2r}  \\
      \vdots & \vdots & \ddots & \vdots  \\  c_{r1} & c_{r2} & \cdots & c_{rr}
  \end{array}\right)$$
holds for some $c_{ij}$'s in $G$. Then according to \cite[Theorem 3.2.11(d)]{Rad12} and its proof, $\{c_{ij}\mid 1\leq i,j\leq r\}$ is linearly independent, and thus spans a simple subcoalgebra with a basic multiplicative matrix $\C_1:=(c_{ij})_{r\times r}$.

Now we suppose $\{w_1,w_2,\cdots,w_r,u_1,u_2\cdots,u_{n-r}\}$ is another linear basis of $V$, which is extended from the basis of $W$ mentioned above. Choose the $n\times n$ transition matrix $L_1$ over $\k$ such that
$$(v_1,v_2,\cdots,v_n)=(w_1,\cdots,w_r,u_1,\cdots,u_{n-r})L_1,$$
and consider the comodule structure $\rho$ at this equation. We could compute to know that
\begin{eqnarray*}
(w_1,\cdots,w_r,u_1,\cdots,u_{n-r})\;\widetilde{\otimes}\;L_1\G
&=&  (w_1,\cdots,w_r,u_1,\cdots,u_{n-r})L_1\;\widetilde{\otimes}\;\G  \\
&=& (v_1,v_2,\cdots,v_n)\;\widetilde{\otimes}\;\G
\;=\; \rho(v_1,v_2,\cdots,v_n)  \\
&=& \rho((w_1,\cdots,w_r,u_1,\cdots,u_{n-r})L_1)  \\
&=& \rho(w_1,\cdots,w_r,u_1,\cdots,u_{n-r})L_1  \\
&=& (w_1,\cdots,w_r,u_1,\cdots,u_{n-r})\;\widetilde{\otimes}
  \left(\begin{array}{cc}  \C_1 & \X_1  \\  0 & \G_1  \end{array}\right)L_1,
\end{eqnarray*}
where $\G_1$ is multiplicative (of size $n-r$) due to the axiom of comodules, and $\X_{1}$ is an $r\times(n-r)$ matrix over $H$. This follows that
$$L_1\G L_1^{-1}=\left(\begin{array}{cc}  \C_1 & \X_1  \\  0 & \G_1  \end{array}\right).$$

If we repeat the process on $\G_1$ for several times, an invertible matrix $L$ over $\k$ could be obtained, such that
$$L\G L^{-1}=\left(\begin{array}{cccc}
      \C_1 & \X_{12} & \cdots & \X_{1t}  \\
      0 & \C_2 & \cdots & \X_{2t}  \\
      \vdots & \vdots & \ddots & \vdots  \\
      0 & 0 & \cdots & \C_t
    \end{array}\right)$$
holds for some basic multiplicative matrices $\C_1,\C_2,\cdots,\C_t$ over $G$.

\item[(2)]
The reason is similar to (1) but noting that $G$ is cosemisimple, which follows that $V$ is a completely irreducible $G$-comodule. In other words, there are simple $G$-comodules $W_1,W_2,\cdots,W_t$ of $V$, such that
$$V=W_1\oplus W_2\oplus\cdots\oplus W_t$$
holds. If we choose linear bases for $W_1,W_2,\cdots,W_t$ respectively, then simple subcoalgebras with basic multiplicative matrices $\C_1,\C_2,\cdots,\C_t$ are obtained as before. The transition matrix $L$ on $V$ from $\{v_1,v_2,\cdots,v_n\}$ to the union of those bases chosen above for $W_1,W_2,\cdots,W_t$ would satisfy the property that
$$L\G L^{-1}=\left(\begin{array}{cccc}
      \C_1 & 0 & \cdots & 0  \\  0 & \C_2 & \cdots & 0  \\
      \vdots & \vdots & \ddots & \vdots  \\  0 & 0 & \cdots & \C_t
    \end{array}\right).$$
\end{itemize}
\end{proof}

Now we turn to mention a binary operations on multiplicative matrices:

\begin{lemma}\label{lem:Kprod}
Suppose $\A=(a_{ij})_{r\times r}$ and $\B=(b_{ij})_{s\times s}$ be multiplicative matrices over a coalgebra $H$. Then
\begin{itemize}
  \item[(1)] The following $rs\times rs$ (block) matrix is multiplicative over the coalgebra $H\otimes H$:
    $$\G:=\left(\begin{array}{ccc}
      a_{11}\otimes\B & \cdots & a_{1r}\otimes\B  \\
      \vdots & \ddots & \vdots  \\
      a_{r1}\otimes\B & \cdots & a_{rr}\otimes\B  \\
    \end{array}\right),
    \;\;\text{where}\;\;
    a_{ij}\otimes\B:=\left(\begin{array}{ccc}
      a_{ij}\otimes b_{11} & \cdots & a_{ij}\otimes b_{1s}  \\
      \vdots & \ddots & \vdots  \\
      a_{ij}\otimes b_{s1} & \cdots & a_{ij}\otimes b_{ss}  \\
    \end{array}\right);$$
  \item[(2)] If $H$ is moreover a bialgebra, then the following $rs\times rs$ matrices are both multiplicative over $H$:
    $$\A\odot\B:=\left(\begin{array}{ccc}
      a_{11}\B & \cdots & a_{1r}\B  \\
      \vdots & \ddots & \vdots  \\
      a_{r1}\B & \cdots & a_{rr}\B  \\
    \end{array}\right)
    \;\;\text{and}\;\;
    \A\odot'\B:=\left(\begin{array}{ccc}
      \A b_{11} & \cdots & \A b_{1s}  \\
      \vdots & \ddots & \vdots  \\
      \A b_{s1} & \cdots & \A b_{ss}  \\
    \end{array}\right).$$
\end{itemize}
\end{lemma}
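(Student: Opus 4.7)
For part (1), I will verify directly that the block matrix $\G$ satisfies both axioms of multiplicativity with respect to the standard coalgebra structure on $H\otimes H$ given by $\Delta_{H\otimes H}(h\otimes k)=\sum h_1\otimes k_1\otimes h_2\otimes k_2$ and $\varepsilon_{H\otimes H}=\varepsilon\otimes\varepsilon$. Index the entries of $\G$ by pairs so that $\G_{(i,k),(j,l)}=a_{ij}\otimes b_{kl}$. The counit condition is immediate: $\varepsilon_{H\otimes H}(a_{ij}\otimes b_{kl})=\varepsilon(a_{ij})\varepsilon(b_{kl})=\delta_{ij}\delta_{kl}$, which is exactly the $((i,k),(j,l))$-entry of the identity matrix. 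For the comultiplication, using the multiplicativity of $\A$ and $\B$ separately, one gets
\[
\Delta_{H\otimes H}(a_{ij}\otimes b_{kl})=\sum_{m,n}(a_{im}\otimes b_{kn})\otimes(a_{mj}\otimes b_{nl}),
\]
which is precisely the $((i,k),(j,l))$-entry of $\G\;\widetilde{\otimes}\;\G$ once we let the composite index $(m,n)$ serve as the summation index in the $\widetilde{\otimes}$ product.

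For part (2), I will use the bialgebra axiom, which says the multiplication $\mu\colon H\otimes H\to H$ is a coalgebra homomorphism. A general and easy observation (following from the functoriality of $\Delta$ and $\varepsilon$ with respect to coalgebra maps) is that if $f\colon H_1\to H_2$ is a coalgebra map and $\mathcal{G}$ is a multiplicative matrix over $H_1$, then $f(\mathcal{G})$ is a multiplicative matrix over $H_2$. Applying this to the coalgebra map $\mu$ and the matrix $\G$ from part (1), we obtain $\mu(\G)=\A\odot\B$, which is therefore multiplicative over $H$.

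For $\A\odot'\B$, I will note that its $((k,i),(l,j))$-entry equals $a_{ij}b_{kl}$, which is exactly the $((i,k),(j,l))$-entry of $\A\odot\B$. Hence $\A\odot'\B$ is obtained from $\A\odot\B$ by conjugation with the permutation matrix swapping the orderings $(i,k)\leftrightarrow(k,i)$. Since similarity over $\k$ preserves multiplicativity, as noted in the remark following the uniqueness lemma, $\A\odot'\B$ is multiplicative as well. The only real obstacle in the argument is index bookkeeping; the content itself reduces to the two facts that the tensor product of coalgebras is a coalgebra and that the multiplication of a bialgebra is a coalgebra map.
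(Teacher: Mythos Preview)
Your proof is correct and, for part (1) and for $\A\odot\B$ in part (2), follows the paper's argument essentially verbatim: direct index verification that $\Delta_{H\otimes H}(a_{ij}\otimes b_{kl})$ is the $((i,k),(j,l))$-entry of $\G\;\widetilde{\otimes}\;\G$, the immediate counit check, and then the observation that $\A\odot\B=\mu(\G)$ for the coalgebra map $\mu:H\otimes H\to H$.

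The one genuine difference is your treatment of $\A\odot'\B$. You note that $(\A\odot'\B)_{(k,i),(l,j)}=a_{ij}b_{kl}=(\A\odot\B)_{(i,k),(j,l)}$, so $\A\odot'\B$ is conjugate to $\A\odot\B$ by the permutation matrix interchanging the orderings $(i,k)\leftrightarrow(k,i)$, and then invoke the remark that similarity over $\k$ preserves multiplicativity. The paper instead passes to the opposite bialgebra $H^{\mathrm{op}}$ and identifies $\A\odot'\B$ with the Kronecker product $\B\odot\A$ computed in $H^{\mathrm{op}}$, reducing to the case already handled. Both arguments are one line; yours stays inside $H$ and is perhaps more elementary, while the paper's is slightly more structural in that it reuses the $\odot$ result wholesale rather than appealing to a similarity.
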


\begin{remark}
The matrix $\A\odot\B$ is supposed to be called the Kronecker product of $\A$ and $\B$. Clearly, the binary operation $\odot$ could be defined on arbitrary matrices over an algebra in the same ways.
\end{remark}

\begin{proof}
\begin{itemize}
\item[(1)]
Consider the entry $a_{ij}\otimes b_{kl}$ in the block $a_{ij}\otimes\B$. It is direct that
$$\Delta(a_{ij}\otimes b_{kl})=\sum\limits_{r'=1}^r\sum\limits_{s'=1}^s
(a_{ir'}\otimes b_{ks'})\otimes(a_{r'j}\otimes b_{s'l}).$$
Then we compute the entry in $\mathcal{G}\;\widetilde{\otimes}\;\mathcal{G}$ with the same position with $a_{ij}\otimes b_{kl}$ in $\mathcal{G}$. This entry is
\begin{eqnarray*}
&& \sum\limits_{s'=1}^s(a_{i1}\otimes b_{ks'})\otimes(a_{1j}\otimes b_{s'l})
+\sum\limits_{s'=1}^s(a_{i2}\otimes b_{ks'})\otimes(a_{2j}\otimes b_{s'l})  \\
&& +\cdots+\sum\limits_{s'=1}^s(a_{ir}\otimes b_{ks'})\otimes(a_{rj}\otimes b_{s'l})  \\
&=& \sum\limits_{r'=1}^r\sum\limits_{s'=1}^s(a_{ir'}\otimes b_{ks'})\otimes(a_{r'j}\otimes b_{s'l}).
\end{eqnarray*}
In conclusion, $\Delta(\G)=\G\;\widetilde{\otimes}\;\G$. Another requirement $\varepsilon(\G)=I_{rs}$ is evident, since $\varepsilon(a_{ij}\otimes b_{kl})=\delta_{ij}\delta_{kl}$.
\item[(2)]
Note that the multiplication $m:H\otimes H\rightarrow H$ is a coalgebra map. Thus  $\A\odot\B=m(\G)$ is multiplicative.

On the other hand, we consider the bialgebra $H^\mathrm{op}$, whose multiplication is opposite to $H$. It could be seen that $\A$ and $\B$ are still multiplicative over $H^\mathrm{op}$, since $H$ and $H^\mathrm{op}$ share the same coalgebra structures. Therefore, $\A\odot'\B$ is the Kronecker product of $\B$ and $\A$ in $H^\mathrm{op}$ and thus multiplicative.
\end{itemize}
\end{proof}

In the end of this subsection, some evident formulas on such Kronecker products should be noted for later computations:

\begin{lemma}\label{lem:matrixproduct}
Let $H$ be an algebra. Denote the identity matrix of size $n$ by $I_n$.
\begin{itemize}
  \item[(1)] Suppose that $\A_{m_1\times n_1}$ and $\B_{m_2\times n_2}$ are matrices over $H$. Then
      $$(\A\odot\B)^\mathrm{T}=\A^\mathrm{T}\odot\B^\mathrm{T};$$
  \item[(2)] Suppose that $\A_{m_1\times n_1}$, $\B_{m_2\times n_2}$ and $\B'_{n_2\times l_2}$ are matrices over $H$. Then
      $$(\A\odot\B)(I_{n_1}\odot\B')=\A\odot \B\B'.$$
  \item[(3)] If $H$ is furthermore a Hopf algebra with bijective antipode $S$, then for any multiplicative matrix $\G_{n\times n}$ over $H$, we have
      $$S(\G)\G=\G S(\G)=I_n\;\;\;\;\text{and}\;\;\;\;
        S^{-1}(\G)^\mathrm{T}\G^\mathrm{T}=\G^\mathrm{T}S^{-1}(\G)^\mathrm{T}=I_n.$$
\end{itemize}
\end{lemma}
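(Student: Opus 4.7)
The plan is to verify all three items by direct entry-wise computation using the double-indexing inherent to the Kronecker product $\odot$; no structural argument beyond the coalgebra axioms and the antipode identities is needed. The convenient convention is to index the rows and columns of $\A\odot\B$ by pairs $(i,k),(j,l)$, so that its $((i,k),(j,l))$-entry is $a_{ij}b_{kl}$; once this is fixed, the rest is bookkeeping.

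For item (1), transposition swaps the row and column labels, while on the other side one has $(\A^\mathrm{T}\odot\B^\mathrm{T})_{((j,l),(i,k))}=(\A^\mathrm{T})_{ji}(\B^\mathrm{T})_{lk}=a_{ij}b_{kl}$, so the two matrices agree entry by entry. For item (2), the same indexing yields
\begin{align*}
\bigl((\A\odot\B)(I_{n_1}\odot\B')\bigr)_{((i,k),(j',l'))}
&=\sum_{j,l}a_{ij}b_{kl}\,\delta_{jj'}\,b'_{ll'}  \\
&=a_{ij'}(\B\B')_{kl'},
\end{align*}
which is exactly the $((i,k),(j',l'))$-entry of $\A\odot\B\B'$.

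For item (3), I would translate the matrix identity $\Delta(\G)=\G\;\widetilde{\otimes}\;\G$ into the scalar identities $\Delta(g_{ij})=\sum_k g_{ik}\otimes g_{kj}$, and likewise $\varepsilon(g_{ij})=\delta_{ij}$. Feeding these into the antipode axioms $m(S\otimes\id)\Delta=\varepsilon\eta=m(\id\otimes S)\Delta$ gives $\sum_k S(g_{ik})g_{kj}=\delta_{ij}=\sum_k g_{ik}S(g_{kj})$, which are precisely $S(\G)\G=I_n=\G S(\G)$. To obtain the $S^{-1}$ version I would apply $S^{-1}$ to $\G S(\G)=I_n$, using that $S^{-1}$ is again an algebra antihomomorphism (equivalently, it is the antipode of the opposite Hopf algebra), to get $\sum_k g_{kj}S^{-1}(g_{ik})=\delta_{ij}$; swapping the roles of $i$ and $j$ recognizes this as the $(i,j)$-entry of $\G^\mathrm{T}S^{-1}(\G)^\mathrm{T}=I_n$, and the symmetric manipulation applied to $S(\G)\G=I_n$ yields $S^{-1}(\G)^\mathrm{T}\G^\mathrm{T}=I_n$. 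The only real obstacle is the bookkeeping of double indices in the Kronecker product and the careful interplay of transposition with $S^{-1}$ in the last identity; once these conventions are fixed, each item reduces to a one-line check and no new ideas are needed beyond what already appears in the proof of Lemma~\ref{lem:Kprod}.
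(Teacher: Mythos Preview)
Your proposal is correct and follows essentially the same approach as the paper: (1) and (2) are done by direct entry-wise verification, and (3) uses the antipode axiom for $S(\G)\G=\G S(\G)=I_n$ and then the anti-multiplicativity of $S^{-1}$ to deduce the transposed identities. The only cosmetic difference is that the paper packages the $S^{-1}$ step at the matrix level via the identity $S^{-1}(\A)^{\mathrm{T}}S^{-1}(\B)^{\mathrm{T}}=S^{-1}(\B\A)^{\mathrm{T}}$ (citing an external lemma), whereas you unfold the same computation entry by entry.
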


\begin{proof}
Equations in (1) and (2) could be verified directly. The former equation in (3) holds due to the definition of multiplicative matrices. As for the latter one, we compute according to \cite[Lemma 3.4]{LL??1} that
$$S^{-1}(\G)^\mathrm{T}\G^\mathrm{T}=S^{-1}(\G)^\mathrm{T}S^{-1}(S(\G))^\mathrm{T}
  =S^{-1}(S(\G)\G)^\mathrm{T}=S^{-1}(I_n)^\mathrm{T}=I_n,$$
since $S^{-1}$ is an algebra anti-endomorphism on $H$. Of course, $\G^\mathrm{T}S^{-1}(\G)^\mathrm{T}=I_n$ holds similarly.
\end{proof}

\subsection{Non-Trivial Primitive Matrices}

In this subsection, we turn to observe properties of primitive matrices. This notion is a non-pointed analogue of primitive elements (see \cite[Definition 3.2]{LZ19}).

\begin{definition}
Let $(H,\Delta,\varepsilon)$ be a coalgebra over $\k$. Suppose $\C_{r\times r}$ and $\D_{s\times s}$ are basic multiplicative matrices over $H$.
\begin{itemize}
  \item[(1)] An $r\times s$ matrix $\X$ over $H$ is said to be $(\C,\D)$-primitive, if $\Delta(\X)=\C\;\widetilde{\otimes}\;\X+\X\;\widetilde{\otimes}\;\D$;
  \item[(2)] A primitive matrix $\X$ is said to be non-trivial, if any entries of $\X$ does not belong to the coradical $H_0$.
\end{itemize}
\end{definition}

It is clear that entries of primitive matrices must belong to $H_1:=H_0\wedge H_0$. Moreover, there are further properties for non-trivial primitive matrices:

\begin{proposition}\label{prop:nontrivial}
Let $C,D\in\mathcal{S}$, and $\C_{r\times r},\D_{s\times s}$ be their basic multiplicative matrices, respectively. Suppose $\X:=(x_{ij})_{r\times s}$ is a $(\C,\D)$-primitive matrix. Then the followings are equivalent:
\begin{itemize}
  \item[(1)] $\X$ is non-trivial;
  \item[(2)] $x_{ij}\notin H_0$ holds for all $1\leq i\leq r$ and $1\leq j\leq s$;
  \item[(3)] $\{x_{ij}\mid 1\leq j\leq s\}$ are linearly independent in $H_1/H_0$ (the quotient space) for each $1\leq i\leq r$, and $\{x_{ij}\mid 1\leq i\leq r\}$ are linearly independent $H_1/H_0$ for each $1\leq j\leq s$.
\end{itemize}
\end{proposition}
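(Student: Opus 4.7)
The implications (3) $\Rightarrow$ (2) $\Rightarrow$ (1) are essentially formal: any entry $x_{ij}\in H_0$ has zero image in $H_1/H_0$ and hence depends linearly on any other elements of its row or column, so (3) forces (2); and (2) is just the literal definition of non-triviality. Consequently, the substance lies in (1) $\Rightarrow$ (3), which I would prove in contrapositive form: if any row (or column) of $\X$ is linearly dependent modulo $H_0$, then the entire row (resp.\ column) already lies in $H_0$, in particular contradicting (1).

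The key computation runs as follows. Assuming scalars $\alpha_1,\ldots,\alpha_s$ not all zero with $y := \sum_j \alpha_j x_{ij}\in H_0$, I expand $\Delta(y)$ via the $(\C,\D)$-primitivity of $\X$:
\[
\Delta(y)\;=\;\sum_k c_{ik}\otimes (\X\alpha)_k + \sum_k x_{ik}\otimes (\D\alpha)_k.
\]
Since $H_0$ is a subcoalgebra, $\Delta(y)\in H_0\otimes H_0$. Projecting to $(H/H_0)\otimes H$ annihilates the first summand (each $c_{ik}\in C\subseteq H_0$) and yields
\[
\sum_k \overline{x_{ik}}\otimes (\D\alpha)_k\;=\;0\quad\text{in}\;(H/H_0)\otimes H.
\]
If the entries $(\D\alpha)_k$ turn out to be linearly independent in $H$, this forces every $\overline{x_{ik}}=0$, i.e., the whole $i$-th row of $\X$ lies in $H_0$, so in particular some entry of $\X$ lies in $H_0$ and (1) fails. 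A symmetric argument — starting from $\sum_i\beta_i x_{ij}\in H_0$ with $\beta\neq 0$ and projecting to $H\otimes(H/H_0)$ — handles a column dependence by using the basicness of $\C$ on the first tensor factor.

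The delicate step, which I expect to be the main obstacle, is the linear independence of the twisted entries $(\D\alpha)_k = \sum_j \alpha_j d_{kj}$ in $H$ (and, symmetrically, of the entries of $\beta^{\mathrm{T}}\C$). This is where basicness of $\D$ is indispensable: a relation $\sum_k\gamma_k(\D\alpha)_k=0$ expands to $\sum_{k,j}\gamma_k\alpha_j d_{kj}=0$, and the linear independence of the family $\{d_{kj}\}$ together with $\alpha\neq 0$ forces $\gamma=0$. The remaining bookkeeping — choosing the correct tensor factor to quotient by so that exactly one of the two primitive terms survives, and matching it against the basic matrix on the other side — is routine, and delivers the required contradiction to (1).
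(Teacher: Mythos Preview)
Your argument is correct in substance and aligns with the paper's method: compute $\Delta$ of the dependent combination, kill the $\C$-term by passing to $(H/H_0)\otimes H$, and use basicness of $\D$ on the surviving tensor factor. In fact your conclusion --- that a single nontrivial dependence in a row modulo $H_0$ forces the \emph{entire} row into $H_0$ --- is slightly sharper than what the paper records in its $(2)\Rightarrow(3)$ step.

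There is, however, one logical slip. You assert that ``(2) is just the literal definition of non-triviality,'' but in the paper \emph{non-trivial} means only that \emph{some} entry lies outside $H_0$ (see the paragraph following this proposition, where a general matrix is called non-trivial ``if some of its entries does not belong to $H_0$''), whereas (2) asks that \emph{every} entry does; the whole point of listing (1) and (2) separately is that they are a priori different. Consequently your contrapositive --- a row dependence forces that row into $H_0$ --- contradicts (2) but not yet (1), since other rows might still contain entries outside $H_0$. What you have actually proved is $(2)\Rightarrow(3)$.

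The missing implication $(1)\Rightarrow(2)$ follows by iterating your own lemma: if some $x_{i_0 j_0}\in H_0$, regard this as the degenerate row dependence with $\alpha=e_{j_0}$ to conclude that row $i_0\subseteq H_0$; then each $x_{i_0 j}\in H_0$ is a degenerate column dependence forcing column $j\subseteq H_0$, so every entry of $\X$ lies in $H_0$, contradicting (1). With this one-line propagation your proof is complete. The paper carries out the same propagation in its $(1)\Rightarrow(2)$ step, phrased via dual functionals $f_k$ to the $c_{ik}$ rather than via your quotient map, and then proves $(2)\Rightarrow(3)$ by essentially your computation.
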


\begin{proof}
Denote that
$$\C=\left(\begin{array}{cccc}
    c_{11} & c_{12} & \cdots & c_{1r}  \\
    c_{21} & c_{22} & \cdots & c_{2r}  \\
    \vdots & \vdots & \ddots & \vdots  \\
    c_{r1} & c_{r2} & \cdots & c_{rr}
  \end{array}\right)
  \;\;\;\;\text{and}\;\;\;\;
  \D=\left(\begin{array}{cccc}
    d_{11} & d_{12} & \cdots & d_{1s}  \\
    d_{21} & d_{22} & \cdots & d_{2s}  \\
    \vdots & \vdots & \ddots & \vdots  \\
    d_{s1} & d_{s2} & \cdots & d_{ss}
  \end{array}\right).$$

(1)$\Rightarrow$(2): Assume (2) does not hold, and that is to say $x_{ij}\in H_0$ for some $i,j$. The condition that $\X$ is $(\C,\D)$-primitive provides the equation
$$\Delta(x_{ij})
  =\sum\limits_{k=1}^r c_{ik}\otimes x_{kj}+\sum\limits_{l=1}^s x_{il}\otimes d_{lj}.$$
Since $\{c_{ik}\mid 1\leq k\leq r\}$ are linearly independent, we could find some linear functions $\{f_{k'}\mid 1\leq k'\leq r\}$ on $H$, such that $\langle f_{k'},c_{ik}\rangle=\delta_{k',k}$ holds for any $1\leq k',k\leq r$. Then we obtain for each $1\leq k\leq r$ that
$$(f_k\otimes\id)\circ\Delta(x_{ij})=x_{ik}+\sum\limits_{l=1}^s\langle f_k,x_{il}\rangle d_{lj},$$
which follows that
$$x_{ik}=(f_k\otimes\id)\circ\Delta(x_{ij})-\sum\limits_{l=1}^s\langle f_k,x_{il}\rangle d_{lj}
\in H_0+D \subseteq H_0,$$
due to our assumption that $x_{ij}\in H_0$.

Obviously there is a similar process on $\{d_{lj}\mid 1\leq l\leq s\}$, and we conclude that the assumption $x_{ij}\in H_0$ would follow that $x_{ik}\in H_0$ and $x_{lj}\in H_0$ hold for all $1\leq k\leq r$ and $1\leq l\leq s$. Consequently it is found that all the entries of $\X$ belong to $H_0$, which contradicts (1).

(2)$\Rightarrow$(3):
For any $1\leq i\leq r$, suppose $\alpha_j\in\k \;(1\leq j\leq s)$ such that $\sum\limits_{j=1}^s\alpha_jx_{ij}\in H_0$. Then from the following computation
\begin{eqnarray*}
\Delta\left(\sum\limits_{j=1}^s\alpha_jx_{ij}\right)
&=& \sum\limits_{j=1}^s\alpha_j\Delta\left(x_{ij}\right)
\;=\; \sum\limits_{j=1}^s \alpha_j
      \left(\sum\limits_{k=1}^r c_{ik}\otimes x_{kj}+\sum\limits_{l=1}^s x_{il}\otimes d_{lj}\right)  \\
&=& \sum\limits_{k=1}^r c_{ik}\otimes\left(\sum\limits_{j=1}^s \alpha_jx_{ij}\right)
+\sum\limits_{j,l=1}^s \alpha_j x_{il}\otimes d_{lj},
\end{eqnarray*}
we know that
$$\sum\limits_{j,l=1}^s \alpha_j x_{il}\otimes d_{lj}
  =\Delta\left(\sum\limits_{j=1}^s\alpha_jx_{ij}\right)
   -\sum\limits_{k=1}^r c_{ik}\otimes\left(\sum\limits_{j=1}^s \alpha_jx_{ij}\right)
  \in H_0\otimes H_0.$$
As a consequence, (2) and the linear independence of $\{d_{lj}\mid 1\leq l,j\leq s\}$ follow that $\alpha_j=0$ for all $1\leq j\leq s$. Thus we conclude that $\{x_{ij}\mid 1\leq j\leq s\}$ are linearly independent in $H_1/H_0$.

The other desired linear independence in $H_1/H_0$ is obtained similarly.

(3)$\Rightarrow$(1): This is direct.
\end{proof}

For the remaining of this paper, each element $x\in H\setminus H_0$ is said to be \textit{non-trivial} for convenience. Moreover, an arbitrary matrix $\X$ over $H$ is also said to be \textit{non-trivial}, if some of its entries does not belong to $H_0$. Of course, they would be called \textit{trivial} otherwise.

\section{Link-Indecomposable Coalgebras and Decompositions}\label{section3}

\subsection{Link Relations and Matric Condition}

The definitions involving \emph{link-indecomposable components} were introduced in \cite{Mon95}.  They were later presented by \cite[Section 4.8]{Rad12} in a slightly different way, which will be listed as follows in this paper. Let $H$ be a coalgebra over $\k$, and denote the set of all its simple subcoalgebras by $\mathcal{S}$. Besides, the wedge product operation on $H$ is denoted by $\wedge$.

\begin{definition}\label{def:link0}
Suppose that $C,D\in\mathcal{S}$.
\begin{itemize}
  \item[(1)] $C$ and $D$ are said to be directly linked in $H$, if $C+D\subsetneq C\wedge D+D\wedge C$;
  \item[(2)] $C$ and $D$ are said to be linked in $H$, if there is an $n\in\N$ and $E_0,E_1,\cdots,E_n\in\mathcal{S}$, such that $C=E_0$, $D=E_n$, and $E_i$ and $E_{i+1}$ are directly linked in $H$ for $0\leq i<n$.
\end{itemize}
\end{definition}

Note that the link relation in $H$ is an equivalence relation on $\mathcal{S}$. It could be remarked that this relation is the same as which in \cite{SM78}. Some relevant concepts and results in the literature are recalled as follows.

\begin{definition}
\begin{itemize}
  \item[(1)] A link-indecomposable subcoalgebra of $H$ is a subcoalgebra $H'\subseteq H$, such that any two simple subcoalgebras of $H'$ are linked in $H'$;
  \item[(2)] A link-indecomposable component of $H$ is a maximal link-indecomposable subcoalgebra of $H$.
\end{itemize}
\end{definition}

It is known that the link-indecomposable components are closely related to the decomposition of coalgebras. This could be seen by following lemmas.

\begin{lemma}(\cite[Lemma 4.8.3]{Rad12})
Suppose $H=H'\oplus H''$ is the direct sum of subcoalgebras $H'$ and $H''$. Let $C,D\in\mathcal{S}$ be simple subcoalgebras of $H$. Then:
\begin{itemize}
  \item[(1)] If $C\subseteq H'$ and $D\subseteq H''$, then $C$ and $D$ are not directly linked in $H$;
  \item[(2)] If $C$ and $D$ are linked in $H$, then $C,D\subseteq H'$ or $C,D\subseteq H''$.
\end{itemize}
\end{lemma}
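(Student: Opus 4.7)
The plan is to establish (1) by proving that under its hypothesis one has the equality $C\wedge D+D\wedge C=C+D$, which directly negates the strict inclusion demanded by Definition \ref{def:link0}(1). Item (2) will then drop out by an induction on the length of the chain of direct links joining $C$ to $D$.

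A preliminary observation, which also underlies the case analysis in (2), is that every simple subcoalgebra $E$ of $H=H'\oplus H''$ is contained entirely in $H'$ or entirely in $H''$: indeed $E\cap H'$ is a subcoalgebra of $E$, since $\Delta(E\cap H')\subseteq (E\otimes E)\cap(H'\otimes H')=(E\cap H')\otimes(E\cap H')$, so by simplicity of $E$ this intersection is $0$ or $E$, and an analogous statement holds for $E\cap H''$; hence one of them must equal $E$.

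For the core of (1), take $x\in C\wedge D$ and decompose $x=x'+x''$ with $x'\in H'$ and $x''\in H''$. The essential input is the four-way splitting
$$H\otimes H=(H'\otimes H')\oplus(H'\otimes H'')\oplus(H''\otimes H')\oplus(H''\otimes H''),$$
under which $\Delta(x')$ lies in the first summand and $\Delta(x'')$ in the fourth, while the hypotheses $C\subseteq H'$ and $D\subseteq H''$ force $C\otimes H\subseteq(H'\otimes H')\oplus(H'\otimes H'')$ and $H\otimes D\subseteq(H'\otimes H'')\oplus(H''\otimes H'')$. Projecting the membership $\Delta(x)\in C\otimes H+H\otimes D$ onto the $(H'\otimes H')$-piece yields $\Delta(x')\in C\otimes H'$, and onto the $(H''\otimes H'')$-piece yields $\Delta(x'')\in H''\otimes D$; applying $\id\otimes\varepsilon$ to the first and $\varepsilon\otimes\id$ to the second then recovers $x'\in C$ and $x''\in D$, so $x\in C+D$. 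The reverse inclusion $C+D\subseteq C\wedge D$ is immediate from $\Delta(C)\subseteq C\otimes C\subseteq C\otimes H$ and $\Delta(D)\subseteq D\otimes D\subseteq H\otimes D$, and the argument for $D\wedge C$ is symmetric.

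For item (2), a link between $C$ and $D$ provides a chain $C=E_0,E_1,\ldots,E_n=D$ of simple subcoalgebras in which consecutive pairs are directly linked. Each $E_i$ lies in $H'$ or in $H''$ by the opening observation, and item (1) (applied contrapositively to each adjacent pair) forces consecutive members to live in the same summand; a one-line induction on $n$ then places all of them in a single summand. The only place where real attention is required is the projection bookkeeping in (1), but this is purely a matter of tracking the four pieces of $H\otimes H$ and presents no genuine obstacle.
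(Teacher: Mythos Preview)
Your argument is correct and is essentially the standard proof of this lemma; note that the paper itself does not supply a proof but merely cites \cite[Lemma 4.8.3]{Rad12}, so there is nothing to compare against. The core of your part (1)---splitting $H\otimes H$ into four pieces and reading off $\Delta(x')\in C\otimes H'$, $\Delta(x'')\in H''\otimes D$---is exactly right, and (2) follows cleanly.

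One small point in your preliminary observation deserves a sentence more. You show that $E\cap H'$ and $E\cap H''$ are subcoalgebras of the simple coalgebra $E$, hence each is $0$ or $E$, and then write ``hence one of them must equal $E$.'' But from the dichotomy alone this does not follow: a priori both intersections could be $0$ (think of a diagonal subspace in a direct sum of vector spaces). The fix is easy and in the same spirit as your argument for (1): if $E\cap H'=0$, take $e\in E$ with decomposition $e=e'+e''$, and apply $\id\otimes\pi'$ (where $\pi':H\to H'$ is the coalgebra projection) to $\Delta(e)\in E\otimes E$. On one hand this lands in $E\otimes H'$; on the other hand it equals $\Delta(e')$ since $\Delta(e'')\in H''\otimes H''$ is killed. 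Thus $\Delta(e')\in(E\otimes H')\cap(H'\otimes H')=(E\cap H')\otimes H'=0$, so $e'=(\id\otimes\varepsilon)\Delta(e')=0$ and $e\in H''$. This gives $E\subseteq H''$ directly, and the rest of your proof goes through unchanged.
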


\begin{lemma}(\cite[Theorem 2.1]{Mon95} and \cite[Theorem 4.8.6]{Rad12})\label{lem:linkdecomp}
\begin{itemize}
  \item[(1)] $H$ is the direct sum of its link-indecomposable components;
  \item[(2)] Suppose that $H=\bigoplus\limits_{i}H_{(i)}$ is the direct sum of non-zero link-indecomposable subcoalgebras of $H$. Then $H_{(i)}$'s are the link-indecomposable components of $H$.
\end{itemize}
\end{lemma}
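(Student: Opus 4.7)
The plan is to construct the link-indecomposable components explicitly from the equivalence classes of the link relation, and then deduce (2) as a short uniqueness statement. Partition $\mathcal{S}$ into link-equivalence classes $\{\mathcal{S}_\lambda\}_\lambda$, write $A_\lambda := \sum_{E \in \mathcal{S}_\lambda} E \subseteq H_0$, and define
\[
H_\lambda \;:=\; \sum_{n \geq 1}\ \sum_{E_1,\ldots,E_n \in \mathcal{S}_\lambda} E_1 \wedge E_2 \wedge \cdots \wedge E_n.
\]
Since each finite wedge of subcoalgebras is itself a subcoalgebra, every $H_\lambda$ is automatically a subcoalgebra of $H$. The substantive content of (1) is to establish $H = \bigoplus_\lambda H_\lambda$, after which $H_\lambda$ will be the link-indecomposable component attached to $\lambda$: its simple subcoalgebras, being contained in the coradical $H_0 = \bigoplus_\mu A_\mu$, are forced by direct-sum uniqueness to lie in $\mathcal{S}_\lambda$; any two of them are pairwise linked inside $H_\lambda$ because $H_\lambda$ is a direct summand of $H$ and its internal wedges coincide with those computed in $H$; maximality is automatic, since any simple outside $\mathcal{S}_\lambda$ is not linked to anything in $\mathcal{S}_\lambda$.

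For the surjectivity $H = \sum_\lambda H_\lambda$, the Fundamental Theorem of Coalgebras lets me restrict to finite-dimensional subcoalgebras $D$, which sit inside an iterated wedge of the form $(D_0)^{\wedge(m+1)}$ with $D_0 \subseteq \bigoplus_\lambda A_\lambda$. I would then argue by induction on the coradical filtration level $n$ that $H_n \subseteq \sum_\lambda H_\lambda$, using the natural $H_0$-bicomodule structure on the quotient $H_n / H_{n-1}$: its isotypic decomposition $\bigoplus_{E, F \in \mathcal{S}} V_{E, F}$ has the property that $V_{E, F} \neq 0$ forces $E$ and $F$ to be directly linked (a nonzero element of $V_{E, F}$ produces a witness for the strict containment in Definition \ref{def:link0}(1) modulo previous filtration), so the off-diagonal pieces with $E \in \mathcal{S}_\lambda$, $F \in \mathcal{S}_\mu$, $\lambda \neq \mu$ vanish, and the surviving pieces lift back to $\sum_\lambda H_\lambda$.

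Directness of the sum $\bigoplus_\lambda H_\lambda$ is the main obstacle, and follows the same isotypic approach. The key input is that for $\lambda \neq \mu$ and any $E \in \mathcal{S}_\lambda$, $F \in \mathcal{S}_\mu$, Definition \ref{def:link0}(1) forces $E \wedge F = E + F = F \wedge E$ (both wedges trivial); combined with an induction on the filtration level, this yields $H_\lambda \cap H_\mu \subseteq A_\lambda \cap A_\mu = 0$ at the coradical, and more generally $H_\lambda \cap \sum_{\mu \neq \lambda} H_\mu = 0$. Propagating trivial-wedge behavior through repeated applications of $H_n = H_0 \wedge H_{n-1}$ is the technically delicate step.

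Part (2) is then quick. Given an abstract decomposition $H = \bigoplus_i H_{(i)}$ into non-zero link-indecomposable subcoalgebras, the preceding lemma (\cite[Lemma 4.8.3]{Rad12}) shows that simples lying in distinct summands $H_{(i)}, H_{(j)}$ are not linked in $H$; each non-zero link-indecomposable $H_{(i)}$ therefore accommodates simples from only one class $\lambda_i$, and combining link-indecomposability with the explicit description of $H_{\lambda_i}$ from (1) forces $H_{(i)} \subseteq H_{\lambda_i}$. Since both decompositions exhaust $H$, these inclusions must be equalities, identifying the $H_{(i)}$ with the link-indecomposable components constructed above.
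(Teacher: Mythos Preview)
The paper does not supply its own proof of this lemma; it is quoted from \cite{Mon95} and \cite{Rad12} (and ultimately goes back to Shudo--Miyamoto \cite{SM78}). Your outline is broadly in the spirit of those references, building the components as unions of iterated wedges over link-equivalence classes and verifying that these exhaust $H$ as a direct sum.

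Two points need tightening. First, your claim that a nonzero isotypic component $V_{E,F}$ in $H_n/H_{n-1}$ forces $E$ and $F$ to be \emph{directly} linked is false for $n\geq 2$: a nonzero class there only yields that $E$ and $F$ are \emph{linked} through a chain of length at most $n$ (this is precisely the content of Proposition~\ref{prop:linkprim2}(1), proved later in this paper by the same inductive mechanism you are invoking). Since ``linked'' suffices to kill the cross-class pieces, surjectivity survives, but your parenthetical justification does not. Second, the step ``the surviving pieces lift back to $\sum_\lambda H_\lambda$'' is not justified as written: knowing that $\bar h\in H_n/H_{n-1}$ lies in the $(E,F)$-block with $E,F\in\mathcal{S}_\lambda$ does not by itself place any lift of $\bar h$ inside your iterated-wedge subcoalgebra $H_\lambda$, and this is exactly where the argument must do real work. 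The standard and cleaner route (closest to Radford's treatment) is to use the coradical orthonormal idempotents $e_\lambda:=\sum_{C\in\mathcal{S}_\lambda}e_C$: once one shows ${}^EH^F\subseteq H_0$ for $E,F$ not linked (hence ${}^EH^F=0$ for $E\neq F$ not linked), the equality ${}^\lambda H=H^\lambda$ follows, these common spaces are then subcoalgebras, and $H=\bigoplus_\lambda {}^\lambda H$ is automatic---bypassing both the lift-back issue and the ``technically delicate'' propagation you flag. Your argument for part~(2) is correct.
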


Now we provide some sufficient conditions for simple subcoalgebras to be linked, with the help of non-trivial matrices over $H$. For the purpose, we introduce a family of so-called \textit{coradical orthonormal idempotents} $\{e_C\}_{C\in\mathcal{S}}$ in $H^\ast$, whose existence is affirmed in \cite[Lemma 2]{Rad78} or \cite[Corollary 3.5.15]{Rad12} for any coalgebra $H$:

\begin{definition}
Let $H$ be a coalgebra. $\{e_C\}_{C\in\mathcal{S}}\subseteq H^\ast$ is called a family of coradical orthonormal idempotents in $H^\ast$, if
$$e_C|_D=\delta_{C,D}\varepsilon|_D,\;\;\;\;e_Ce_D=\delta_{C,D}e_C\;\;\;\;
      (\text{for any}\;C,D\in\mathcal{S}),\;\;\;\;\sum\limits_{C\in\mathcal{S}}e_C=\varepsilon.$$
\end{definition}

It should be remarked that the sum $\sum_{C\in\mathcal{S}}e_C$ is well-defined in $H^\ast$, as long as $\{e_C\}_{C\in\mathcal{S}}$ is orthogonal (hence linearly independent). This is because each $h\in H$ belongs to a finite-dimensional subcoalgebra vanishing through all but finitely many $e_C$'s, as explained in the second paragraph of \cite[Section 1]{Rad78}.

Also, we would use following notations for convenience:
$${^C}h=h\leftharpoonup e_C,\;\;\;h^D=e_D\rightharpoonup h,\;\;\;{^C}h^D=e_D\rightharpoonup h\leftharpoonup e_C\;\;\;(\text{for any}\;h\in H\;\text{and}\;C,D \in \mathcal{S}),$$
where $\leftharpoonup$ and $\rightharpoonup$ are hit actions of $H^\ast$ on $H$. Notations such as $V^{C}:=e_C\rightharpoonup V$ for a subspace $V$ of $H$ are used as well.

It is shown in the next lemma how the coradical orthonormal idempotents are applied to connect non-trivial wedges with non-trivial primitive matrices:

\begin{lemma}\label{lem:linkprim}
Let $C,D\in\mathcal{S}$.
\begin{itemize}
  \item[(1)] Suppose $\{e_E\}_{E\in\mathcal{S}}$ is a family of coradical orthonormal idempotents in $H^\ast$. If $C\wedge D\supsetneq C+D$, then there exists some $x\in C\wedge D$ such that
    $$x={}^Cx{}^D\notin H_0.$$
  \item[(2)] Let $\C,\D$ be basic multiplicative matrices of $C$ and $D$, respectively. Then $C\wedge D\supsetneq C+D$ if and only if there is a non-trivial $(\C,\D)$-primitive matrix over $H$.
\end{itemize}
\end{lemma}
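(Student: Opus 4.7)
My strategy is to establish Part~(1) first via a direct Sweedler-style computation, and then use it to handle Part~(2). For Part~(1), given $y\in(C\wedge D)\setminus(C+D)$, the candidate element is $x:={}^C y^D=e_D\rightharpoonup y\leftharpoonup e_C$. The key identity, obtained from the compatibility of $\Delta$ with the hit actions, is
\[
\Delta\bigl({}^E y^F\bigr)\;=\;\sum\bigl({}^E y_{(1)}\bigr)\otimes\bigl(y_{(2)}^F\bigr)\qquad(E,F\in\mathcal{S}).
\]
Writing $\Delta(y)=\sum_i c_i\otimes h_i+\sum_j h'_j\otimes d_j$ with $c_i\in C$, $d_j\in D$, and using the orthogonality $e_E|_F=\delta_{E,F}\varepsilon|_F$, a case analysis on $(E,F)$ yields: ${}^Cy^D\in C\wedge D$; ${}^Cy^F\in C$ when $F\neq D$; ${}^Ey^D\in D$ when $E\neq C$; and ${}^Ey^F=0$ when both $E\neq C$ and $F\neq D$. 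Since $\sum_{E\in\mathcal{S}}e_E=\varepsilon$, the decomposition $y=\sum_{E,F}{}^E y^F$ is a finite sum, whence $y-{}^C y^D\in C+D$. To conclude $x\notin H_0$, I would establish $(C\wedge D)\cap H_0=C+D$ by the same case analysis applied to $z\in(C\wedge D)\cap H_0$: writing $z=\sum_E z_E$ with $z_E\in E$, the identity ${}^E z^E=z_E$ for such $z$ together with the vanishing in the case $E\notin\{C,D\}$ forces $z\in C+D$. Idempotence of $e_C,e_D$ and the $H^\ast$-bimodule structure of the hit actions then deliver $x={}^C x^D$, completing~(1).

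For Part~(2), the direction ($\Leftarrow$) is immediate: the defining relation of a $(\C,\D)$-primitive matrix places every entry of $\X$ in $C\wedge D$, and non-triviality supplies an entry outside $H_0\supseteq C+D$. For ($\Rightarrow$), I would apply~(1) to obtain $x={}^C x^D\in C\wedge D\setminus H_0$ and construct the $r\times s$ matrix $\X$ from $x$ by introducing matrix-unit functionals $e^C_{ij},e^D_{ij}\in H^\ast$, determined on $C,D$ by $\langle e^C_{ij},c_{kl}\rangle=\delta_{ik}\delta_{jl}$ and $\langle e^D_{ij},d_{kl}\rangle=\delta_{ik}\delta_{jl}$ and extended via $e_C,e_D$ to vanish on every other simple subcoalgebra. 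The entries $x_{ij}$ would be suitable contractions of $x$ against these functionals through the hit actions. The primitive-matrix relation $\Delta(x_{ij})=\sum_k c_{ik}\otimes x_{kj}+\sum_l x_{il}\otimes d_{lj}$ would then follow from coassociativity applied to a decomposition $\Delta(x)=\sum_{k,l}c_{kl}\otimes a_{kl}+\sum_{m,n}b_{mn}\otimes d_{mn}$ combined with the multiplicativity of $\C,\D$; non-triviality of $\X$ comes from the fact that $x$ is recovered (modulo $C+D$) from the entries of $\X$, forcing some $x_{ij}\notin H_0$.

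The hard part will be the construction in the ($\Rightarrow$) direction of~(2): producing matrix-unit functionals in $H^\ast$ that interact cleanly with the coproduct and aligning indices so that coassociativity yields exactly the primitive-matrix relation, while absorbing the ambiguity in the decomposition of $\Delta(x)$ modulo $C\otimes D$. An alternative route, via Proposition~\ref{prop:sim}(1), is to observe that a $(\C,\D)$-primitive matrix $\X$ is exactly the off-diagonal block of a multiplicative matrix of the shape $\bigl(\begin{smallmatrix}\C & \X\\ 0 & \D\end{smallmatrix}\bigr)$, and to extract $\X$ from the block-triangular reduction of a multiplicative matrix of a finite-dimensional subcoalgebra containing $x$, $C$ and~$D$.
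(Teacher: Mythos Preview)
Your Part~(1) and the $(\Leftarrow)$ direction of Part~(2) coincide with the paper's proof: both start from $y\in(C\wedge D)\setminus(C+D)$, set $x={}^Cy^D$, use the orthogonality of the $e_E$ against the decomposition $\Delta(y)\in C\otimes H+H\otimes D$ to see $y-x\in C+D$, and note that $x\notin C+D$ forces $x\notin H_0$ (your explicit identity $(C\wedge D)\cap H_0=C+D$ is a cleaner way to say what the paper leaves as ``proof by contradiction via the hit actions'').

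For $(\Rightarrow)$ in Part~(2) the paper takes a different route: having produced $x={}^Cx^D\in{}^CH_1{}^D\setminus H_0$, it simply invokes \cite[Theorem~3.1(1)]{LZ19}, which states that every element of ${}^CH_1{}^D$ is a sum of entries of finitely many $(\C,\D)$-primitive matrices; since $x\notin H_0$, one of these matrices must be non-trivial. Your proposal to build $\X$ by hand from matrix-unit functionals $e^C_{ij},e^D_{kl}$ is essentially what that cited result unpacks, so your proof would be self-contained where the paper's is not. The one point you should make precise is how the $e^C_{ij}$ live in $H^\ast$ rather than merely in $C^\ast$ or $H_0^\ast$: the phrase ``extended via $e_C,e_D$ to vanish on every other simple subcoalgebra'' specifies their restriction to $H_0$ but not to all of $H$, and the primitive-matrix identity needs the hit actions to behave on $C\wedge D$, not only on $H_0$. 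The standard fix is either to refine $e_C\in H^\ast$ into a full system of matrix units in $e_CH^\ast e_C$ (idempotent-lifting in $H^\ast$), or---simpler---to work entirely inside the finite-dimensional coalgebra $C\wedge D$, where $(C\wedge D)^\ast$ surjects onto $C^\ast\cong M_r(\k)$ and onto $D^\ast\cong M_s(\k)$ and the matrix units pull back. Your alternative via Proposition~\ref{prop:sim}(1) also works but needs an input multiplicative matrix; one obtains it by choosing a basis of $C\wedge D$ as a right comodule over itself and reading off the coefficient matrix, so this route is not really shorter than the functional construction.
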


\begin{proof}
\begin{itemize}
\item[(1)] Choose $y\in(C\wedge D)\setminus(C+D)$ and consider the sum
$$y=\varepsilon\rightharpoonup y\leftharpoonup\varepsilon
=\sum\limits_{E,F\in\mathcal{S}}\left(e_F\rightharpoonup y\leftharpoonup e_E\right)
=\sum\limits_{E,F\in\mathcal{S}} {}^Ey{}^F.$$
We claim that:
\begin{itemize}
  \item ${}^Ey{}^F\in D$ holds when $E\neq C$, and
  \item ${}^Ey{}^F\in C$ holds when $F\neq D$.
\end{itemize}
In fact, since $\Delta(y)\in C\otimes H+H\otimes D$, when $E\neq C$ we find that
$${}^Ey{}^F=({}^Ey){}^F\in \left(\langle e_E,C\rangle H+\langle e_E,H\rangle D\right){}^F
  \subseteq D{}^F\subseteq D.$$
The second claim holds similarly.

As a conclusion, we know that the summand ${}^Cy{}^D\notin C+D$, because of our choice of $y$. Now we choose $x:={}^Cy{}^D$. Clearly, $x={}^Cx{}^D$ holds by the fact that $e_C$ and $e_D$ are idempotents. Meanwhile, one could verify that the condition $x\notin C+D$ implies $x\notin H_0$ with a proof by contradiction, according to the hit actions by $\{e_E\}_{E\in\mathcal{S}}$.

\item[(2)]
Suppose that $C\wedge D\supsetneq C+D$ holds, and then there exists some element
$$x={}^Cx{}^D\in (C\wedge D)\setminus H_0$$
according to (1). In fact we could know by direct computations that $x\in {}^CH_1{}^D\setminus H_0$ holds, where $H_1=H_0\wedge H_0$. In order to show that $C$ and $D$ are linked, we might assume $C\neq D$. Therefore, due to \cite[Theorem 3.1]{LZ19}(1), we could obtain a finite number of $(\C,\D)$-primitive matrices, such that $x$ is exactly the sum of some of their entries. Now since $x\notin H_0$ is non-trivial, there must be a non-trivial $(\C,\D)$-primitive matrix $\X$ within, and this could be our desired one.

On the other hand, suppose that $\X$ is a $(\C,\D)$-primitive matrix. It is not hard to know all the entries of a $\X$ must lie in $C\wedge D$. Thus, non-trivial ones would belong to $(C\wedge D)\setminus H_0$, which follows that $C\wedge D\supsetneq C+D$ holds as well.
\end{itemize}
\end{proof}

Lemma \ref{lem:linkprim}(2) could be regarded as a non-pointed generalization of \cite[Lemma 15.2.2]{Rad12}, which provides a condition for simple subcoalgebras to be directly linked.
Furthermore, a sufficient condition for the link relation could also be verified. Before that, we need a lemma on triviality properties of block upper-triangular multiplicative matrices (with basic diagonal) over a coalgebra $H$:

\begin{lemma}\label{lem:matrixtrivial}
Let $\{e_E\}_{E\in\mathcal{S}}$ be a family of coradical orthonormal idempotents in $H^\ast$. Suppose that
\begin{equation}\label{eqn:matrixlink}
  \left(\begin{array}{cccc}
    \C_1 & \X_{12} & \cdots & \X_{1t}  \\
    0 & \C_2 & \cdots & \X_{2t}  \\
    \vdots & \vdots & \ddots & \vdots  \\
    0 & 0 & \cdots & \C_t
  \end{array}\right)
\end{equation}
is a (block) multiplicative matrix over $H$, where $\C_1,\C_2,\cdots,\C_t$ are basic multiplicative matrices for $C_1,C_2,\cdots,C_t\in\mathcal{S}$ respectively. Then
\begin{itemize}
  \item[(1)] $\X_{1t}-{}^{C_1}\X_{1t}{}^{C_t}$ is trivial;
  \item[(2)] ${}^D\X_{1t}$ is trivial for any $D\in\mathcal{S}\setminus\{C_1\}$, and
    $\X_{1t}{}^{D'}$ is trivial for any $D'\in\mathcal{S}\setminus\{C_t\}$.
\end{itemize}
\end{lemma}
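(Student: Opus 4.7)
The plan is to reduce (1) to (2) and then prove (2) by induction on the number $t$ of blocks. For the reduction, the convolution identity $e_{C_1}\cdot e_D=0$ for $D\neq C_1$ gives ${}^D\bigl({}^{C_1}\X_{1t}{}^{C_t}\bigr)=0$, so ${}^D\X_{1t}={}^D\bigl(\X_{1t}-{}^{C_1}\X_{1t}{}^{C_t}\bigr)$; since $H_0$ is a subcoalgebra stable under $\leftharpoonup e_D$, the triviality in (1) transfers directly to ${}^D\X_{1t}$, with a symmetric argument for $\X_{1t}{}^{D'}$. Conversely, expanding $\X_{1t}=\sum_{E,F\in\mathcal{S}}{}^E\X_{1t}{}^F$ shows (2) forces each summand with $(E,F)\neq(C_1,C_t)$ to be trivial, yielding (1). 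The base case $t=2$ of (2) is already essentially established in the proof of Lemma~\ref{lem:linkprim}(1): entries of the $(\C_1,\C_2)$-primitive $\X_{12}$ lie in $C_1\wedge C_2$, so $\leftharpoonup e_D$ with $D\neq C_1$ places them into $C_2\subseteq H_0$, and analogously on the right.

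For the inductive step I would fix an entry $x=(\X_{1t})_{ij}$ and expand its coproduct using the block upper-triangular multiplicativity of $\G$:
$$\Delta(x)=\sum_k(\C_1)_{ik}\otimes(\X_{1t})_{kj}+\sum_{\ell=2}^{t-1}\sum_k(\X_{1\ell})_{ik}\otimes(\X_{\ell t})_{kj}+\sum_k(\X_{1t})_{ik}\otimes(\C_t)_{kj}.$$
Applying $\leftharpoonup e_D$ for $D\neq C_1$ kills the first outer sum (as $e_D|_{C_1}=0$) and sends the last into $C_t\subseteq H_0$, so the burden falls entirely on the middle sum. I would then package the relevant scalars into the matrix $M^E:=e_E(\G)\in\M_n(\k)$, which by convolution of $\{e_E\}$ satisfies $M^EM^F=\delta_{E,F}M^E$ and $\sum_EM^E=I$, and whose block upper-triangular structure (inherited from $\G$) has diagonal blocks $(M^E)_{\ell,\ell}=\delta_{E,C_\ell}I_{r_\ell}$. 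This immediately disposes of the case $D\notin\{C_1,\dots,C_t\}$: such an $M^D$ is a strictly block upper-triangular idempotent, hence nilpotent and zero, so ${}^D\G=M^D\G$ vanishes.

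The genuinely delicate case is $D=C_{\ell_0}$ with $2\leq\ell_0\leq t-1$. Here the orthogonality $M^{C_1}M^D=0$, combined with $(M^{C_1})_{1,1}=I_{r_1}$, yields the recursion $(M^D)_{1,\ell}=-\sum_{r=2}^{\ell}(M^{C_1})_{1,r}(M^D)_{r,\ell}$; substituting this into the middle sum and swapping summation orders re-expresses ${}^D\X_{1t}$, modulo trivial terms, as a linear combination of the blocks $(M^D\G)_{r,t}={}^D\X_{r,t}$ for $r\geq 2$. The inductive hypothesis applied to each sub-matrix $\G_{[r,t]}$ (which has $t-r+1<t$ blocks) makes ${}^D\X_{r,t}$ trivial whenever $D\neq C_r$, so only the contribution at $r=\ell_0$ survives. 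This last term $-(M^{C_1})_{1,\ell_0}\cdot{}^{C_{\ell_0}}\X_{\ell_0,t}$ is to be cancelled by combining the inductive hypothesis for (1) on $\G_{[\ell_0,t]}$ — which places ${}^{C_{\ell_0}}\X_{\ell_0,t}$ in ${}^{C_{\ell_0}}H{}^{C_t}$ modulo $H_0$ — with the vanishing of $e_{C_1}$ on this Peirce component, itself a direct consequence of $e_{C_1}\cdot e_{C_{\ell_0}}=0$. This interlocking of the convolution orthogonality of $\{e_E\}$ with the inductive Peirce structure is the heart of the argument and its principal technical obstacle.
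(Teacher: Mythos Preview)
Your reduction between (1) and (2) is fine, and packaging $e_E(\G)$ as the scalar idempotent matrices $M^E$ is an elegant device that immediately dispatches the case $D\notin\{C_1,\dots,C_t\}$. The paper proceeds in the opposite direction: it derives (2) from (1) and then proves (1) by induction, comparing $\Delta(\X_{1t})$ with $\Delta({}^{C_1}\X_{1t}{}^{C_t})$ and arguing that the cross terms $\X_{1k}\,\widetilde\otimes\,\X_{kt}$ and ${}^{C_1}\X_{1k}\,\widetilde\otimes\,\X_{kt}{}^{C_t}$ agree modulo $H_0\otimes H_0$ via the Peirce expansion $\X_{1k}\,\widetilde\otimes\,\X_{kt}=\sum_E\X_{1k}{}^E\,\widetilde\otimes\,{}^E\X_{kt}$ together with the inductive triviality of the off-$C_k$ summands.

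Your final step, however, does not go through as written. After the substitution you correctly arrive (modulo trivial terms) at
\[
{}^D\X_{1t}\;\equiv\;-\sum_{r\,:\,C_r=D}(M^{C_1})_{1,r}\cdot{}^{D}\X_{rt},
\]
and you then propose to kill the surviving summand $-(M^{C_1})_{1,\ell_0}\cdot{}^{C_{\ell_0}}\X_{\ell_0,t}$ by invoking $e_{C_1}\cdot e_{C_{\ell_0}}=0$. But $(M^{C_1})_{1,\ell_0}=e_{C_1}(\X_{1,\ell_0})$ is already a fixed matrix over $\k$: multiplying it against ${}^{C_{\ell_0}}\X_{\ell_0,t}$ merely takes $\k$-linear combinations of the latter's entries and involves no further application of $e_{C_1}$. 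The convolution orthogonality gives ${}^{C_1}\bigl({}^{C_{\ell_0}}h\bigr)=0$, not that scalar multiples of elements of ${}^{C_{\ell_0}}H{}^{C_t}$ lie in $H_0$. Concretely, in a pointed $t=3$ situation with $1\times 1$ blocks your surviving term is $-e_{C_1}(\X_{12})\cdot\X_{23}$; nothing in the axioms for $\{e_E\}$ forces the scalar $e_{C_1}(\X_{12})$ to vanish, while $\X_{23}$ need not be trivial. (A smaller point: since $C_1,\dots,C_t$ are not assumed pairwise distinct, ``only $r=\ell_0$ survives'' also requires care.) The recursion via $M^{C_1}M^D=0$ thus trades the original problem for one of the same shape, and the proposed closing mechanism does not supply the missing cancellation.
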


\begin{proof}
At first we claim that (2) is a direct consequence of (1). In fact $e_D$ and $e_{C_1}$ are orthogonal in $H^\ast$ when $D\neq C_1$, and then
$${}^D\X_{1t}={}^D\X_{1t}-{}^D({}^{C_1}\X_{1t}{}^{C_t})={}^D(\X_{1t}-{}^{C_1}\X_{1t}{}^{C_t})$$
holds. Thus, ${}^D\X_{1t}$ is trivial because $\X_{1t}-{}^{C_1}\X_{1t}{}^{C_t}$ is so. Similarly, $\X_{1t}{}^{D'}$ is also trivial when $D'\in\mathcal{S}\setminus\{C_t\}$.

Now we try to prove (1) by inductions on $t\geq2$. The case $t=2$ is not hard to verify: Since
$\Delta(\X_{12})=\C_1\;\widetilde{\otimes}\;\X_{12}+\X_{12}\;\widetilde{\otimes}\;\C_2$,
we could obtain ${}^{C_1}\X_{12}=\X_{12}+\langle e_{C_1},\X_{12}\rangle\C_2$ and then
\begin{equation}\label{eqn:X12}
{}^{C_1}\X_{12}{}^{C_2}=\X_{12}+\C_1\langle e_{C_2},\X_{12}\rangle+\langle e_{C_1},\X_{12}\rangle\C_2.
\end{equation}
It follows that $\X_{12}-{}^{C_1}\X_{12}{}^{C_2}$ is trivial as desired.

Assume that (1) holds for $2,3,\cdots,t-1$, and then (2) holds as well. Note that we could actually obtain by the inductive assumption that
$$\X_{ij}-{}^{C_i}\X_{ij}{}^{C_j},\;\;{}^D\X_{ij}\;\;\text{and}\;\;\X_{ij}{}^{D'}\;\;\text{are all trivial},$$
for each $1\leq i<j\leq t$ satisfying $j-i\leq t-2$ and any $D\neq C_i$, $D'\neq C_j$. This is due to the fact
\begin{equation*}
  \left(\begin{array}{cccc}
    \C_i & \X_{i\,i+1} & \cdots & \X_{ij}  \\
    0 & \C_{i+1} & \cdots & \X_{i+1\,j}  \\
    \vdots & \vdots & \ddots & \vdots  \\
    0 & 0 & \cdots & \C_j
  \end{array}\right)
\end{equation*}
is a multiplicative submatrix.

Consider the equation induced by the multiplicative matrix (\ref{eqn:matrixlink}) that
\begin{equation}\label{eqn:DeltaX0}
\Delta(\X_{1t})=\C_1\;\widetilde{\otimes}\;\X_{1t}+\X_{12}\;\widetilde{\otimes}\;\X_{2t}
                  +\cdots+\X_{1t}\;\widetilde{\otimes}\;\C_t,
\end{equation}
which also follows
\begin{eqnarray}\label{eqn:DeltaX1}
\Delta({}^{C_1}\X_{1t}{}^{C_t})
&=& {}^{C_1}\C_1\;\widetilde{\otimes}\;\X_{1t}{}^{C_t}
    +{}^{C_1}\X_{12}\;\widetilde{\otimes}\;\X_{2t}{}^{C_t}
    +\cdots+{}^{C_1}\X_{1t}\;\widetilde{\otimes}\;\C_t{}^{C_t}  \nonumber  \\
&=& \C_1\;\widetilde{\otimes}\;{}^{C_1}\X_{1t}{}^{C_t}
    +{}^{C_1}\X_{12}\;\widetilde{\otimes}\;\X_{2t}{}^{C_t}
    +\cdots+{}^{C_1}\X_{1t}{}^{C_t}\;\widetilde{\otimes}\;\C_t.
\end{eqnarray}
However, we know by computations that
$$\X_{1k}\;\widetilde{\otimes}\;\X_{kt}
=\sum\limits_{E\in\mathcal{S}}\X_{1k}{}^E\;\widetilde{\otimes}\;{}^E\X_{kt}
=\X_{1k}{}^{C_k}\;\widetilde{\otimes}\;{}^{C_k}\X_{kt}
 +\sum\limits_{D\in\mathcal{S}\setminus\{C_k\}}\X_{1k}{}^D\;\widetilde{\otimes}\;{}^D\X_{kt}$$
holds for each $2\leq k\leq t-1$. Hence the inductive assumption implies that all the entries of matrices $$\X_{1k}\;\widetilde{\otimes}\;\X_{kt}
  -\X_{1k}{}^{C_k}\;\widetilde{\otimes}\;{}^{C_k}\X_{kt}\;\;\;\;(2\leq k\leq t-1)$$
belong to $H_0\otimes H_0$. Comparing Equations (\ref{eqn:DeltaX0}) with (\ref{eqn:DeltaX1}), we find that all entries of the matrix
$$\Delta(\X_{1t}-{}^{C_1}\X_{1t}{}^{C_t})
  -\C_1\;\widetilde{\otimes}\;(\X_{1t}-{}^{C_1}\X_{1t}{}^{C_t})
  -(\X_{1t}-{}^{C_1}\X_{1t}{}^{C_t})\;\widetilde{\otimes}\;\C_t$$
would still belong to $H_0\otimes H_0$ as a result. Consequently, a similar process with Equation (\ref{eqn:X12}) provides that
$${}^{C_1}(\X_{1t}-{}^{C_1}\X_{1t}{}^{C_t}){}^{C_t}
  -(\X_{1t}-{}^{C_1}\X_{1t}{}^{C_t})
  -\C_1\langle e_{C_1},\X_{1t}-{}^{C_1}\X_{1t}{}^{C_t}\rangle
  -\langle e_{C_t},\X_{1t}-{}^{C_1}\X_{1t}{}^{C_t})\rangle\C_t$$
is trivial, but in fact
$${}^{C_1}(\X_{1t}-{}^{C_1}\X_{1t}{}^{C_t}){}^{C_t}
={}^{C_1}\X_{1t}{}^{C_t}-{}^{C_1}({}^{C_1}\X_{1t}{}^{C_t}){}^{C_t}=0.$$
We conclude in the end that $\X_{1t}-{}^{C_1}\X_{1t}{}^{C_t}$ must be trivial.
\end{proof}

Finally, we would show that the existence of non-trivial multiplicative matrices of form (\ref{eqn:matrixlink}) are sufficient for the desired link relation.

\begin{proposition}\label{prop:linkprim2}
Suppose $C,D\in\mathcal{S}$.
\begin{itemize}
  \item[(1)] Let $\{e_E\}_{E\in\mathcal{S}}$ be a family of coradical orthonormal idempotents in $H^\ast$. If ${}^CH{}^D\setminus H_0\neq\varnothing$, then $C$ and $D$ are linked;
  \item[(2)] Suppose that
    \begin{equation}
      \left(\begin{array}{cccc}
          \C_1 & \X_{12} & \cdots & \X_{1t}  \\
          0 & \C_2 & \cdots & \X_{2t}  \\
          \vdots & \vdots & \ddots & \vdots  \\
          0 & 0 & \cdots & \C_t
      \end{array}\right)
    \end{equation}
    is a (block) multiplicative matrix over $H$, where $\C_1,\C_2,\cdots,\C_t$ are basic multiplicative matrices for $C_1,C_2,\cdots,C_t\in\mathcal{S}$ respectively. If $\X_{1t}$ is non-trivial, then $C_1$ and $C_t$ are linked.
\end{itemize}
\end{proposition}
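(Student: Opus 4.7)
The plan is to deduce (1) from a uniform vanishing property of the orthonormal idempotents across link components, and then to obtain (2) as a quick consequence of (1) together with Lemma \ref{lem:matrixtrivial}.

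For (1), I would invoke Lemma \ref{lem:linkdecomp}(1) to write $H=\bigoplus_\alpha H_{(\alpha)}$ as a direct sum of its link-indecomposable components (each a subcoalgebra), denote by $[E]$ the index $\alpha$ such that $E\subseteq H_{(\alpha)}$, and write $F_n$ for the intrinsic coradical filtration of the subcoalgebra $H_{(\alpha)}$. The intermediate claim is that, for any family $\{e_E\}_{E\in\mathcal{S}}$ of coradical orthonormal idempotents,
\[
e_C(h)=0\qquad\text{for every }h\in H_{(\alpha)}\text{ with }[C]\neq\alpha.
\]
This I would verify by induction on $n$ such that $h\in F_n$. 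The base case $n=0$ follows from $F_0=\bigoplus_{C'\in[\alpha]}C'$ and orthonormality $e_C|_{C'}=\delta_{C,C'}\varepsilon|_{C'}$. For $h\in F_n$ with $n\geq 1$, the convolution idempotency $e_C=e_C*e_C$ gives $e_C(h)=\sum e_C(h_{(1)})\,e_C(h_{(2)})$; since $\Delta(F_n)\subseteq\sum_{i+j=n}F_i\otimes F_j$ and $i+j=n\geq 1$ forces at least one of $i,j$ to be strictly less than $n$, each summand is killed either by the inductive hypothesis or by the base case.

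Granted the claim, for $h\in H_{(\alpha)}$ with $[C]\neq\alpha$ one has ${}^Ch=e_C(h_{(1)})h_{(2)}=0$, because $h_{(1)}\in H_{(\alpha)}$. Hence ${}^CH\subseteq H_{([C])}$, and symmetrically $H^D\subseteq H_{([D])}$, so that ${}^CH{}^D={}^CH\cap H^D\subseteq H_{([C])}\cap H_{([D])}$. If $C$ and $D$ were not linked, this intersection would be trivial and thus contained in $H_0$, contradicting ${}^CH{}^D\setminus H_0\neq\varnothing$. Therefore $C$ and $D$ are linked. For (2), apply Lemma \ref{lem:matrixtrivial}(1) to the given block multiplicative matrix to conclude that $\X_{1t}-{}^{C_1}\X_{1t}{}^{C_t}$ is trivial; since $\X_{1t}$ itself is non-trivial, some entry $x$ of it lies outside $H_0$, and then $y:={}^{C_1}x{}^{C_t}$ differs from $x$ by an element of $H_0$ and so is itself not in $H_0$. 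The idempotency of the hit actions $h\mapsto h\leftharpoonup e_{C_1}$ and $h\mapsto e_{C_t}\rightharpoonup h$ yields $y={}^{C_1}y{}^{C_t}$, so $y\in{}^{C_1}H{}^{C_t}\setminus H_0$; part (1) then produces the desired link between $C_1$ and $C_t$.

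The principal obstacle is the inductive step of the vanishing claim, which forces one to use simultaneously that $F_n$ is closed under the coradical structure of $H_{(\alpha)}$ (so that $\Delta(F_n)\subseteq\sum_{i+j=n}F_i\otimes F_j$) and that $e_C$ is a convolution idempotent; only their combination succeeds in reducing the computation of $e_C(h)$ to lower-filtration or coradical terms where the orthonormality already applies. Once this lemma is in hand, the passage from $h$ to ${}^Ch$ is formal, and the reduction of (2) to (1) amounts to one invocation of Lemma \ref{lem:matrixtrivial}(1).
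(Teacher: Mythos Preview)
Your proof is correct but takes a genuinely different route from the paper's. For part~(1), the paper argues by induction on the smallest $n$ for which ${}^CH_n{}^D\setminus H_0\neq\varnothing$: when $n=1$ one checks directly that ${}^CH_1{}^D\subseteq C\wedge D$, so $C$ and $D$ are directly linked; for larger $n$ the inclusion $\Delta({}^CH_n{}^D)\subseteq\sum_{E,i}{}^CH_i{}^E\otimes{}^EH_{n-i}{}^D$ forces the existence of some intermediate $E\in\mathcal{S}$ and $0<i<n$ with both ${}^CH_i{}^E$ and ${}^EH_{n-i}{}^D$ meeting $H\setminus H_0$, and the inductive hypothesis then links $C$ to $E$ and $E$ to $D$. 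Your approach instead invokes the link-indecomposable decomposition of Lemma~\ref{lem:linkdecomp} at the outset and proves the clean vanishing statement $e_C|_{H_{(\alpha)}}=0$ whenever $C\not\subseteq H_{(\alpha)}$, from which the hit actions localize to components and the conclusion is immediate. Your route is more conceptual and produces a reusable lemma (the vanishing of $e_C$ on foreign components), at the price of importing the decomposition theorem as a black box; the paper's route is self-contained and essentially constructive, exhibiting the chain of linked simples explicitly. For part~(2) the two arguments coincide: both invoke Lemma~\ref{lem:matrixtrivial}(1) to see that ${}^{C_1}\X_{1t}{}^{C_t}$ is non-trivial and then apply part~(1).
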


\begin{proof}
\begin{itemize}
\item[(1)]
Denote the coradical filtration of $H$ by $\{H_n\}_{n\geq0}$ in this proof. It is evident that
$${}^CH{}^D={}^C (\bigcup\limits_{n\geq 0} H_n){}^D=\bigcup\limits_{n\geq 0} {}^CH_n{}^D,$$
and we would show by induction on $n\geq1$ that for each $C,D\in\mathcal{S}$, $C$ and $D$ are linked if ${}^CH_n{}^D\setminus H_0\neq\varnothing$.

Consider the case when $n=1$. At first we could find that ${}^CH_1{}^D\subseteq C\wedge D$. In fact, this is due to following computations:
\begin{eqnarray*}
\Delta({}^CH_1{}^D) &\subseteq& {}^CH_0\otimes H_1{}^D+{}^CH_1\otimes H_0{}^D  \\
&\subseteq& C\otimes H+H\otimes D.
\end{eqnarray*}
Therefore, $(C\wedge D)\setminus (C+D)\supseteq {}^CH_1{}^D\setminus H_0\neq\varnothing$ holds, which follows that $C$ and $D$ are directly linked.

Now we assume that the above claim holds for $1,2,\cdots,n-1$, and suppose that ${}^CH_n{}^D\setminus H_0\neq\varnothing$. Without the loss of generality, one might moreover assume that ${}^CH_n{}^D\setminus H_1\neq\varnothing$, otherwise ${}^CH_n{}^D={}^CH_1{}^D$ and this case is solved in the previous paragraph. However, we could compute directly to know that
$$\Delta({}^CH_n{}^D)\subseteq
  \sum\limits_{i=0}^n{}^CH_i \otimes H_{n-i}{}^D\subseteq
  \sum\limits_{E\in\mathcal{S}}\sum\limits_{i=0}^n{}^CH_i{}^E\otimes{}^EH_{n-i}{}^D.$$
Discuss the following classified situations:
\begin{itemize}
  \item[a)] There exist some $E\in\mathcal{S}$ and some $1\leq i\leq n-1$ such that
    $${}^CH_i{}^E\setminus H_0\neq\varnothing\;\;\;\;\text{and}\;\;\;\;
      {}^EH_{n-i}{}^D\setminus H_0\neq\varnothing$$
    both hold. Then by our inductive assumption, $C$ and $E$ are linked, and meanwhile $E$ and $D$ are linked.
  \item[b)] For every $E\in\mathcal{S}$ and $1\leq i\leq n-1$, we always have
    $${}^CH_i{}^E\subseteq H_0\;\;\;\;\text{or}\;\;\;\;{}^EH_{n-i}{}^D\subseteq H_0.$$
    This implies that
    \begin{eqnarray*}
    {}^CH_i{}^E\otimes{}^EH_{n-i}{}^D
    &\subseteq& H_0\otimes{}^EH_{n-i}{}^D+{}^CH_i{}^E\otimes H_0  \\
    &\subseteq& H_0\otimes H_n+H_n\otimes H_0
    \end{eqnarray*}
    holds for each $E\in\mathcal{S}$ and $1\leq i\leq n-1$.
    In this situation, we find that
    \begin{eqnarray*}
    \Delta({}^CH_n{}^D)
    &\subseteq& H_0\otimes H_n
     +\sum\limits_{E\in\mathcal{S}}\sum\limits_{i=1}^{n-1}\left({}^CH_i{}^E\otimes{}^EH_{n-i}{}^D\right)
     +H_n\otimes H_0  \\
    &\subseteq& H_0\otimes H_n+H_n\otimes H_0,
    \end{eqnarray*}
    which follows that ${}^CH_n{}^D\subseteq H_0\wedge H_0=H_1$, a contradiction to our additional assumption ${}^CH_n{}^D\setminus H_1\neq\varnothing$.
\end{itemize}
As a conclusion, $C$ and $D$ must be linked.

\item[(2)]
We know by Lemma \ref{lem:matrixtrivial}(1) that $\X_{1t}-{}^{C_1}\X_{1t}{}^{C_t}$ must be trivial. Therefore, ${}^{C_1}\X_{1t}{}^{C_t}$ would also be non-trivial according to our requirement on $\X_{1t}$. On the other hand, evidently all the entries of ${}^{C_1}\X_{1t}{}^{C_t}$ lie in the subspace ${}^{C_1}H{}^{C_t}$, and thus non-trivial ones would belong to ${}^{C_1}H{}^{C_t}\setminus H_0$. It is concluded that $C_1$ and $C_t$ are linked according to (1).
\end{itemize}
\end{proof}

\subsection{Products of Link-Indecomposable Components}

This subsection is devoted to study link-indecomposable components of a (non-pointed) Hopf algebra.
For the purpose and convenience in this paper, we should probably extend the definition of link relations onto arbitrary pairs of subcoalgebras at first. Of course, it coincides with Definition \ref{def:link0} on simple subcoalgebras.

\begin{definition}\label{def:linkgen}
Let $H$ be a coalgebra, and let $H'$, $H''$ be its subcoalgebras. We say that $H'$ and $H''$ are linked, if both of following conditions hold:
\begin{itemize}
  \item For each $C\in\mathcal{S}$ contained in $H'$, there exists an $D\in\mathcal{S}$ contained in $H''$, such that $C$ and $D$ are linked in $H$ (in the sense of Definition \ref{def:link0});
  \item For each $D\in\mathcal{S}$ contained in $H''$, there exists an $C\in\mathcal{S}$ contained in $H'$, such that $C$ and $D$ are linked in $H$.
\end{itemize}
\end{definition}

\begin{remark}\label{rmk:linkcomp}
Suppose that subcoalgebras $H'$ and $H''$ are linked (in the sense of Definition \ref{def:linkgen}). A direct discussion follows that for any $E\in\mathcal{S}$, $H'\cap H_{(E)}\neq 0$ if and only if $H''\cap H_{(E)}\neq 0$.
In particular, $H'$ is linked with some $E\in\mathcal{S}$, if and only if $H'\subseteq H_{(E)}$.
\end{remark}

We turn to consider link relations for a Hopf algebra $H$. We need to mention at first that when the antipode $S$ of is bijective, it is a bijection on $\mathcal{S}$ and $S(H_0)\subseteq H_0$. Now for each $C\in\mathcal{S}$, denote the link-indecomposable component containing $C$ by $H_{(C)}$. The following result is not hard:

\begin{corollary}\label{cor:antipodecomp}
Let $H$ be a Hopf algebra over a field $\k$ with the bijective antipode $S$. Then for any $C\in\mathcal{S}$, $S(H_{(C)})=H_{S(C)}$.
\end{corollary}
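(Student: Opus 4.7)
The plan is to show first that the antipode $S$ preserves the link relation on simple subcoalgebras, and then to deduce the equality $S(H_{(C)})=H_{(S(C))}$ by applying the inclusion criterion from Remark \ref{rmk:linkcomp}.

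The first and crucial step is the wedge identity
\[ S(A\wedge B)=S(B)\wedge S(A) \]
for any subcoalgebras $A,B$ of $H$. This follows from the fact that $S$ is a coalgebra anti-homomorphism (so $\Delta\circ S=(S\otimes S)\circ\tau\circ\Delta$, with $\tau$ denoting the flip), combined with the characterization $A\wedge B=\Delta^{-1}(A\otimes H+H\otimes B)$ and the bijectivity of $S$. Applying $S$ and $S^{-1}$ to both sides of the inclusion defining direct-linking, one sees that $C+D\subsetneq C\wedge D+D\wedge C$ if and only if $S(C)+S(D)\subsetneq S(C)\wedge S(D)+S(D)\wedge S(C)$. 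Since $S$ already restricts to a bijection on $\mathcal{S}$, it therefore preserves the direct-linking relation on $\mathcal{S}$, and hence the full link relation obtained as the transitive closure.

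Once this is in hand, the remainder is essentially formal. Every simple subcoalgebra of $S(H_{(C)})$ is of the form $S(D)$ for some simple $D\subseteq H_{(C)}$; since $D$ is linked with $C$ in $H$, the preceding step gives that $S(D)$ is linked with $S(C)$ in $H$. Thus $S(H_{(C)})$ and $S(C)$ are linked in the sense of Definition \ref{def:linkgen}, and Remark \ref{rmk:linkcomp} yields $S(H_{(C)})\subseteq H_{(S(C))}$. Applying the same argument to the coalgebra anti-automorphism $S^{-1}$ at the simple subcoalgebra $S(C)$ gives $S^{-1}(H_{(S(C))})\subseteq H_{(C)}$, i.e.~$H_{(S(C))}\subseteq S(H_{(C)})$, and the two inclusions together complete the proof.

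The main (and essentially the only) obstacle is the verification of the wedge identity in the first step; once it is in place, the rest is a direct appeal to Remark \ref{rmk:linkcomp}, which has already done the work of reducing equalities of link-indecomposable components to linkedness statements among the simple subcoalgebras they contain.
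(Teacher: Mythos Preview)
Your proof is correct and follows essentially the same approach as the paper: the paper cites \cite[Lemma 15.2.1]{Rad12} for the fact that $S$ preserves the link relation (which you instead derive from the wedge identity $S(A\wedge B)=S(B)\wedge S(A)$), and then obtains the two inclusions via $S$ and $S^{-1}$ exactly as you do. The only cosmetic difference is that the paper phrases the first inclusion as ``$S(H_{(C)})$ is link-indecomposable and thus contained in $H_{(S(C))}$'' rather than invoking Remark~\ref{rmk:linkcomp}, but this amounts to the same thing.
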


\begin{proof}
It is known by \cite[Lemma 15.2.1]{Rad12} that $C_1,C_2\in\mathcal{S}$ are linked, if and only if simple subcoalgebras $S(C_1)$ and $S(C_2)$ are linked. This fact implies that $S(H_{(C)})$ is link-indecomposable and thus contained in $H_{S(C)}$.

On the other hand, the same reason concerning the coalgebra anti-isomorphism $S^{-1}$ follows that $S^{-1}(H_{S(C)})\subseteq H_{(S^{-1}\circ S(C))}=H_{(C)}$, which means that $H_{S(C)}\subseteq S(H_{(C)})$. As a conclusion, $S(H_{(C)})=H_{S(C)}$ holds.
\end{proof}

Now the products of link-indecomposable components of a Hopf algebra could be considered. With the language of Definition \ref{def:linkgen}, we start our process by describing how products of simple subcoalgebras preserve their link relations:

\begin{lemma}\label{lem:prodlink}
Let $H$ be a Hopf algebra over an algebraically closed field $\k$ with the bijective antipode $S$.
\begin{itemize}
  \item[(1)] Suppose $C_1,C_2,D\in\mathcal{S}$, and that $C_1$ and $C_2$ are directly linked. If
    \begin{equation}\label{eqn:prodcond1}
    ((C_1D)_0+(C_2D)_0)(S(D)+S^{-1}(D))\subseteq H_0
    \end{equation}
    holds, then $C_1D$ and $C_2D$ are linked;
  \item[(2)] Suppose $C,D_1,D_2\in\mathcal{S}$, and that $D_1$ and $D_2$ are directly linked. If
    \begin{equation}\label{eqn:prodcond2}
    (S(C)+S^{-1}(C))((CD_1)_0+(CD_2)_0)\subseteq H_0
    \end{equation}
    holds, then $CD_1$ and $CD_2$ are linked.
\end{itemize}
Here $(C_1D)_0$ denotes the coradical of the subcoalgebra $C_1D$, and so on in conditions (\ref{eqn:prodcond1}) and (\ref{eqn:prodcond2}).
\end{lemma}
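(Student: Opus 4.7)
I focus on (1); part (2) follows by the analogous argument in which right Kronecker products with $\D$ are replaced by left Kronecker products with $\C$, and the identity $S(\C)\C=I$ (Lemma \ref{lem:matrixproduct}(3)) plays the role of $\D S(\D)=I$.

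Since $\k$ is algebraically closed, fix basic multiplicative matrices $\C_1,\C_2,\D$ for $C_1,C_2,D$. The direct link between $C_1$ and $C_2$ yields, via Lemma \ref{lem:linkprim}(2), a non-trivial $(\C_1,\C_2)$-primitive matrix $\X$ (the $(\C_2,\C_1)$-primitive alternative is handled identically). Assembling these into the block multiplicative matrix
$$\G=\begin{pmatrix}\C_1 & \X \\ 0 & \C_2\end{pmatrix},$$
I form its Kronecker product $\G\odot\D$ via Lemma \ref{lem:Kprod}(2): it is multiplicative over $H$, with diagonal blocks $\C_i\odot\D$ lying in $C_iD$ and upper-right block $\X\odot\D$. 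By Proposition \ref{prop:sim}(1), there exist invertible $\k$-matrices $L_1,L_2$ rendering each $L_i(\C_i\odot\D)L_i^{-1}$ block upper triangular with basic diagonal blocks $\E^i_1,\dots,\E^i_{k_i}$ for the simple subcoalgebras $E^i_j\subseteq C_iD$. Conjugation of $\G\odot\D$ by $L:=\mathrm{diag}(L_1,L_2)$ then gives a block upper triangular multiplicative matrix $\widetilde{\G}$ whose basic diagonal blocks list all simple subcoalgebras of $C_1D$ followed by those of $C_2D$, with upper-right block $\widetilde{Y}:=L_1(\X\odot\D)L_2^{-1}$ partitioned into sub-blocks $(Y_{ij})$ between $\E^1_i$ and $\E^2_j$.

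By Proposition \ref{prop:linkprim2}(2) applied to the multiplicative submatrix of $\widetilde{\G}$ restricted to the rows and columns running from $\E^1_i$ through $\E^2_j$, the non-triviality of $Y_{ij}$ forces $E^1_i$ and $E^2_j$ to be linked. It therefore suffices to prove the key claim: \emph{every block row and every block column of $\widetilde{Y}$ contains at least one non-trivial sub-block}. Definition \ref{def:linkgen} would then deliver the linking of $C_1D$ and $C_2D$.

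The verification of this claim is the main obstacle. I would argue by contradiction: suppose the $i_0$-th block row of $\widetilde{Y}$ is entirely trivial; then with $L_{1,i_0}$ denoting the corresponding block row of $L_1$, right multiplication by the scalar matrix $L_2$ shows that $L_{1,i_0}(\X\odot\D)$ has entries in $H_0$. Using $\D S(\D)=I_s$ together with Lemma \ref{lem:matrixproduct}(2) gives
$$L_{1,i_0}(\X\odot\D)(I_{r_2}\odot S(\D))=L_{1,i_0}(\X\odot I_s),$$
whose entries are specific $\k$-linear combinations of the entries of $\X$. To transfer the triviality of the left-hand side across this identity, I would first apply Lemma \ref{lem:matrixtrivial} to refine each trivial block $Y_{i_0 j}$ by its projection under the coradical orthonormal idempotents $e_{E^1_{i_0}}$ and $e_{E^2_j}$, placing the relevant content inside $E^1_{i_0}+E^2_j\subseteq (C_1D)_0+(C_2D)_0$; the hypothesis $((C_1D)_0+(C_2D)_0)S(D)\subseteq H_0$ then forces $L_{1,i_0}(\X\odot I_s)$ to have entries in $H_0$. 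Finally, the linear independence of the entries of $\X$ modulo $H_0$ provided by Proposition \ref{prop:nontrivial}(3) makes the scalar coefficients of $L_{1,i_0}$ all vanish, contradicting the invertibility of $L_1$. The symmetric block-column argument runs in $H^{\mathrm{op}}$, whose antipode is $S^{-1}$, and correspondingly invokes the $S^{-1}(D)$ half of the hypothesis.
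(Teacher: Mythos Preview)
Your overall strategy coincides with the paper's: form the multiplicative matrix $\G\odot\D$, triangularize the diagonal blocks via Proposition~\ref{prop:sim}(1), and show that every block row and every block column of the resulting off-diagonal block $L_1(\X\odot\D)L_2^{-1}$ contains a non-trivial sub-block, so that Proposition~\ref{prop:linkprim2}(2) finishes. The paper in fact phrases this as showing first that the rows (resp.\ columns) of $\X\odot\D$ are linearly independent modulo $H_0$, but this is equivalent to your formulation.

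The one place where your argument diverges, and where it does not quite work as written, is the step invoking Lemma~\ref{lem:matrixtrivial}. That lemma tells you only that $Y_{i_0 j}-{}^{E^1_{i_0}}Y_{i_0 j}{}^{E^2_j}$ is trivial; once you have already assumed $Y_{i_0 j}$ lies in $H_0$, this gives no new information, and in particular it does not locate the entries inside $E^1_{i_0}+E^2_j$. The paper avoids this entirely with a simpler observation: the entries of $\X\odot\D$ lie in $C_1D\wedge C_2D$ (indeed in the subcoalgebra spanned by the entries of $\G\odot\D$, whose simple subcoalgebras are exactly the $E^1_i,E^2_j$). Hence any $\k$-linear combination of these entries that falls into $H_0$ automatically lies in $(C_1D)_0+(C_2D)_0$, and the hypothesis $((C_1D)_0+(C_2D)_0)S(D)\subseteq H_0$ applies directly to $L_{1,i_0}(\X\odot\D)(I_{r_2}\odot S(\D))$. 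No idempotent refinement is needed. For the column direction the paper works with transposes and the identity $\D^{\mathrm T}S^{-1}(\D)^{\mathrm T}=I$ from Lemma~\ref{lem:matrixproduct}(3), which is the concrete content of your ``pass to $H^{\mathrm{op}}$''; and for part~(2) the paper actually does pass to $H^{\mathrm{op}}$ rather than redoing the argument with $\odot'$.
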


\begin{proof}
\begin{itemize}
\item[(1)]
Assume that $C_1\wedge C_2\supsetneq C_1+C_2$ without the loss of generality. Suppose $\C_1,\C_2,\D$ are basic multiplicative matrices of $C_1,C_2,D$ with sizes $r_1,r_2,s$, respectively. Then by Lemma \ref{lem:linkprim}(2), there exists a non-trivial $(\C_1,\C_2)$-primitive matrix $\X$. Define a square matrix
$$\G:=
  \left(\begin{array}{cc}
    \C_1 & \X  \\  0 & \C_2
  \end{array}\right)\odot\D=
  \left(\begin{array}{cc}
    \C_1\odot\D & \X\odot\D  \\  0 & \C_2\odot\D
  \end{array}\right).$$
Note that according to Lemma \ref{lem:Kprod}(2), matrices $\C_1\odot\D$, $\C_2\odot\D$ and $\G$ are all multiplicative.

Now we try to observe properties of the $r_1s\times r_2s$ matrix $\X\odot\D$ in details. Of course, each row of $\X\odot\D$ is a vector in $H^{r_1s\times 1}$, which denotes the space of all row vectors with $r_1s$ entries from $H$. Moreover, we might regard these rows as vectors in $(H/H_0)^{r_1s\times 1}$ with entries from the quotient space $H/H_0$. Similar conventions are made for column vectors and spaces $H^{1\times r_2s}$ and $(H/H_0)^{1\times r_2s}$. We aim to show that the following properties (i) and (ii) for the matrix $\X\odot\D$ both hold:

\begin{center}
  \begin{tabular}{cl}
    (i) & The set of all its row vectors is linearly independent over $H/H_0$;  \\
    (ii) & The set of all its column vectors is linearly independent over $H/H_0$.
  \end{tabular}
\end{center}

At first we try to show that $\X\odot\D$ has property (i). Clearly, all the entries of $\X\odot\D$ must belong to $C_1D\wedge C_2D$, and thus trivial ones among them would belong to $(C_1D)_0+(C_2D)_0$.

Assume on the contrary that (i) does not hold for $\X\odot\D$, or equivalently, there is an non-zero $1\times r_1s$ matrix $P$ over $\k$ such that $P(\X\odot\D)$ is trivial as a row vector in $H^{1\times r_2s}$. Moreover, clearly $\X\odot\D$ is actually a matrix over the subcoalgebra $C_1D\wedge C_2D$. Thus all entries of the trivial vector $P(\X\odot\D)$ would belong to $(C_1D)_0+(C_2D)_0$.

However, we could compute by Lemma \ref{lem:matrixproduct}(2) that
$$P(\X\odot\D)(I_{r_2}\odot S(\D))=P(\X\odot\D S(\D))=P(\X\odot I_s),$$
whose entries all lie in $((C_1D)_0+(C_2D)_0)S(D)\subseteq H_0$ due to our condition (\ref{eqn:prodcond1}). This is a contradiction to the fact that $P(\X\odot I_s)$ is a non-trivial row vector, because $\X\odot I_s$ must have property (i) by the definition of our Kronecker product $\odot$ as well as Proposition \ref{prop:nontrivial}(3).

On the other hand, a similar argument would follow that the matrix $(\X\odot\D)^\mathrm{T}$ has property (i) as well. Specifically, for any non-zero $1\times r_2s$ matrix $Q$ over $\k$, we could compute by Lemma \ref{lem:matrixproduct} again to know that:
\begin{eqnarray*}
Q(\X\odot\D)^\mathrm{T}(I_{r_1}\odot S^{-1}(\D))^\mathrm{T}
&=& Q(\X^\mathrm{T}\odot\D^\mathrm{T})(I_{r_1}\odot S^{-1}(\D)^\mathrm{T})  \\
&=& Q(\X^\mathrm{T}\odot\D^\mathrm{T} S^{-1}(\D)^\mathrm{T})  \\
&=& Q(\X^\mathrm{T}\odot I_s),
\end{eqnarray*}
whose entries would all lie in $((C_1D)_0+(C_2D)_0)S^{-1}(D)\subseteq H_0$ by the condition (\ref{eqn:prodcond1}). Of course, this is equivalent to say $\X\odot\D$ has property (ii).

Next we turn to deal with $\G$. It is followed by Proposition \ref{prop:sim}(1) that there exist invertible matrices $L_1$ and $L_2$ over $\k$, such that
\begin{align*}
& L_1(\C_1\odot\D)L_1^{-1}=
  \left(\begin{array}{cccc}
    \E_1 & \Y_{12} & \cdots & \Y_{1t}  \\
    0 & \E_2 & \cdots & \Y_{2t}  \\
    \vdots & \vdots & \ddots & \vdots  \\
    0 & 0 & \cdots & \E_t
  \end{array}\right)\;\;\;\;\text{and}  \\
& L_2(\C_2\odot\D)L_2^{-1}=
  \left(\begin{array}{cccc}
    \F_1 & \Z_{12} & \cdots & \Z_{1u}  \\
    0 & \F_2 & \cdots & \Z_{2u}  \\
    \vdots & \vdots & \ddots & \vdots  \\
    0 & 0 & \cdots & \F_u
  \end{array}\right)
\end{align*}
both hold, where $\E_1,\E_2,\cdots,\E_t$ and $\F_1,\F_2,\cdots,\F_u$ are basic multiplicative matrices over $H$. Meanwhile we denote
$$L_1(\X\odot\D)L_2^{-1}=
  \left(\begin{array}{cccc}
    \X_{11} & \X_{12} & \cdots & \X_{1u}  \\
    \X_{12} & \X_{22} & \cdots & \X_{2u}  \\
    \vdots & \vdots & \ddots & \vdots  \\
    \X_{t1} & \X_{t2} & \cdots & \X_{tu}
  \end{array}\right),$$
where for each $1\leq i\leq t$ and $1\leq j\leq u$, the matrix $\X_{ij}$ has the same number of rows with $\E_i$, and has the same number of columns with $\F_j$. These notations are concluded as follows:
$$\left(\begin{array}{cc}  L_1 & 0  \\  0 & L_2  \end{array}\right)
  \G
  \left(\begin{array}{cc}  L_1^{-1} & 0  \\  0 & L_2^{-1}  \end{array}\right)=
  \left(\begin{array}{cccccc}
    \E_1 & \cdots & \Y_{1t} & \X_{11} & \cdots & \X_{1u}  \\
    \vdots & \ddots & \vdots & \vdots & \ddots & \vdots  \\
    0 & \cdots & \E_t & \X_{t1} & \cdots & \X_{tu}  \\
     &  &  & \F_1 & \cdots & \Z_{1u}  \\
     & 0 &  & \vdots & \ddots & \vdots  \\
     &  &  & 0 & \cdots & \F_u
  \end{array}\right)$$
as multiplicative matrices.

Recall that we has shown that $\X\odot\D$ has properties (i) and (ii). It is then not hard to know that (i) and (ii) both hold for $L_1(\X\odot\D)L_2^{-1}$ as well. Therefore, we could obtain following two facts:
\begin{itemize}
  \item[(I)] For each $1\leq i\leq t$, there is some $1\leq j\leq u$ such that $\X_{ij}$ is non-trivial. Meanwhile,
  \item[(II)] For each $1\leq j\leq u$, there is some $1\leq i\leq t$ such that $\X_{ij}$ is non-trivial.
\end{itemize}

Finally according to Proposition \ref{prop:linkprim2}(2), the non-trivially of $\X_{ij}$ implies that the simple subcoalgebras corresponding to $\E_i$ and $\F_j$ are linked. As a conclusion, $C_1D$ and $C_2D$ are linked in the sense of Definition \ref{def:linkgen}.

\item[(2)]
Consider the opposite Hopf algebra $H^\mathrm{op}$ with multiplication $\cdot^\mathrm{op}$ and antipode $S^{-1}$, where our condition (\ref{eqn:prodcond2}) becomes
$$((D_1\cdot^\mathrm{op}C)_0+(D_2\cdot^\mathrm{op}C)_0)\cdot^\mathrm{op}(S^{-1}(C)+S(C))
  \subseteq H_0.$$
Of course $D_1$ and $D_2$ are also directly linked in $H^\mathrm{op}$, and thus subcoalgebras $D_1\cdot^\mathrm{op}C$ and $D_2\cdot^\mathrm{op}C$ are linked according to (1). This is exactly our desired result.
\end{itemize}
\end{proof}

With Lemma \ref{lem:prodlink}, a sufficient condition for $H_{(1)}$ to be a Hopf subalgebra could be given as follows.

\begin{proposition}\label{prop:H(1)sub}
Let $H$ be a Hopf algebra over an algebraically closed field $\k$ with the bijective antipode $S$. If \begin{equation}\label{eqn:H(1)prodcond}
{(H_{(1)})_0}^3\subseteq H_0
\end{equation}
holds, then $H_{(1)}$ is a Hopf subalgebra.
\end{proposition}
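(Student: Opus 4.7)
The plan is to check the three conditions defining a Hopf subalgebra. That $H_{(1)}$ is a subcoalgebra is automatic, and antipode stability follows immediately from Corollary~\ref{cor:antipodecomp}: since $S(\unit)=\unit$, the simple subcoalgebra $\k\unit$ is fixed by $S$, whence $S(H_{(1)})=H_{(1)}$, and applying $S^{-1}$ gives $S^{-1}(H_{(1)})=H_{(1)}$ as well. So only closure under multiplication requires work.

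The main step is to show $CD\subseteq H_{(1)}$ for any simple subcoalgebras $C,D\subseteq H_{(1)}$. Since $C$ and $\k\unit$ are linked inside $H_{(1)}$, fix a chain $C=E_0,E_1,\ldots,E_n=\k\unit$ of simple subcoalgebras in $H_{(1)}$ with each consecutive pair directly linked. I would apply Lemma~\ref{lem:prodlink}(1) to each pair $(E_i,E_{i+1})$ with the common right factor $D$. Its hypothesis
\[
((E_iD)_0+(E_{i+1}D)_0)(S(D)+S^{-1}(D))\subseteq H_0
\]
is where the assumption $(H_{(1)})_0^{\,3}\subseteq H_0$ enters: the simples $E_i,E_{i+1},D$ all lie in $(H_{(1)})_0$, so $(E_iD)_0\subseteq E_iD\subseteq (H_{(1)})_0^{\,2}$, while antipode stability places $S(D),S^{-1}(D)\subseteq (H_{(1)})_0$; the product therefore lies in $(H_{(1)})_0^{\,3}\subseteq H_0$. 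The lemma then yields that $E_iD$ and $E_{i+1}D$ are linked in the sense of Definition~\ref{def:linkgen}, so chaining along $i=0,\ldots,n-1$ connects $CD=E_0D$ to $D=E_nD$. By Remark~\ref{rmk:linkcomp} every simple subcoalgebra of $CD$ then shares its link-component with some simple subcoalgebra of $D$, and hence lies in $H_{(1)}$. Because $H=\bigoplus_{[E]}H_{(E)}$ as coalgebras, this forces the subcoalgebra $CD$ itself into $H_{(1)}$.

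To extend from simple-times-simple to all of $H_{(1)}\cdot H_{(1)}$, I would induct along the coradical filtration $H_{(1)}=\bigcup_n(H_{(1)})_n$, making use of two elementary observations: for subcoalgebras $K,L,M$ in a bialgebra, $(K\wedge L)M\subseteq(KM)\wedge(LM)$ and $K(L\wedge M)\subseteq(KL)\wedge(KM)$; and since $H_{(1)}$ is a direct coalgebra summand of $H$, any wedge (in $H$) of subcoalgebras of $H_{(1)}$ remains inside $H_{(1)}$. Combined with $(H_{(1)})_{n+1}\subseteq (H_{(1)})_0\wedge(H_{(1)})_n$, a double induction on the filtration degrees of the two factors reduces $(H_{(1)})_n\cdot (H_{(1)})_m\subseteq H_{(1)}$ to the base case $n=m=0$, which is the simple-times-simple statement just established (summed over simples). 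One could alternatively handle the induction on the right-hand factor via Lemma~\ref{lem:prodlink}(2).

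The principal obstacle is the uniform verification of the hypothesis of Lemma~\ref{lem:prodlink}(1) along the entire chain; the cubic bound $(H_{(1)})_0^{\,3}\subseteq H_0$ is precisely calibrated so that $(E_iD)_0\cdot S(D)$ lands in $H_0$, and nothing weaker is visible that would still support the argument. Once the simple-by-simple case is in hand, the extension to all of $H_{(1)}\cdot H_{(1)}$ via wedges is essentially bookkeeping.
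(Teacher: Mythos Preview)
Your proof is correct and follows essentially the same route as the paper: antipode stability via Corollary~\ref{cor:antipodecomp}, then closure under multiplication by chaining Lemma~\ref{lem:prodlink} along a link path and using the cubic hypothesis to verify its side condition. The only real difference is in the passage from the simple-by-simple case to all of $H_{(1)}\cdot H_{(1)}$: the paper bypasses your wedge induction by observing that, since $\k$ is algebraically closed, multiplication $H_{(1)}\otimes H_{(1)}\to H_{(1)}^2$ is a coalgebra surjection, so $(H_{(1)}^2)_0\subseteq(H_{(1)})_0^2$; thus every simple subcoalgebra of $H_{(1)}^2$ already sits in some $CD$ with $C,D$ simple in $H_{(1)}$, and Remark~\ref{rmk:linkcomp} finishes at once. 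Your wedge-filtration induction achieves the same end more explicitly and is perfectly valid, just a bit longer.
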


\begin{proof}
Clearly, the unit element $1$ belongs to the subcoalgebra $H_{(1)}$, and we know by Corollary \ref{cor:antipodecomp} that $S(H_{(1)})\subseteq H_{(1)}$ holds as well. It remains to prove that $H_{(1)}$ is closed under the multiplication, which is written as ${H_{(1)}}^2\subseteq H_{(1)}$. However by Remark \ref{rmk:linkcomp}, we only need to show that each simple subcoalgebra $E$ of ${H_{(1)}}^2$ is linked with $\k1$.

In fact, it is known that $(H_{(1)}\otimes H_{(1)})_0=(H_{(1)})_0\otimes(H_{(1)})_0$, since $\k$ is algebraically closed (\cite[Corollary 4.1.8]{Rad12} for example). Consider the multiplication on $H$ as an epimorphism $H_{(1)}\otimes H_{(1)}\rightarrow {H_{(1)}}^2$ of coalgebras, and it is followed by \cite[Corollary 5.3.5]{Mon93} that
$$({H_{(1)}}^2)_0\subseteq{(H_{(1)})_0}^2
  =\sum\limits_{\substack{C\in\mathcal{S}  \\  C\subseteq H_{(1)}}}
   \sum\limits_{\substack{D\in\mathcal{S}  \\  D\subseteq H_{(1)}}} CD.$$
Therefore, each simple subcoalgebra $E$ of ${H_{(1)}}^2$ must be contained in some subcoalgebra $CD$, where $C$ and $D$ are both linked with $\k1$.

Note that condition (\ref{eqn:H(1)prodcond}) and the fact $S(H_{(1)})\subseteq H_{(1)}$ would imply that any triples of simple subcoalgebras of $H_{(1)}$ would satisfy conditions (\ref{eqn:prodcond1}) as well as (\ref{eqn:prodcond2}). As a consequence, for any $C,D\in\mathcal{S}$ linked with $\k1$, we find that $CD$ is linked with $(\k1)^2=\k1$ in final. It is concluded that ${H_{(1)}}^2$ and $\k1$ are linked, and the desired result is obtained.
\end{proof}

In order to study the products for arbitrary link-indecomposable components $H_{(C)}$ and $H_{(D)}$, we might require a stronger condition for $H$. Recall in the literature that a finite-dimensional Hopf algebra $H$ is said to have the dual Chevalley property, if its coradical $H_0$ is a Hopf subalgebra. In this paper, we also use the term \textit{dual Chevalley property} to indicate a Hopf algebra $H$ with its coradical $H_0$ as a Hopf subalgebra, even if $H$ is infinite-dimensional.

Evidently, when the antipode $S$ is bijective, the dual Chevalley property is equivalent to the requirement that ${H_0}^2\subseteq H_0$. On the other hand, the bijectivity of $S$ is in fact a consequence of the dual Chevalley property:

\begin{lemma}(\cite[Corollary 3.6]{Rad77})
Let $H$ be a Hopf algebra. Suppose that $H$ has the dual Chevalley property, which means that its coradical $H_0$ is a Hopf subalgebra. Then the antipode $S$ is bijective.
\end{lemma}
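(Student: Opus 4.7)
The plan is to reduce the bijectivity of $S$ on all of $H$ to the bijectivity of its restriction $S_0 := S|_{H_0}$, and then to invoke a convolution-invertibility argument. First, since the dual Chevalley property means $H_0$ is a Hopf subalgebra, $H_0$ is a cosemisimple Hopf algebra in its own right. By a classical theorem (due to Larson), the antipode of a cosemisimple Hopf algebra is bijective, so $S_0$ is a bijection of $H_0$ with inverse $S_0^{-1}$, which is itself an anti-coalgebra, anti-algebra automorphism.

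To extend this to $H$, I would produce an \emph{opposite antipode} $\overline{S} : H \to H$ satisfying
\[
\sum \overline{S}(h_{(2)})\, h_{(1)} \;=\; \varepsilon(h)1 \;=\; \sum h_{(2)}\, \overline{S}(h_{(1)})
\qquad (h \in H),
\]
since the existence of such an $\overline{S}$ is well known to force $S \circ \overline{S} = \overline{S} \circ S = \id_H$ via a standard Sweedler-style calculation. Equivalently, I must show that $\id_H$, viewed as an element of $\Hom_{\k}(H^{\mathrm{cop}}, H)$, is convolution invertible.

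The key input is the classical Sweedler--Takeuchi lemma: a linear map $f : C \to A$ from a coalgebra $C$ to an algebra $A$ is convolution invertible if and only if its restriction $f|_{C_0}$ to the coradical is. Applied to $\id_H : H^{\mathrm{cop}} \to H$, it then suffices to invert the inclusion $H_0 \hookrightarrow H$ inside $\Hom_{\k}(H_0^{\mathrm{cop}}, H)$. The natural candidate is the map $h \mapsto S_0^{-1}(h)$ landing in $H_0 \subseteq H$, which is precisely the antipode of the Hopf algebra $H_0^{\mathrm{cop}}$; a direct check using the defining identity of $S_0^{-1}$ confirms that it convolves with the inclusion to $\varepsilon(h)1$.

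The main obstacle is the Sweedler--Takeuchi lemma itself, which is proved by induction along the coradical filtration $H_0 \subseteq H_1 \subseteq \cdots$ and requires care here since each $H_n$ may be infinite-dimensional, ruling out any finite-dimensional shortcut; however, once the base case on $H_0$ is in hand, the inductive extension is formal, so the entire argument ultimately rests on Larson's theorem combined with this coalgebraic convolution lemma.
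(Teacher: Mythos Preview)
Your argument is correct; this is essentially the standard proof and is in substance Radford's own argument behind \cite[Corollary 3.6]{Rad77}. Note, however, that the paper itself supplies no proof at all for this lemma: it merely records the statement and cites Radford. So there is nothing in the paper to compare against beyond the citation, and your write-up simply fills in what the paper leaves to the reference. One minor remark: the paper already invokes \cite[Theorem 3.3]{Lar71} elsewhere for the bijectivity of the antipode on a cosemisimple Hopf algebra, so your appeal to Larson's theorem is consistent with the paper's toolkit.
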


The following direct corollary is due to a similar argument as the end of the proof of Proposition \ref{prop:H(1)sub}. Namely, the dual Chevalley property follows a fact that any triples in $\mathcal{S}$ would satisfy conditions (\ref{eqn:prodcond1}) and (\ref{eqn:prodcond2}), since the antipode is bijective in this case.

\begin{corollary}\label{cor:prodlink}
Let $H$ be a Hopf algebra over an algebraically closed field $\k$ with the dual Chevalley property. Suppose $C_1,C_2,D_1,D_2\in\mathcal{S}$. If $C_1,C_2$ are linked, and $D_1,D_2$ are also linked, then $C_1D_1$ and $C_2D_2$ are linked.
\end{corollary}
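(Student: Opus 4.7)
The plan is to reduce this to Lemma \ref{lem:prodlink} by showing that, under the dual Chevalley hypothesis, the side conditions (\ref{eqn:prodcond1}) and (\ref{eqn:prodcond2}) hold automatically for every triple in $\mathcal{S}$, and then to iterate along chains of direct links. First I would observe that each simple subcoalgebra lies in $H_0$ by definition of the coradical; since $H_0$ is a Hopf subalgebra with bijective antipode (using the cited Radford result), we have $CD\subseteq H_0$ and $S^{\pm 1}(D)\subseteq H_0$ for all $C,D\in\mathcal{S}$. Consequently $(CD)_0\subseteq CD\subseteq H_0$, and both $((C_1D)_0+(C_2D)_0)(S(D)+S^{-1}(D))$ and $(S(C)+S^{-1}(C))((CD_1)_0+(CD_2)_0)$ lie in $H_0\cdot H_0\subseteq H_0$. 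Thus Lemma \ref{lem:prodlink}(1)(2) applies to arbitrary triples of simple subcoalgebras without any further hypothesis.

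Next, invoking Definition \ref{def:link0}(2) for the hypothesis that $C_1,C_2$ are linked, I would fix a chain $C_1=E_0,E_1,\dots,E_m=C_2$ of pairwise directly linked simples. Applying Lemma \ref{lem:prodlink}(1) at each step gives that the subcoalgebras $E_iD_1$ and $E_{i+1}D_1$ are linked in the sense of Definition \ref{def:linkgen}. By Remark \ref{rmk:linkcomp}, being linked (in the generalized sense) is equivalent to intersecting exactly the same set of link-indecomposable components of $H$, and this relation is manifestly transitive; hence $C_1D_1$ and $C_2D_1$ are linked. A symmetric chain argument applied to a chain $D_1=F_0,F_1,\dots,F_n=D_2$ of direct links, using Lemma \ref{lem:prodlink}(2) with $C=C_2$ at each step, yields that $C_2D_1$ and $C_2D_2$ are linked. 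Chaining these two conclusions via transitivity once more produces the desired link between $C_1D_1$ and $C_2D_2$.

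The only point requiring mild care is the transitivity of the generalized link relation on subcoalgebras: rather than trying to compose it directly from Definition \ref{def:linkgen}, I would invoke Remark \ref{rmk:linkcomp} so that two linked subcoalgebras are characterized by the set of link-indecomposable components of $H$ they meet, making transitivity tautological. Beyond that, the proof is essentially bookkeeping: each individual step is already contained in Lemma \ref{lem:prodlink}, and the dual Chevalley property is used for nothing more than freeing us from the auxiliary containment conditions on coradicals and antipode images.
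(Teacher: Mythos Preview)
Your proposal is correct and follows essentially the same route as the paper. The paper is terse here, merely noting that under the dual Chevalley property every triple in $\mathcal{S}$ automatically satisfies (\ref{eqn:prodcond1}) and (\ref{eqn:prodcond2}) (since $H_0$ is a Hopf subalgebra with bijective antipode), and then invoking Lemma \ref{lem:prodlink} along chains of direct links exactly as in the end of the proof of Proposition \ref{prop:H(1)sub}; your write-up spells out the same two-step chain argument $(C_1D_1\;\text{linked to}\;C_2D_1\;\text{linked to}\;C_2D_2)$ explicitly and handles the transitivity of the generalized link relation via Remark \ref{rmk:linkcomp}, which is precisely the intended reading.
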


Our main result could be a generalized version of \cite[Theorem 3.2(1)]{Mon95}:

\begin{proposition}\label{thm:DCP}
Let $H$ be a Hopf algebra over an algebraically closed field $\k$ with the dual Chevalley property. Then
\begin{itemize}
  \item[(1)] For any $C,D\in\mathcal{S}$,
    $H_{(C)}H_{(D)}\subseteq\sum\limits_{E\in\mathcal{S},\;E\subseteq CD}H_{(E)}$;
  \item[(2)] $H_{(1)}$ is a Hopf subalgebra.
\end{itemize}
\end{proposition}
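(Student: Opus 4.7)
The plan is to derive (1) from Corollary \ref{cor:prodlink} (which handles linkage under products of linked simple pairs) combined with the standard fact that, over an algebraically closed field, $(AB)_0$ is controlled by $A_0B_0$; part (2) will then drop out as a special case, with Proposition \ref{prop:H(1)sub} available as an alternative.

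For (1), I would argue as follows. The product $K:=H_{(C)}H_{(D)}$ is a subcoalgebra of $H$, being the image of the coalgebra map $m\colon H_{(C)}\otimes H_{(D)}\to H$, and by Lemma \ref{lem:linkdecomp} it splits as the direct sum of its own link-indecomposable components, each contained in some $H_{(E)}$ of $H$ (since simples linked in $K$ are a fortiori linked in $H$). Hence it suffices to show that every simple subcoalgebra $F\subseteq K$ is linked in $H$ to some simple inside $CD$. Now the coradical $K_0$ sits inside $(H_{(C)})_0(H_{(D)})_0$ by the same application of \cite[Corollary 5.3.5]{Mon93} that appears in the proof of Proposition \ref{prop:H(1)sub}, using algebraic closure of $\k$. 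Under dual Chevalley this product is a cosemisimple subcoalgebra of $H_0$, and it equals $\sum_{C',D'}C'D'$ over simples $C'\subseteq H_{(C)}$ and $D'\subseteq H_{(D)}$. The key localisation step is that a simple subcoalgebra of a cosemisimple coalgebra presented as a sum of subcoalgebras must actually lie inside one of those summands, so $F\subseteq C'D'$ for some pair. Since $C',C$ are linked and $D',D$ are linked, Corollary \ref{cor:prodlink} gives that $CD$ and $C'D'$ are linked in the sense of Definition \ref{def:linkgen}, so $F$ is linked to some simple $E\subseteq CD$, whence $F\subseteq H_{(E)}$, as required.

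Part (2) is essentially immediate from (1): specialising to $C=D=\k 1$ gives $H_{(1)}H_{(1)}\subseteq\sum_{E\subseteq\k 1}H_{(E)}=H_{(1)}$, and combined with $1\in H_{(1)}$ and $S(H_{(1)})=H_{(1)}$ from Corollary \ref{cor:antipodecomp} this shows $H_{(1)}$ is a Hopf subalgebra. Equivalently, dual Chevalley supplies $(H_{(1)})_0^3\subseteq H_0^3\subseteq H_0$, so Proposition \ref{prop:H(1)sub} applies directly. The only delicate point in the whole argument is the localisation $F\subseteq C'D'$ for a single pair $(C',D')$, which rests on cosemisimplicity of $(H_{(C)})_0(H_{(D)})_0$ (furnished by dual Chevalley) together with the algebraic closure of $\k$; all of the genuine link-theoretic content has already been absorbed into Corollary \ref{cor:prodlink}, so the remainder is formal bookkeeping.
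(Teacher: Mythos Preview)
Your proposal is correct and follows essentially the same route as the paper's proof: both reduce (1) to showing every simple $F\subseteq H_{(C)}H_{(D)}$ lies in some $C'D'$ (via the coradical bound from \cite[Corollary 5.3.5]{Mon93} and cosemisimplicity under dual Chevalley), then invoke Corollary \ref{cor:prodlink}; for (2) the paper cites Proposition \ref{prop:H(1)sub} directly, which you list as your alternative, while your primary derivation from (1) with $C=D=\k1$ is equally valid and the paper's argument for Proposition \ref{prop:H(1)sub} itself proceeds in just that way.
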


\begin{proof}
\begin{itemize}
\item[(1)]
The proof is basically similar to which of Proposition \ref{prop:H(1)sub}. By Remark \ref{rmk:linkcomp} as well, it is sufficient to show that each simple subcoalgebra $E'$ of $H_{(C)}H_{(D)}$ is linked with some $E\in\mathcal{S}$ contained in $CD$.

The same argument follows $(H_{(C)}H_{(D)})_0\subseteq (H_{(C)})_0(H_{(D)})_0$ at first, though in fact the dual Chevalley property implies
$$(H_{(C)}H_{(D)})_0=(H_{(C)})_0(H_{(D)})_0
  =\sum\limits_{\substack{C'\in\mathcal{S}  \\  C'\subseteq H_{(C)}}}
   \sum\limits_{\substack{D'\in\mathcal{S}  \\  D'\subseteq H_{(D)}}} C'D'.$$
Therefore, each simple subcoalgebra $E'$ of $H_{(C)}H_{(D)}$ must be contained in some $C'D'$, where $C',C$ are linked, and $D',D$ are linked.

Note that $CD$ is linked with this $C'D'$, according to Corollary \ref{cor:prodlink}. As a consequence, we could know each simple subcoalgebra $E'$ of $H_{(C)}H_{(D)}$ is linked with some $E\in\mathcal{S}$ contained in $CD$, and the desired result is obtained.

\item[(2)]
This is a particular case of Proposition \ref{prop:H(1)sub}, as the condition (\ref{eqn:H(1)prodcond}) would be followed by the dual Chevalley property of $H$.
\end{itemize}
\end{proof}

\begin{remark}
In fact, the assumption that $\k$ is algebraically closed is not necessary. This would be shown as Theorem \ref{thm:DCPnew} in the next section.
\end{remark}

\section{Further Presentations of Link-Indecomposable Components}\label{section4}

In this section, we try to find much more explicit presentations of the link-indecomposable components for a Hopf algebra $H$ with the dual Chevalley property. Our main idea is to identify $H$ with the smash coproduct $H_0\blackltimes H/{H_0}^+H$ as left $H_0$-module coalgebras. However, the link-indecomposable decomposition of $H_0\blackltimes H/{H_0}^+H$ is in fact determined by a semisimple decomposition of $H_0$ as a sense of relative $(H_0,(H_{(1)})_0)$-Hopf bi(co)modules. This process would be stated in the following subsections.

\subsection{Relative Hopf Bi(co)modules over Cosemisimple Hopf Algebras}

We begin with a cosemisimple Hopf algebra $J$ over an arbitrary field $\k$, whose antipode is then bijective according to \cite[Theorem 3.3]{Lar71}. Suppose that $K$ is a Hopf subalgebra of $J$. Clearly $K$ is also cosemisimple, and it is known that $J$ is faithfully flat (or a projective generator) over $K$ by \cite[Theorem 2.1]{Chi14}.

Let ${}^{J}\!\mathcal{M}_K$ (resp. $\mathcal{M}^J_K$) denote the category consisting of \textit{relative $(J,K)$-Hopf modules} which are right $K$-modules as well as left (resp. right) $J$-comodules. One could see \cite[Section 1]{MW94} for details of the definition. Now consider the $\k$-linear abelian category ${}^{J}\!\mathcal{M}^J_K$ consisting of $(J,J)$-bicomodules $M$ equipped with right $K$-module structure which are Hopf modules both in ${}^{J}\!\mathcal{M}_K$ and $\mathcal{M}^J_K$.

\begin{proposition}\label{prop:Hopfmodss}
With notations above, ${}^{J}\!\mathcal{M}^J_K$ is semisimple.
\end{proposition}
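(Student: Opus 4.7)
The plan is to reduce the semisimplicity of $^J\mathcal{M}^J_K$ to that of an auxiliary bicomodule category via a Schneider--Takeuchi-type equivalence, and then verify semisimplicity of the auxiliary category by a left $J$-isotypic decomposition.

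Set $Q := J/K^+J$, a quotient coalgebra of $J$ (hence cosemisimple, as a quotient of the cosemisimple $J$). The quoted \cite[Theorem 2.1]{Chi14} guarantees that $J$ is faithfully flat over $K$, so the standard Schneider--Takeuchi--Masuoka equivalence applies:
$$\mathcal{M}^J_K \;\simeq\; \mathcal{M}^Q,\qquad M\mapsto M/MK^+,\qquad V\mapsto V\square_Q J.$$
Both functors carry an additional left $J$-coaction along naturally: $M/MK^+$ inherits it because the right $K$-action is a morphism of left $J$-comodules, and $V\square_Q J$ acquires it from the $J$-factor via $\Delta_J$. Therefore the equivalence refines to
$$^J\mathcal{M}^J_K \;\simeq\; {}^J\mathcal{M}^Q,$$
the right-hand side denoting $(J,Q)$-bicomodules.

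To see that ${}^J\mathcal{M}^Q$ is semisimple, I would use a left $J$-isotypic decomposition. Given $N \in {}^J\mathcal{M}^Q$, decompose $N = \bigoplus_{S} N_S$ into isotypic components as a left $J$-comodule. This decomposition exists because ${}^J\mathcal{M} = \mathcal{M}^{J^{\mathrm{cop}}}$ is semisimple, as $J^{\mathrm{cop}}$ is a cosemisimple Hopf algebra. Since the right $Q$-coaction $\rho_R : N \to N \otimes Q$ is a morphism of left $J$-comodules, it preserves each $N_S$, so every $N_S$ is a subobject of $N$ in ${}^J\mathcal{M}^Q$. The multiplicity space attached to $N_S$ inherits a compatible right $Q$-comodule structure, and by cosemisimplicity of $Q$ this multiplicity space decomposes into simple $Q$-comodules; that decomposition lifts to a decomposition of $N_S$ into simple objects of ${}^J\mathcal{M}^Q$.

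The most delicate point is the upgrade of the Schneider--Takeuchi equivalence to the bicomodule setting: verifying that the left $J$-coaction descends cleanly through $M \mapsto M/MK^+$ and through $V \mapsto V\square_Q J$ is a routine compatibility check, since that coaction commutes with the right $K$-action and with the right $J$-coaction by the very axioms defining $^J\mathcal{M}^J_K$, but it does demand some bookkeeping. A robust fallback, should the upgraded equivalence prove awkward, is a Maschke-style averaging: take a splitting supplied in the semisimple category $\mathcal{M}^J_K$ by the one-sided equivalence above, and modify it via the Haar integral of $J$ into one that is additionally left $J$-colinear, noting that right $K$-linearity and right $J$-colinearity both survive the averaging because they commute with the left $J$-coaction.
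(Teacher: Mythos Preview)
Your main route via the Schneider--Takeuchi equivalence is genuinely different from the paper's proof. The paper argues directly by Maschke averaging: given a short exact sequence in ${}^{J}\!\mathcal{M}^J_K$, it first splits it $K$-linearly (using that $N$ is $K$-projective, \cite[Corollary 2.9]{MW94}), and then corrects this $K$-linear retraction $g$ into a morphism in ${}^{J}\!\mathcal{M}^J_K$ using the Haar functional $\phi:J\to\k$ and Doi's two-sided retraction formula
$r_L(a\otimes l\otimes b)=\sum\phi(l_{(-1)}S^{-1}(a))\,l_{(0)}\,\phi(l_{(1)}S(b))$,
so that $r_L(\id\otimes g\otimes\id)\rho_M$ splits the sequence. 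Your declared fallback is essentially this same mechanism (you start from a splitting in $\mathcal{M}^J_K$ and average only on the left, but the engine is the Haar integral in both cases).

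Your primary argument, however, has a gap over an arbitrary base field $\k$, which is the setting of the proposition. After passing to ${}^{J}\!\mathcal{M}^Q$ and taking the left $J$-isotypic piece $N_S$, the multiplicity space is not merely a right $Q$-comodule: it is a $D$-linear right $Q$-comodule for the division algebra $D=\End_{{}^J\!\mathcal{M}}(S)$, and $N_S\cong S\otimes_D M_S$. Decomposing $M_S$ only as a $Q$-comodule need not yield $D$-stable summands, so the lift back to $N_S$ is not automatic. The repair is to note that $D$ is \emph{separable} over $\k$ --- equivalently, each simple subcoalgebra of $J$ is coseparable --- which follows because the Haar integral makes $J\otimes_\k\overline{\k}$ cosemisimple; then ${}_D\mathcal{M}^Q$ is semisimple and the lift goes through. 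More cleanly, one can skip multiplicity spaces and observe ${}^{J}\!\mathcal{M}^Q\simeq\mathcal{M}^{J^{\mathrm{cop}}\otimes Q}$, with $J^{\mathrm{cop}}\otimes Q$ cosemisimple because $J$ is coseparable. (Two minor slips: for $\mathcal{M}^J_K$ the relevant quotient is $J/JK^+$ rather than $J/K^+J$, and the left $J$-coaction on $V\square_Q J$ comes from $V$, not from the $J$-factor.)
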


\begin{proof}
We aim to show that every short exact sequence
$$0\rightarrow L\xrightarrow{f} M\rightarrow N\rightarrow 0$$
in ${}^{J}\!\mathcal{M}^J_K$ splits. This, regarded as a short exact sequence in ${}^{J}\!\mathcal{M}_K$, splits $K$-linearly since $N$ is $K$-projective by \cite[Corollary 2.9]{MW94}. Consequently, there is a right $K$-module map $g:M\rightarrow L$ such that $gf=\id_L$.

Note that due to the cosemisimplicity (or rather, the coseparability) of $J$, we have a $(J,J)$-bicomodule map $\phi:J\rightarrow\k$ such that $\phi(1)=1$. Moreover, if we denote the $(J,J)$-bicomodule structures of $L$ and $M$ by
\begin{align*}
& \rho_M:M\rightarrow J\otimes M\otimes J,\;m\mapsto\sum m_{(-1)}\otimes m_{(0)}\otimes m_{(1)},  \\
& \rho_L:L\rightarrow J\otimes L\otimes J,\;l\mapsto\sum l_{(-1)}\otimes l_{(0)}\otimes l_{(1)},
\end{align*}
then a retraction of $\rho_L$ (in ${}^{J}\!\mathcal{M}^J_K$) could be obtained as in \cite[Theorem 1]{Doi90}, which is given by
$$r_L:J\otimes L\otimes J\rightarrow L,\;
    a\otimes l\otimes b\mapsto \sum \phi(l_{(-1)}S^{-1}(a))l_{(0)}\phi(l_{(1)}S(b)).$$

Specifically, consider the right $K$-module and $(J,J)$-bicomodule structures of
$J\otimes L\otimes J\in {}^{J}\!\mathcal{M}^J_K$ defined by
$$(a\otimes l\otimes b)\cdot c:=\sum ac_{(1)}\otimes lc_{(2)}\otimes bc_{(3)}
  \;\;\;\text{and}\;\;\;
  a\otimes l\otimes b\mapsto \sum a_{(1)}\otimes a_{(2)}\otimes l\otimes b_{(1)}\otimes b_{(2)}$$
respectively for $a,b\in J$, $c\in K$ and $l\in L$, which clearly make $J\otimes L\otimes J$ an object in ${}^{J}\!\mathcal{M}^J_K$.
One could find that $r_L$ is indeed a morphism in ${}^{J}\!\mathcal{M}^J_K$ satisfying $r_L\rho_L=\id_L$ by direct computations. In fact, for any $a,b\in J$, $c\in K$ and $l\in L$,
\begin{eqnarray*}
r_L((a\otimes l\otimes b)\cdot c)
&=& \sum r_L(ac_{(1)}\otimes lc_{(2)}\otimes bc_{(3)})  \\
&=& \sum\phi\left((lc_{(2)})_{(-1)}S^{-1}(ac_{(1)})\right)(lc_{(2)})_{(0)}
        \phi\left((lc_{(2)})_{(1)}S(bc_{(3)})\right)  \\
&=& \sum\phi\left(l_{(-1)}c_{(2)}S^{-1}(c_{(1)})S^{-1}(a)\right)l_{(0)}c_{(3)}
        \phi\left(l_{(1)}c_{(4)}S(c_{(5)})S(b)\right)  \\
&=& \sum\phi\left(l_{(-1)}S^{-1}(a)\right)l_{(0)}c\phi\left(l_{(1)}S(b)\right)  \\
&=& \sum\phi\left(l_{(-1)}S^{-1}(a)\right)l_{(0)}\phi\left(l_{(1)}S(b)\right)c  \\
&=& r_L(a\otimes l\otimes b)c.
\end{eqnarray*}
We also compute that
\begin{eqnarray*}
&& \rho_L\circ r_L(a\otimes l\otimes b)  \\
&=& \sum\rho_L\left(\phi(l_{(-1)}S^{-1}(a))l_{(0)}\phi(l_{(1)}S(b))\right)  \\
&=& \sum\phi(l_{(-1)}S^{-1}(a))\rho_L(l_{(0)})\phi(l_{(1)}S(b))  \\
&=& \sum\phi(l_{(-2)}S^{-1}(a))l_{(-1)}\otimes l_{(0)}\otimes l_{(1)}\phi(l_{(2)}S(b))  \\
&=& \sum\phi(l_{(-2)}S^{-1}(a_{(3)}))l_{(-1)}S^{-1}(a_{(2)})a_{(1)}\otimes l_{(0)}
        \otimes l_{(1)}S(b_{(2)})b_{(3)}\phi(l_{(2)}S(b_{(1)}))  \\
&=& \sum\phi(l_{(-1)}S^{-1}(a_{(2)}))a_{(1)}\otimes l_{(0)}\otimes b_{(2)}\phi(l_{(1)}S(b_{(1)}))  \\
&=& \sum a_{(1)}\otimes \phi(l_{(-1)}S^{-1}(a_{(2)}))l_{(0)}\otimes \phi(l_{(1)}S(b_{(1)}))b_{(2)}  \\
&=& \sum a_{(1)}\otimes r_L\!\left(a_{(2)}\otimes l\otimes b_{(1)}\right) \otimes b_{(2)}
\end{eqnarray*}
due to the $(J,J)$-bicomodule map $\phi:J\rightarrow\k$. Above computations are essentially the same as the proof of \cite[Lemma (3)(ii) on Page 101]{Doi90}

On the other hand, it is not hard to know that $\rho_M$ and $\id\otimes g\otimes\id$ are both morphisms in ${}^{J}\!\mathcal{M}^J_K$, since $g$ is a right $K$-module map. Consequently, a morphism $r_L(\id\otimes g\otimes\id)\rho_M$ from $M$ to $L$ is found in ${}^{J}\!\mathcal{M}^J_K$.
However, we could know by $gf=\id_L$ that
$$r_L(\id\otimes g\otimes\id)\rho_M\circ f=r_L(\id\otimes g\otimes\id)(\id\otimes f\otimes\id)\rho_L
  =r_L\rho_L=\id_L,$$
and thus the desired splitting is obtained.
\end{proof}

It is evident that $J$ is an object in ${}^{J}\!\mathcal{M}^J_K$. A simple subobject of $J$ in ${}^{J}\!\mathcal{M}^J_K$ is precisely a non-zero minimal right $K$-module subcoalgebra of $J$, which would be called a \textit{simple right $K$-module subcoalgebra} of $J$. In fact, we could give more explicit presentations of these simple right $K$-module subcoalgebras in our situation:

\begin{corollary}\label{cor:modsubcoalg}
Let $J$ and $K$ be as above. Then:
\begin{itemize}
  \item[(1)] $J$ is the direct sum of all simple right $K$-module subcoalgebras of $J$;
  \item[(2)] For every simple subcoalgebra $C$ of $J$, $CK$ is a simple right $K$-module subcoalgebra of $J$. In particular, $K$ is a simple right $K$-module subcoalgebra of $J$;
  \item[(3)] Each simple right $K$-module subcoalgebra $K_i$ is of the form $CK$, where $C$ could be chosen as any simple subcoalgebra contained in $K_i$;
  \item[(4)] Given simple subcoalgebras $C$ and $D$ of $J$, we have that $CK=DK$ if and only if $C\subseteq DK$ if and only if $CK\supseteq D$.
\end{itemize}
\end{corollary}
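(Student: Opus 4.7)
The plan is to apply Proposition~\ref{prop:Hopfmodss} together with the observation that a subspace $V\subseteq J$ is a $(J,J)$-sub-bicomodule if and only if it is a subcoalgebra, which follows from the identity $(V\otimes J)\cap(J\otimes V)=V\otimes V$. Hence sub-objects of $J$ in ${}^{J}\!\mathcal{M}^J_K$ are exactly right $K$-module subcoalgebras of $J$, and simple sub-objects are simple right $K$-module subcoalgebras. Item~(1) is then immediate: the semisimplicity of ${}^{J}\!\mathcal{M}^J_K$ gives a decomposition $J=\bigoplus_i K_i$ into simple sub-objects.

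The crux for (2)--(4) is to show that every simple subcoalgebra $C\subseteq J$ is contained in a unique $K_i$. Let $\pi_i:J\to K_i$ be the projections arising from the decomposition; each is a morphism in ${}^{J}\!\mathcal{M}^J_K$ and, since the $K_i$ are subcoalgebras whose sum is direct, also a coalgebra map. The kernel $\ker(\pi_i|_C)=C\cap\bigoplus_{j\neq i}K_j$ is thus a sub-bicomodule of $C$, hence a subcoalgebra of the simple coalgebra $C$, so it is $0$ or $C$; equivalently, $\pi_i|_C$ is zero or injective. Setting $T=\{i:\pi_i|_C\neq 0\}$ (non-empty since $C\neq 0$), the vanishing of $\pi_i|_C$ for $i\notin T$ gives $C\subseteq\bigoplus_{i\in T}K_i$. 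The main obstacle is ruling out $|T|\geq 2$: for distinct $i,j\in T$, both $\pi_i|_C$ and $\pi_j|_C$ are injective, so $\pi_i|_C\otimes\pi_j|_C:C\otimes C\to K_i\otimes K_j$ is injective; but for $c\in C$, writing $c=\sum_k\pi_k(c)$ yields $\Delta(c)=\sum_k\Delta(\pi_k(c))\in\bigoplus_k K_k\otimes K_k$, whose $K_i\otimes K_j$-component vanishes. Hence $(\pi_i|_C\otimes\pi_j|_C)\circ\Delta_C=0$, forcing $\Delta_C=0$ and contradicting $C\neq 0$. Therefore $|T|=1$ and $C$ lies in a unique $K_{i_0}$.

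Given this, $CK$ is a non-zero sub-object of the simple $K_{i_0}$, so $CK=K_{i_0}$; this proves~(2), with the ``in particular'' clause obtained by taking $C=\Bbbk\cdot 1$. Item~(3) follows since any simple right $K$-module subcoalgebra $K'$ contains some simple subcoalgebra $C$ by the cosemisimplicity of $J$, and then $CK\subseteq K'$ is a non-zero sub-object, forcing $CK=K'$ for every such choice of $C$. Finally, (4) follows by chaining (2) and (3): $CK=DK$ immediately gives $C\subseteq DK$; conversely, if $C\subseteq DK$ then $CK$ is the unique simple right $K$-module subcoalgebra containing $C$ while $DK$ is simple and also contains $C$, whence $CK=DK$. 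The equivalence with $D\subseteq CK$ is by symmetry.
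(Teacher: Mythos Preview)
Your proof is correct and follows the same line as the paper's: both derive (1) immediately from Proposition~\ref{prop:Hopfmodss}, then obtain (2)--(4) from the single fact that every simple subcoalgebra $C$ of $J$ lies in a unique summand $K_i$. The paper asserts this containment in one line (``According to (1), every simple subcoalgebra $C$ of $J$ would be contained in some simple right $K$-module subcoalgebra $K_i$''), whereas you spell it out via the projection argument. Your argument is sound, but it can be shortened: once $J=\bigoplus_i K_i$ is a direct sum of subcoalgebras, the coradical decomposes as $J_0=\bigoplus_i (K_i)_0$, so any simple subcoalgebra, being contained in $J_0$, already lies in a single $(K_i)_0\subseteq K_i$. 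This avoids the tensor-product injectivity step.
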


\begin{proof}
\begin{itemize}
\item[(1)]
This is followed by Proposition (\ref{prop:Hopfmodss}) directly.

\item[(2)]
According to (1), every simple subcoalgebra $C$ of $J$ would be contained in some simple right $K$-module subcoalgebra $K_i$ of $J$. However, $CK$ is clearly a non-zero subobject of $K_i$ in ${}^{J}\!\mathcal{M}^J_K$. Thus $CK=K_i$ holds due to the minimality of $K_i$.

\item[(3)]
The reason is similar to the proof of (2), since $K_i$ must contain some simple subcoalgebra $C$ of $J$.

\item[(4)]
It follows by (2) that $CK$ and $DK$ are both simple right $K$-module subcoalgebras of $J$. The desired claim is then obtained according to (3).
\end{itemize}
\end{proof}

\subsection{A Smash Coproduct and its Link-Indecomposable Components}

We still let $J$ be a cosemisimple Hopf algebra in this subsection, but let $Q$ be a irreducible coalgebra whose coradical $Q_0$ is spanned by a grouplike element $g$. Moreover, suppose $Q$ is a right $J$-comodule coalgebra such that $J$ coacts trivially on $g$, or namely, $g\mapsto g\otimes 1$ under this coaction $Q\rightarrow Q\otimes J$. Then the \textit{smash coproduct} (\cite[Section 4.2]{Doi73}) of $Q$ with $J$ is constructed as
$$H:=J\blackltimes Q,$$
whose coalgebra structure is defined by
$$\Delta(a\blackltimes q)
  :=\sum(a_{(1)}\blackltimes q_{(1)(0)})\otimes(a_{(2)}q_{(1)(1)}\blackltimes q_{(2)})
  \;\;\;\;\text{and}\;\;\;\;
  \varepsilon(a\blackltimes q)=\varepsilon(a)\varepsilon(q)$$
for any $a\in J$ and $q\in Q$. Furthermore, this is evident a left $J$-module coalgebra, via the $J$-module structure
$$J\otimes H\rightarrow H,\;b\otimes (a\blackltimes q)\mapsto ba\blackltimes q.$$
Also, we have following straightforward descriptions of the coradical $H_0$ of $H$:

\begin{lemma}
Suppose $J$, $Q$ are defined as above, and $H:=J\blackltimes Q$. Then:
\begin{itemize}
\item[(1)] The coradical $H_0$ is $J\otimes\k g\cong J$ as coalgebras;
\item[(2)]
If we let
\begin{equation}\label{eqn:projpi}
\pi:H=J\blackltimes Q\rightarrow J\otimes(Q/Q^+)\cong J,\;a\blackltimes q\mapsto a\varepsilon(q)
\end{equation}
denote the natural left $J$-module coalgebra projection, then for any subcoalgebra $H'$ of $H$, $\pi(H')$ is the coradical of $H'$.
\end{itemize}
\end{lemma}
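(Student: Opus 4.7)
The plan is to establish both inclusions in the equality $H_0=J\otimes\k g$ of part (1), and then to bootstrap the resulting coalgebra filtration into part (2). For $J\otimes\k g\subseteq H_0$, since $\rho(g)=g\otimes 1$, the smash coproduct formula yields $\Delta(a\blackltimes g)=\sum(a_{(1)}\blackltimes g)\otimes(a_{(2)}\blackltimes g)$, so $J\otimes\k g$ is a subcoalgebra of $H$ isomorphic as a coalgebra to $J$, and hence cosemisimple. For the reverse containment, I would exhibit $\{J\otimes Q_n\}_{n\ge 0}$ as a coalgebra filtration of $H$. The key ingredient is that $\rho:Q\to Q\otimes J$, being a coalgebra map whose target satisfies $(Q\otimes J)_n=Q_n\otimes J$ (using that $Q_0=\k g$ is absolutely cosemisimple together with $J$ cosemisimple), preserves coradical filtrations, so $\rho(Q_n)\subseteq Q_n\otimes J$; plugging this into the smash coproduct formula yields $\Delta(J\otimes Q_n)\subseteq\sum_{i+j=n}(J\otimes Q_i)\otimes(J\otimes Q_j)$. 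By the minimality property of the coradical filtration, $H_0\subseteq J\otimes Q_0=J\otimes\k g$.

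For part (2), I would first strengthen (1) to the equality $H_n=J\otimes Q_n$: the inclusion $\supseteq$ follows by induction on $n$ from the observation that $\Delta(a\blackltimes q)\in H_{n-1}\otimes H+H\otimes H_0$ for $q\in Q_n$, which in turn uses $\Delta_Q(Q_n)\subseteq Q_{n-1}\otimes Q+Q\otimes Q_0$ together with $\rho(g)=g\otimes 1$. Then for any subcoalgebra $H'$ of $H$ the coradical filtration is realized as $(H')_n=H'\cap H_n=H'\cap(J\otimes Q_n)$, and in particular $(H')_0=H'\cap H_0$. Since $\pi|_{H_0}$ is inverse to $\iota$, one has $\iota\pi((H')_0)=(H')_0$, giving the inclusion $\pi((H')_0)\subseteq\pi(H')$ for free. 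The content of the statement lies in the reverse inclusion $\pi(H')\subseteq\pi((H')_0)$, which under the identification $\iota:J\cong H_0$ amounts to showing $\iota\pi(H')\subseteq H'$.

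The main obstacle is this last inclusion $\iota\pi(H')\subseteq H'$, since an idempotent coalgebra endomorphism need not preserve subcoalgebras in general. A key preparatory step is to verify that $\pi$ is in fact a coalgebra map, which is not automatic from the given axioms: one shows that the auxiliary map $\chi:=(\varepsilon_Q\otimes\id_J)\circ\rho:Q\to J$ is a coalgebra map from the irreducible coalgebra $Q$ to the cosemisimple $J$, so its image is an irreducible cosemisimple subcoalgebra of $J$, hence simple; since $\chi(g)=1$, this image equals $\k 1_J$, forcing $\chi=\varepsilon_Q\cdot 1_J$, or equivalently $\rho(Q^+)\subseteq Q^+\otimes J$. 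This makes $\pi$ a coalgebra surjection with kernel $J\otimes Q^+$ and section $\iota$. To complete $\iota\pi(H')\subseteq H'$ itself, I would proceed by induction on the coradical filtration degree of $x\in H'$: for $x\in(H')_0$ the assertion is trivial, and for higher $n$, combining the property $\Delta(x)\in(H')_{n-1}\otimes H'+H'\otimes(H')_0$ with the identity $\Delta\circ\iota\pi=(\iota\pi\otimes\iota\pi)\circ\Delta$ and the inductive hypothesis should allow one to recover $\iota\pi(x)$ from elements already known to lie in $H'$.
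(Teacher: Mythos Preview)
Your treatment of (1) is correct and close in spirit to the paper's, which simply verifies
\[
\Delta(J\blackltimes Q_n)\subseteq (J\blackltimes\k g)\otimes H + H\otimes(J\blackltimes Q_{n-1})
\]
directly from $\Delta_Q(Q_n)\subseteq\k g\otimes Q+Q\otimes Q_{n-1}$, without invoking that $\rho$ preserves coradical filtrations.

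For (2) the paper takes a shorter and different route than yours. It observes that $Q^+$ is a coideal of $Q$, hence $J\blackltimes Q^+=\ker\pi$ is a coideal of $H$ complementary to $H_0$; thus $\pi$ is precisely the coalgebra projection $H\twoheadrightarrow H_0$. For any subcoalgebra $H'$, the kernel of $\pi|_{H'}$ is the coideal $H'\cap\ker\pi$, which meets $H'_0$ trivially since $\ker\pi\cap H_0=0$. One then applies \cite[Proposition 4.1.7(a)]{Rad12}: if $I$ is a coideal of a coalgebra $C$ with $I\cap C_0=0$, then the projection $C\to C/I$ carries $C_0$ isomorphically onto $(C/I)_0$. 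Here $C=H'$, $C/I\cong\pi(H')$ is cosemisimple, so $(\pi(H'))_0=\pi(H')$, while $\pi(H'_0)=H'_0$ since $\pi|_{H_0}=\id$. This gives $\pi(H')=H'_0$ in one stroke; no description of $H_n$ for $n\geq 1$ is needed.

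Your proposed induction, by contrast, has a genuine gap at the inductive step. From $\Delta(x)\in(H')_{n-1}\otimes H'+H'\otimes(H')_0$ and $\Delta\iota\pi=(\iota\pi\otimes\iota\pi)\Delta$ you obtain $\Delta(\iota\pi(x))\in H'\otimes\iota\pi(H')+\iota\pi(H')\otimes H'$ (using the inductive hypothesis on the first summand and $\iota\pi|_{(H')_0}=\id$ on the second). But recovering $\iota\pi(x)$ via $(\id\otimes\varepsilon)$ or $(\varepsilon\otimes\id)$ then produces terms of the form $\iota\pi(z)$ with $z\in H'$ of unrestricted degree, so the induction does not close. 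The way to close it is precisely the cogeneration argument behind Radford's proposition: $\pi(H'_0)$ cogenerates the cosemisimple coalgebra $\pi(H')$, hence equals it.

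A minor point: your detour through $\chi=(\varepsilon_Q\otimes\id_J)\rho$ to obtain $\rho(Q^+)\subseteq Q^+\otimes J$ is unnecessary. That $\varepsilon_Q$ is a right $J$-comodule map is part of the comodule-coalgebra axioms, and it says exactly $(\varepsilon_Q\otimes\id_J)\rho=\varepsilon_Q(-)\cdot 1_J$; so $\chi=\varepsilon_Q\cdot 1_J$ holds by definition, and $J\blackltimes Q^+$ is a coideal immediately.
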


\begin{proof}
\begin{itemize}
\item[(1)]
Since the right $J$-coaction on the unique grouplike $g\in Q$ is trivial, we find that the cosemisimple subcoalgebra $J\otimes\k g$ cogenerates $H$ (via wedge products).
In fact, one could verify that for any $n\geq 1$,
$$\Delta(J\blackltimes Q_n)\subseteq
  (J\blackltimes \k g)\otimes H+H\otimes(J\blackltimes Q_{n-1})$$
holds according to $\Delta(Q_n)\subseteq\k g\otimes Q+Q\otimes Q_{n-1}$, where $\{Q_n\}_{n\geq 0}$ denotes the coradical filtration.

\item[(2)]
Evidently $\pi$ is a left $J$-module coalgebra map.
On the other hand, as an irreducible coalgebra, clearly $Q=\k g\oplus Q^+$ holds. Note that $Q^+$ is a coideal of $Q$. Therefore, $J\blackltimes Q^+$ is also a coideal of $H$, and $H=H_0\oplus(J\blackltimes Q^+)$ holds.
It is not hard to see that the coalgebra map $\pi$ is exactly the projection from $H$ to $H_0$, with respect to this direct sum. Then the desired claim $\pi(H')=H'_0$ could be shown, as a direct application of \cite[Proposition 4.1.7(a)]{Rad12} to $\pi_{H'}$ for example.
\end{itemize}
\end{proof}

Another notion required is the coefficient space for comodules. In general, given a right comodule $V=(V,\rho)$ over a coalgebra $Z$, the \textit{coefficient space} $C_Z(V)$ is the smallest subspace (indeed, subcoalgebra) $Z'$ of $Z$ such that $\rho(V)\subseteq V\otimes Z'$. See \cite[Theorem 3.2.11]{Rad12} for example.

Now with the construction of the smash coproduct $H=J\blackltimes Q$, its indecomposable decomposition of $H$ could be determined under specific assumptions:

\begin{lemma}\label{lem:coproddecomp}
Let $K$ denote the smallest Hopf subalgebra of the cosemisimple Hopf algebra $J$ that includes the coefficient space $C_J(Q)$ of the right $J$-comodule $Q$. With notations above, we have:
\begin{itemize}
\item[(1)] The coalgebra $H$ decomposes as a direct sum of subcoalgebras:
  \begin{equation}\label{eqn:coproddecomp}
  H=\bigoplus_i K_i\blackltimes Q,
  \end{equation}
  where $K_i$ are the simple right $K$-module subcoalgebras of $J$ such that $J=\bigoplus_i K_i$;
\item[(2)] If the coradical $(H_{(1)})_0$ of $H_{(1)}$ is a Hopf subalgebra of $J$, and the coradical of each link-indecomposable component of $H$ is stable under the right multiplication by $(H_{(1)})_0$, then (\ref{eqn:coproddecomp}) gives the link-indecomposable decomposition of the coalgebra $H$.
\end{itemize}
\end{lemma}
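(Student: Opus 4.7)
My plan for part (1) is a direct verification: for $a\in K_i$ and $q\in Q$, the smash coproduct formula gives $\Delta(a\blackltimes q)$ whose tensor factors $a_{(1)}\blackltimes q_{(1)(0)}$ and $a_{(2)}q_{(1)(1)}\blackltimes q_{(2)}$ both lie in $K_i\blackltimes Q$, because $K_i$ is a subcoalgebra of $J$ (so $a_{(1)},a_{(2)}\in K_i$), the coaction value $q_{(1)(1)}$ sits in $C_J(Q)\subseteq K$, and $K_i$ is right $K$-stable (giving $a_{(2)}q_{(1)(1)}\in K_i$). Combined with the $\k$-vector space decomposition $J=\bigoplus_i K_i$ from Corollary~\ref{cor:modsubcoalg}(1), this yields (\ref{eqn:coproddecomp}) as a direct sum of subcoalgebras of $H$.

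For part (2), by Lemma~\ref{lem:linkdecomp}(2) it suffices to show that each summand $K_i\blackltimes Q$ is link-indecomposable. Since $H=\bigoplus_i K_i\blackltimes Q$ is a direct sum of subcoalgebras, each link-indecomposable component $H_{(E)}$ of $H$ sits in a single summand, so $K_i\blackltimes Q=\bigoplus_{E}H_{(E)}$ (summed over those $E$ with $H_{(E)}\subseteq K_i\blackltimes Q$), and correspondingly $K_i=\bigoplus_{E}(H_{(E)})_0$ after taking coradicals. The goal reduces to showing this last sum has exactly one summand for each $i$.

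My strategy is to first establish the identity $(H_{(1)})_0=K$. The inclusion $(H_{(1)})_0\subseteq K$ is immediate, since $H_{(1)}$ lies inside the summand of (\ref{eqn:coproddecomp}) containing $1$, namely $K\blackltimes Q$. For the reverse, since $(H_{(1)})_0$ is a Hopf subalgebra by hypothesis, it suffices to verify $(H_{(1)})_0\supseteq C_J(Q)$; I would do this by showing every simple subcoalgebra $C\subseteq C_J(Q)$ is directly linked to $\k 1$ in $H$. Using cosemisimplicity of $Q$ as a right $J$-comodule, pick a simple right $J$-subcomodule $V\subseteq Q$ with $C_J(V)=C$; then for a basis $\{v_l\}$ of $V$ whose coaction matrix equals a basic multiplicative matrix $\C$ of $C$, a direct computation (at least when $V\subseteq Q_1$) shows that $\Delta(1\blackltimes v_l)\in\k 1\otimes H+H\otimes C$ while $1\blackltimes v_l\notin H_0$, which gives $\k 1\wedge C\supsetneq\k 1+C$ and hence the direct link via Lemma~\ref{lem:linkprim}(2).

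With $(H_{(1)})_0=K$ in hand, hypothesis (b) promotes each $(H_{(E)})_0$ to a right $K$-module subcoalgebra of $J$. Such a subcoalgebra contained in the simple right $K$-module subcoalgebra $K_i$ must, by Corollary~\ref{cor:modsubcoalg}(3), be either zero or equal to $K_i$; since the $(H_{(E)})_0$'s are nonzero and directly sum to $K_i$, exactly one of them fits, and each $K_i\blackltimes Q$ is a single link-indecomposable component. The main obstacle I anticipate is precisely the argument for $(H_{(1)})_0\supseteq C_J(Q)$: while the direct matric computation handles simple $J$-subcomodules $V\subseteq Q_1$ immediately, simple $J$-subcomodules lying higher in the coradical filtration of $Q$ require more care, since then $\Delta_Q(v_l)$ involves intermediate elements outside $V$; an induction on the coradical filtration of $Q$ combined with iterated applications of Proposition~\ref{prop:linkprim2}(2) would then be needed, routing the link from $\k 1$ to $C$ through intermediate simple subcoalgebras already known to sit in $(H_{(1)})_0$.
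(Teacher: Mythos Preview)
Your argument for part (1) coincides with the paper's: both verify directly that $\Delta(K_i\blackltimes Q)\subseteq (K_i\blackltimes Q)\otimes (K_i\blackltimes Q)$ using that $K_i$ is a right $K$-module subcoalgebra and $C_J(Q)\subseteq K$. For part (2), your overall architecture also matches: first prove $(H_{(1)})_0=K$, then use that each $(H_{(E)})_0$ is a nonzero right $K$-module subcoalgebra contained in the simple object $K_i$, forcing equality.

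The real divergence is in the key step $(H_{(1)})_0\supseteq C_J(Q)$. The paper does not argue via direct links and primitive matrices at all. Instead it observes, for any component $H'$ and any subcoalgebra $E\subseteq H'_0$, that $E\blackltimes Q$ is a right coideal of $H$ and a direct summand (since $E$ is a direct summand of the cosemisimple $J$), hence an injective right $H$-comodule; its socle satisfies $(E\blackltimes Q)_0=\pi((E\blackltimes Q)_0)\subseteq\pi(E\blackltimes Q)=E$, so $E\blackltimes Q$ lies in the injective hull of $E$ and therefore in $H'$ by \cite[Lemma 3.7.1]{Rad12}. This yields $C_H(E\blackltimes Q)\subseteq H'$ and $C_J(E\blackltimes Q)\subseteq H'_0$. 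Specializing to $E=\k 1$ and $H'=H_{(1)}$ gives $C_J(Q)\subseteq (H_{(1)})_0$ in one stroke; no induction on the coradical filtration of $Q$ is needed.

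Your proposed inductive route is where the gap sits. The base case $V\subseteq Q_1$ works exactly as you describe, but for $V\subseteq Q_n$ with $n\geq 2$ the coproduct $\Delta(1\blackltimes v)$ does not naturally assemble into a block upper-triangular multiplicative matrix of the shape demanded by Proposition~\ref{prop:linkprim2}(2): the second tensor factors $(v_{(1)})_{[1]}\blackltimes v_{(2)}$ range over $C_J(Q)\blackltimes Q$, not over a finite chain of simple subcoalgebras with a clean ordering, and there is no evident way to extract the needed basic diagonal blocks together with a nontrivial corner entry connecting $\k 1$ to $C_J(V)$. Making this precise would require substantially more work than you indicate, whereas the injective-hull argument handles all filtration levels uniformly and is what the paper actually uses.
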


\begin{proof}
\begin{itemize}
\item[(1)]
Since $K_i$ is a right $K$-module coalgebra, it then follows from our assumption $K\supseteq C_J(Q)$ that
$$\Delta(K_i\blackltimes Q)\subseteq(K_i\blackltimes Q)\otimes(K_iC_J(Q)\blackltimes Q)
  \subseteq(K_i\blackltimes Q)\otimes (K_i\blackltimes Q)$$
by definition. As a result, $K_i\blackltimes Q$ is a subcoalgebra of $H$. This together with Corollary \ref{cor:modsubcoalg}(1) proves the desired claim.

\item[(2)]
Recall that we have regarded $H_0=J\otimes\k g\cong J$, and hence the coradical of each link-indecomposable component of $H$ is identified as a subcoalgebra of the Hopf algebra $J$.

Suppose that $H'$ is a link-indecomposable component of $H$, and $E$ is a subcoalgebra of the coradical $H'_0$ of $H'$. We claim firstly that the coefficient space $C_J(E\blackltimes Q)$ of the right coideal $E\blackltimes Q$ of $H$, regarded as a right $J$-comodule through $\pi$ (\ref{eqn:projpi}), is included in $H'_0$.

In fact, as a right coideal, $E\blackltimes Q$ is a direct summand of $H$ and hence an injective comodule.
On the other hand, its socle $(E\blackltimes Q)_0$, namely the direct sum of all the simple right $H$-subcomodules, is of course included by $H_0$. Therefore
$$(E\blackltimes Q)_0=\pi((E\blackltimes Q)_0)\subseteq\pi(E\blackltimes Q)=E.$$
As a conclusion, $E\blackltimes Q$ is contained in the injective hull of $E$ as a right coideal by \cite[Proposition 3.5.6(1)]{Rad12}. Hence we could know that $E\blackltimes Q$ is also contained in the component $H'$ according to \cite[Lemma 3.7.1]{Rad12}. This implies that $H'\supseteq C_H(E\blackltimes Q)$, and consequently $H'_0\supseteq C_J(E\blackltimes Q)$ holds as well.

Now consider the link-indecomposable component $H_{(1)}$, and choose the subcoalgebra $E=\k1$. The claim above provides $(H_{(1)})_0\supseteq C_J(Q)$. It follows that if $(H_{(1)})_0$ is a Hopf subalgebra of $J$, then it includes $K=(K\blackltimes Q)_0$. We conclude that $H_{(1)}\supseteq K\blackltimes Q$ as subcoalgebras. Moreover, since $H_{(1)}$ is a indecomposable direct summand of the coalgebra $H$, we then see from (\ref{eqn:coproddecomp}) that
\begin{equation}\label{eqn:H(1)coprod}
H_{(1)}=K\blackltimes Q
\end{equation}
holds, and also $(H_{(1)})_0=K$ as their coradicals.

Finally for any $C\in\mathcal{S}$, assume that the coradical $(H_{(C)})_0$ of the component $H_{(C)}$ is right $K$-stable. We could know according to Corollary \ref{cor:modsubcoalg} that
$$(H_{(C)})_0\supseteq (H_{(C)})_0 K\supseteq CK=K_i$$
for some simple right $K$-module subcoalgebra $K_i$ of $J$.
Then $H_{(C)}=K_i\blackltimes Q$ holds, as is seen from (\ref{eqn:coproddecomp}) by the same argument again.
In a word, each link-indecomposable component of $H$ is of form $K_i\blackltimes Q$, which completes the proof.
\end{itemize}
\end{proof}

\begin{remark}
It is not clear whether $K_i\blackltimes Q$ in (\ref{eqn:coproddecomp}) is indecomposable without the assumptions of (2), so that the decomposition above may not be indecomposable.
\end{remark}

\subsection{Presentations of Components with the Dual Chevalley Property}

In this subsection, $H$ is assumed to be a Hopf algebra with the dual Chevalley property over an arbitrary field $\k$. For convenience, let $J=H_0$ be its coradical, and let
$$Q:=H/J^+H,$$
which is clearly a quotient right $H$-module coalgebra of $H$. In addition, $Q$ is a irreducible coalgebra with the one-dimensional coradical $Q_0=\k\overline{1}$ spanned by the natural image $\overline{1}$ of $1\in H$, because $Q$ is cogenerated by
$$\overline{H_0}=H_0/(H_0^+H\cap H_0)=H_0/H_0^+=\k\overline{1}.$$
Furthermore, it could be known by \cite[Theorem 3.1]{Mas03} that $H$ is isomorphic to the smash coproduct $J\blackltimes Q$ constructed. We recall this result with a bit more details, all of which could be found in \cite[Section 4]{Mas03}.

\begin{lemma}(\cite[Theorem 3.1]{Mas03})
Let $H$ be a Hopf algebra with the dual Chevalley property. Denote $J:=H_0$ and $Q:=H/H_0^+H$ respectively as above.
Then:
\begin{itemize}
\item[(1)]
There exists a left $J$-module coalgebra retraction $\gamma:H\rightarrow J$ of the inclusion $J\hookrightarrow H$;
\item[(2)]
$Q$ turns into a right $J$-comodule coalgebra through the structure $\rho:Q\rightarrow Q\otimes J$ induced from
$$H\rightarrow Q\otimes J,\;h\mapsto \sum\overline{h_{(2)}}\otimes S(\gamma(h_{(1)}))\gamma(h_{(3)}),$$
where $h\mapsto \overline{h}$ represents the projection $H\twoheadrightarrow Q$;
\item[(3)]
The resulting left $J$-module coalgebra of smash coproduct of $Q$ with $J$ is isomorphic to $H$ through
\begin{equation}\label{eqn:isocoprod}
H\xrightarrow{\simeq} J\blackltimes Q,\;h\mapsto \sum\gamma(h_{(1)})\otimes \overline{h_{(2)}}.
\end{equation}
\end{itemize}
\end{lemma}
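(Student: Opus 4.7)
The statement has three parts, and I would address them in the given order, since (2) and (3) become formal consequences once the retraction $\gamma$ of (1) has been exhibited. The essential obstacle lies in constructing $\gamma$: it must simultaneously be a coalgebra map, left $J$-linear, and the identity on $J$, and this is precisely where the dual Chevalley property, via cosemisimplicity of $J = H_0$, gets used.

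For (1), my strategy is to first produce a weaker splitting and then correct it. Cosemisimplicity of $J$ implies that the inclusion $J \hookrightarrow H$ admits a $(J,J)$-bicomodule retraction $\gamma_0 : H \to J$: one can either construct $\gamma_0$ inductively on the coradical filtration $\{H_n\}$ by splitting the cosemisimple quotients $H_{n+1}/H_n$ at each stage, or appeal to the fact that a cosemisimple Hopf algebra is coseparable, so that every bicomodule short exact sequence splits. This $\gamma_0$ is rarely a coalgebra map or left $J$-linear, so one replaces it by a convolution-corrected version, for instance by iterating an adjustment of the form $\gamma_0 \mapsto \gamma_0 * S_J\gamma_0 * \gamma_0$ until the deviations from multiplicativity in $J$ and from coalgebra-compatibility are killed by the antipode of $J$. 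Checking that the corrected $\gamma$ still restricts to $\id_J$ is automatic since $\gamma_0|_J = \id_J$ makes the correction act trivially on $J$.

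For (2), I would first verify that $\rho : H \to Q \otimes J$ descends to $Q$. Given $a \in J^+$ and $h \in H$, left $J$-linearity of $\gamma$ expands $\rho(ah)$ to $\sum \overline{a_{(2)} h_{(2)}} \otimes S(\gamma(h_{(1)}))\,S(a_{(1)})\,a_{(3)}\,\gamma(h_{(3)})$, and the identity $\overline{a_{(2)} h_{(2)}} = \varepsilon(a_{(2)}) \overline{h_{(2)}}$ inside $Q$ collapses this expression to $\varepsilon(a)\rho(h) = 0$. Coassociativity and counitality of $\rho$, as well as the compatibility axioms making $Q$ a right $J$-comodule coalgebra, are then mechanical computations using that $\gamma$ is a coalgebra map together with the antipode identity $S(\gamma(h_{(1)}))\,\gamma(h_{(2)}) = \varepsilon(h)\,1_J$. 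Triviality of the $J$-coaction on $\overline{1} \in Q_0$ is immediate from $\gamma(1) = 1$.

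For (3), the smash coproduct structure on $J \blackltimes Q$ is tailor-made so that $\Phi(h) := \sum \gamma(h_{(1)}) \otimes \overline{h_{(2)}}$ is a coalgebra map; unwinding $\Delta_{J \blackltimes Q} \circ \Phi$ against $(\Phi \otimes \Phi) \circ \Delta_H$ and substituting the coaction formula from (2) reduces the identity to the antipode axiom in $J$. Left $J$-linearity of $\Phi$ is inherited from $\gamma$, and bijectivity follows by exhibiting the explicit inverse $\Psi(a \otimes \overline{h}) := \sum a\,S(\gamma(h_{(1)}))\,h_{(2)}$, which is well-defined on $J \otimes Q$ by a computation analogous to the one showing $\rho$ descends, and which satisfies $\Psi \Phi = \id_H$ via $\sum \gamma(h_{(1)}) S(\gamma(h_{(2)})) h_{(3)} = \sum \varepsilon(h_{(1)}) h_{(2)} = h$, together with $\Phi \Psi = \id_{J \blackltimes Q}$. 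The hardest step throughout will be the convolution adjustment in (1): balancing the coalgebra condition, left $J$-linearity, and the retraction property simultaneously is the cohomological heart of the argument, and it is what makes the dual Chevalley hypothesis indispensable.
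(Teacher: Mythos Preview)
The paper does not supply its own proof of this lemma: it is quoted from \cite[Theorem 3.1]{Mas03}, with the surrounding text explicitly deferring all details to \cite[Section 4]{Mas03}. So there is no in-paper argument to compare against, and your proposal must be judged on its own.

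Your treatments of (2) and (3) are correct. Once a left $J$-module coalgebra retraction $\gamma$ is in hand, the descent of $\rho$ to $Q$ via $\overline{a_{(2)}h_{(2)}}=\varepsilon(a_{(2)})\overline{h_{(2)}}$ and the antipode collapse $\sum S(a_{(1)})a_{(2)}=\varepsilon(a)1$ is exactly right, and the inverse $\Psi(a\otimes\overline{h})=\sum aS(\gamma(h_{(1)}))h_{(2)}$ is the standard one.

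The gap is in (1). First, it is not clear what $(J,J)$-bicomodule structure you are putting on $H$: $H$ is an $(H,H)$-bicomodule via $\Delta$, but turning this into a $(J,J)$-bicomodule structure already requires a coalgebra projection $H\to J$, which is what you are constructing. One can instead work filtration-wise, since each $H_n/H_{n-1}$ is naturally a $(J,J)$-bicomodule, but then the resulting $\gamma_0$ is only a filtered vector-space splitting. Second, and more seriously, the convolution adjustment $\gamma_0\mapsto\gamma_0\ast S_J\gamma_0\ast\gamma_0$ does not repair the failure to be a coalgebra map: a convolution product of maps $H\to J$ has no reason to satisfy $(\gamma\otimes\gamma)\Delta_H=\Delta_J\gamma$, and no finite iteration of such corrections kills that defect. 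You correctly sense that this is ``the cohomological heart,'' but the mechanism you name is not the one that works. Masuoka's actual argument in \cite{Mas03} proceeds by induction along the coradical filtration $\{H_n\}$: at each stage the obstruction to extending a left $J$-module coalgebra retraction $H_n\to J$ to $H_{n+1}$ is a Sweedler-type $2$-cocycle with values in a $J$-bicomodule, and cosemisimplicity of $J$ forces the relevant cohomology to vanish (this is the ``approximation by Hochschild cohomology'' of the title). That inductive vanishing argument, not a global convolution trick, is what produces $\gamma$.
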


We remark that by taking the associated grading $\mathsf{gr}$ with respect to the coradical filtration, the isomorphism (\ref{eqn:isocoprod}) turns into the canonical isomorphism
$$\textsf{gr}\;H\xrightarrow{\simeq} J\dotltimes R$$
of the graded Hopf algebra $\mathsf{gr}\;H$ onto the associated bosonization, where $R=\mathsf{gr}\;Q$, a Nichols algebra in $\mathcal{YD}^J_J$. Since $R$ is generated by $(\mathsf{gr}\;Q)(1)=P(Q)$, the space of the primitives in $Q$, it follows that the smallest Hopf subalgebras of $J$ that includes the following three coefficient spaces
$$C_J(Q),\;\;C_J(R),\;\;C_J(P(H))$$
coincide.
We let $K$ denote the coinciding Hopf subalgebra.

The following proposition could be proved with the usage of Proposition \ref{thm:DCP} over the algebraic closure $\overline{\k}$ of the base field $\k$.

\begin{proposition}\label{prop:Hdecomp}
Let $H$ be a Hopf algebra with the dual Chevalley property over $\k$. Denote $J:=H_0$ and $Q:=H/H_0^+H$ respectively. Then (\ref{eqn:coproddecomp}) gives the link-indecomposable decomposition of $H$.
\end{proposition}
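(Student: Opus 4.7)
The plan is to verify the two hypotheses of Lemma \ref{lem:coproddecomp}(2): (a) $(H_{(1)})_0$ is a Hopf subalgebra of $J = H_0$, and (b) for every link-indecomposable component $H_{(C)}$ of $H$, its coradical is stable under right multiplication by $(H_{(1)})_0$. Once these are in place, Lemma \ref{lem:coproddecomp}(2) immediately yields that (\ref{eqn:coproddecomp}) is the link-indecomposable decomposition. Since Proposition \ref{thm:DCP} requires the base field to be algebraically closed, the natural strategy is to pass to the algebraic closure $\overline{\k}$ of $\k$ and descend.

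Concretely, set $\overline{H} := H \otimes_\k \overline{\k}$. By \cite[Corollary 4.1.8]{Rad12}, $\overline{H}_0 = H_0 \otimes_\k \overline{\k}$ is a Hopf subalgebra of $\overline{H}$, so $\overline{H}$ still has the dual Chevalley property and Proposition \ref{thm:DCP} applies to it. Next I would analyze the behaviour of link components under scalar extension. Wedge products commute with the flat base change $\k \to \overline{\k}$, so direct links in $H$ lift to direct links between suitable simple summands in $\overline{H}$; combined with \cite[Lemma 4.8.3]{Rad12} applied to the coalgebra direct sum $\overline{H} = \bigoplus_{C \in \mathcal{S}} H_{(C)} \otimes_\k \overline{\k}$, this forces every link-indecomposable component of $\overline{H}$ to sit inside a single $H_{(C)} \otimes \overline{\k}$. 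Consequently each $H_{(C)} \otimes \overline{\k}$ refines as a finite direct sum of link components of $\overline{H}$; in particular $H_{(1)} \otimes \overline{\k}$ contains $\overline{H}_{(1)}$, which is already a Hopf subalgebra of $\overline{H}$ by Proposition \ref{thm:DCP}(2).

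With this descent set up, I would combine it with Proposition \ref{thm:DCP}(1) over $\overline{\k}$: for any component $\overline{H}_{(\overline{C}_\alpha)} \subseteq H_{(C)} \otimes \overline{\k}$ and $\overline{H}_{(\overline{D}_\beta)} \subseteq H_{(1)} \otimes \overline{\k}$, one obtains $\overline{H}_{(\overline{C}_\alpha)} \overline{H}_{(\overline{D}_\beta)} \subseteq \sum_{\overline{E} \subseteq \overline{C}_\alpha \overline{D}_\beta} \overline{H}_{(\overline{E})}$. A compatibility check — essentially Corollary \ref{cor:prodlink} applied over $\overline{\k}$ — verifies that each simple $\overline{E}$ arising in this product belongs to $D \otimes \overline{\k}$ for some $D \in \mathcal{S}$ lying in $H_{(C)}$, so the right-hand side stays inside $H_{(C)} \otimes \overline{\k}$. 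Summing over all pairs gives $(H_{(C)} \otimes \overline{\k})(H_{(1)} \otimes \overline{\k}) \subseteq H_{(C)} \otimes \overline{\k}$, and faithful flatness of $\overline{\k}/\k$ descends this to $H_{(C)} H_{(1)} \subseteq H_{(C)}$ in $H$. The special case $C = \k 1$, together with $S(H_{(1)}) = H_{(1)}$ from Corollary \ref{cor:antipodecomp}, proves $H_{(1)}$ is a Hopf subalgebra, yielding (a); intersecting $H_{(C)} H_{(1)} \subseteq H_{(C)}$ with $H_0$ and using the dual Chevalley identity $H_0^2 \subseteq H_0$ gives $(H_{(C)})_0 (H_{(1)})_0 \subseteq H_{(C)} \cap H_0 = (H_{(C)})_0$, which is (b). The principal obstacle is the rigorous verification of the link descent — certifying that simple summands of $C \otimes \overline{\k}$ are never linked in $\overline{H}$ to simples outside $H_{(C)} \otimes \overline{\k}$, and that products $\overline{C}_\alpha \overline{D}_\beta$ produce simples whose link classes remain inside $H_{(C)} \otimes \overline{\k}$; this is where the subtle interplay of Proposition \ref{prop:linkprim2} with base change of wedge products must be made precise without circularity.
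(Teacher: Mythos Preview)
Your overall strategy---verify the two hypotheses of Lemma \ref{lem:coproddecomp}(2) by passing to $\overline{\k}$ and invoking Proposition \ref{thm:DCP}---matches the paper's. But the ``compatibility check'' you propose for showing $(H_{(C)}\otimes\overline{\k})(H_{(1)}\otimes\overline{\k})\subseteq H_{(C)}\otimes\overline{\k}$ is circular as written. You have only established the inclusion $\overline{H}_{(1)}\subseteq H_{(1)}\otimes\overline{\k}$, so a simple $\overline{D}_\beta\subseteq H_{(1)}\otimes\overline{\k}$ need not be linked to $\overline{\k}1$ in $\overline{H}$; it could sit in some other $\overline{H}$-component of $H_{(1)}\otimes\overline{\k}$. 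For such $\overline{D}_\beta$, Corollary \ref{cor:prodlink} does \emph{not} force $\overline{C}_\alpha\overline{D}_\beta$ to be linked with $\overline{C}_\alpha$, and the only remaining information is $\overline{C}_\alpha\overline{D}_\beta\subseteq (H_{(C)})_0(H_{(1)})_0\otimes\overline{\k}$---but knowing that $(H_{(C)})_0(H_{(1)})_0\subseteq(H_{(C)})_0$ is precisely hypothesis (b), which you are still trying to prove.

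The paper closes this gap by a different route that you do not mention: it applies the smash coproduct description, specifically Equation (\ref{eqn:H(1)coprod}) from the proof of Lemma \ref{lem:coproddecomp}(2), to $\overline{H}$ itself. Since Proposition \ref{thm:DCP}(2) guarantees $(\overline{H}_{(1)})_0$ is a Hopf subalgebra, that argument yields $\overline{H}_{(1)}=(K\otimes\overline{\k})\blackltimes(Q\otimes\overline{\k})=(K\blackltimes Q)\otimes\overline{\k}$ on the nose. This identifies $H_{(1)}=K\blackltimes Q$ (hence $(H_{(1)})_0=K$, giving (a) directly) and, crucially, $(\overline{H}_{(1)})_0=K\otimes\overline{\k}$. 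Now Proposition \ref{thm:DCP}(1) with $D=\overline{\k}1$ shows each $\overline{H}$-component is stable under right multiplication by $K\otimes\overline{\k}$; since $H_{(C)}\otimes\overline{\k}$ is a direct sum of such components, it is right $K\otimes\overline{\k}$-stable, and faithful flatness descends this to $(H_{(C)})_0K\subseteq(H_{(C)})_0$. The missing ingredient in your sketch is exactly this use of the smash coproduct structure over $\overline{\k}$ to nail down $\overline{H}_{(1)}$ (equivalently, to prove $H_{(1)}\otimes\overline{\k}=\overline{H}_{(1)}$ rather than merely $\supseteq$).
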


\begin{proof}
We aim to verify the assumption of Lemma \ref{lem:coproddecomp}(2) for $H$. Firstly, consider the Hopf algebra $H\otimes\overline{\k}$ over the algebraically closed field $\overline{\k}$. Note that $H\otimes\overline{\k}$ also has the dual Chevalley property, since it is cogenerated by its cosemisimple Hopf subalgebra $J\otimes\overline{\k}$ (\cite[Lemma 1.3]{Lar71}).

Now it follows from Proposition \ref{thm:DCP}(2) that the coradical of $(H\otimes\overline{\k})_{(1)}$ is a Hopf $\overline{\k}$-subalgebra of $(H\otimes\overline{\k})_0=J\otimes\overline{\k}$.
Besides, since
$$(H\otimes\overline{\k})/(J\otimes\overline{\k})^+(H\otimes\overline{\k})
  =(H/J^+H)\otimes\overline{\k}=Q\otimes\overline{\k},$$
the smallest Hopf subalgebra of $H\otimes\overline{\k}$ including $C_{J\otimes\overline{\k}}(Q\otimes\overline{\k})$ is exactly $K\otimes\overline{\k}$.
Thus we see from Equation (\ref{eqn:H(1)coprod}) on $H\otimes\overline{\k}$ in the proof of Lemma \ref{lem:coproddecomp}(2) that
\begin{equation}\label{eqn:Hbaseext(1)}
(H\otimes\overline{\k})_{(1)}
=(K\otimes\overline{\k})\;\blackltimes\nolimits_{\overline{\k}}\;(Q\otimes\overline{\k})
=(K\blackltimes Q)\otimes\overline{\k},
\end{equation}
which is indecomposable as a subcoalgebra of $H\otimes\overline{\k}$. Therefore, $K\blackltimes Q$ in $H$ must be indecomposable, and this is exactly $H_{(1)}$ because we know that $(H\otimes\overline{\k})_{(1)}\subseteq H_{(1)}\otimes\overline{\k}$ holds. As a result, we have $(H_{(1)})_0=K$, a Hopf subalgebra of $J$.

On the other hand, applying the projection (\ref{eqn:projpi}) on the subcoalgebra $(H\otimes\overline{\k})_{(1)}$, we know by Equation (\ref{eqn:Hbaseext(1)}) that its coradical is
$$((H\otimes\overline{\k})_{(1)})_0=K\otimes\overline{\k}.$$
It then follows from Proposition \ref{thm:DCP}(1) that each link-indecomposable component of $H\otimes\overline{\k}$ is right $K\otimes\overline{\k}$-stable. Consequently, a component of $H$ turns, after the base extension $\otimes\overline{\k}$, into a direct sum of some components in $H\otimes\overline{\k}$, which are right $K\otimes\overline{\k}$-stable. Therefore, the original component of $H$ and its coradical as well, are right $K$-stable.
\end{proof}

Now we could improve the presentation for the link-indecomposable components of $H$ as follows, where $\mathcal{S}$ still denotes the set of all the simple subcoalgebras:

\begin{theorem}\label{thm:DCPnew}
Let $H$ be a Hopf algebra over an arbitrary field $\k$ with the dual Chevalley property. Then
\begin{itemize}
  \item[(1)] For any $C\in\mathcal{S}$, $H_{(C)}=CH_{(1)}=H_{(1)}C$;
  \item[(2)] For any $C,D\in\mathcal{S}$,
    $H_{(C)}H_{(D)}\subseteq\sum\limits_{E\in\mathcal{S},\;E\subseteq CD}H_{(E)}$;
  \item[(3)] $H_{(1)}$ is a Hopf subalgebra.
\end{itemize}
\end{theorem}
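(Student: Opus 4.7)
The strategy is to leverage Proposition \ref{prop:Hdecomp}, which already delivers the link-indecomposable decomposition $H = \bigoplus_i K_i \blackltimes Q$ with $H_{(1)} = K \blackltimes Q$. Combining this with Corollary \ref{cor:modsubcoalg}(3), every $C \in \mathcal{S}$ (which lies in $H_0 = J$ by the dual Chevalley property) sits inside the unique block corresponding to the simple right $K$-module subcoalgebra $CK$, so
$$H_{(C)} = (CK) \blackltimes Q.$$
I would then prove (3) first, read off (1) from the left-module structure of the smash coproduct, and finally derive (2) as a formal consequence.

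For (3), I would pass to the algebraic closure $\overline{\k}$. Equation (\ref{eqn:Hbaseext(1)}) already established that $(H \otimes \overline{\k})_{(1)} = H_{(1)} \otimes \overline{\k}$, and $H \otimes \overline{\k}$ inherits the dual Chevalley property, so Proposition \ref{thm:DCP}(2) applies over $\overline{\k}$ and shows that $H_{(1)} \otimes \overline{\k}$ is a Hopf $\overline{\k}$-subalgebra of $H \otimes \overline{\k}$. Faithful flatness of $\overline{\k}/\k$ (or just injectivity of the base-change map $V \hookrightarrow V \otimes \overline{\k}$ for any $\k$-vector space $V$) then descends this to $H_{(1)}$ being a Hopf subalgebra of $H$. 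For (1), the isomorphism $H \to J \blackltimes Q$ of (\ref{eqn:isocoprod}) is a left $J$-module map, so left multiplication by elements of $C \subseteq J$ on $H_{(1)} = K \blackltimes Q$ acts on the first tensor factor, producing $(CK) \blackltimes Q = H_{(C)}$; this gives $CH_{(1)} = H_{(C)}$. The opposite-sided equality $H_{(1)} C = H_{(C)}$ I would obtain by applying the formula just proved to $S(C) \in \mathcal{S}$, then using Corollary \ref{cor:antipodecomp} (in particular $S(H_{(1)}) = H_{(1)}$) and the fact that $S^{-1}$ is an algebra anti-automorphism to flip the resulting product.

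Finally (2) falls out algebraically: using $H_{(1)}^2 = H_{(1)}$ (from (3) together with $1 \in H_{(1)}$) and $H_{(1)} D = D H_{(1)}$ (both sides equal $H_{(D)}$ by (1)), one computes
$$H_{(C)} H_{(D)} = (C H_{(1)})(H_{(1)} D) = C H_{(1)} D = (CD) H_{(1)}.$$
Since $CD \subseteq H_0$ decomposes as $\sum_{E \in \mathcal{S},\; E \subseteq CD} E$, applying (1) once more yields $(CD) H_{(1)} = \sum_{E \subseteq CD} H_{(E)}$, as required. The substantive work, namely obtaining the smash coproduct presentation at the $\k$-level, is already packaged inside Proposition \ref{prop:Hdecomp}; the only remaining step of note is the base-change observation that (3) follows from Proposition \ref{thm:DCP}(2) over $\overline{\k}$, after which (1) and (2) reduce to brief module-theoretic and algebraic verifications.
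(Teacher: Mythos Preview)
Your proposal is correct and follows essentially the same route as the paper: both arguments rest on Proposition~\ref{prop:Hdecomp} (giving $H_{(C)}=CK\blackltimes Q$ and $H_{(1)}=K\blackltimes Q$) together with the left $J$-module structure of the smash coproduct to obtain $H_{(C)}=CH_{(1)}$, and then deduce (2) algebraically. The only notable difference is the order: you establish (3) first by base-changing to $\overline{\k}$ and invoking Proposition~\ref{thm:DCP}(2), whereas the paper derives (3) from (2); your ordering is arguably cleaner, since the paper's computation $CH_{(1)}DH_{(1)}=CDH_{(1)}$ in the proof of (2) tacitly uses $H_{(1)}^2=H_{(1)}$, i.e.\ part of (3). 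Your antipode argument for $H_{(1)}C=H_{(C)}$ is an explicit version of what the paper abbreviates as ``the opposite-sided results''.
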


\begin{proof}
\begin{itemize}
\item[(1)]
Combining Proposition \ref{prop:Hdecomp} and Corollary \ref{cor:modsubcoalg} with notations in this subsection, we know that $H_{(C)}=CK\blackltimes Q$ holds for any $C\in\mathcal{S}$. Moreover, it follows from the identification (\ref{eqn:isocoprod}) $H=J\blackltimes Q$ as left $J$-module coalgebras that
$$H_{(C)}=CK\blackltimes Q=C(K\blackltimes Q)=CH_{(1)}.$$
The other equation $H_{(C)}=H_{(1)}C$ is obtained by the opposite-sided results above.

\item[(2)]
This is followed by (1), namely,
$$H_{(C)}H_{(D)}=CH_{(1)}DH_{(1)}=CDH_{(1)}
  =\left(\sum\limits_{E\in\mathcal{S},\;E\subseteq CD}E\right)H_{(1)}
  \subseteq\sum\limits_{E\in\mathcal{S},\;E\subseteq CD}H_{(E)}$$
for any $C,D\in\mathcal{S}$.

\item[(3)]
This is directly followed by (2) and Corollary \ref{cor:antipodecomp}.
\end{itemize}
\end{proof}

\begin{remark}
Lemma \ref{lem:prodlink} as well as Theorem \ref{thm:DCPnew} might fail for a Hopf algebra $H$ without the dual Chevalley property. A counter-example is presented as Example \ref{ex:withoutDCP} in the next section.
\end{remark}

In addition, we introduce an equivalence relation on $\mathcal{S}$, defining that $C$ and $D$ are related if $CK=DK$ (or equivalently $KC=KD$), where $K=(H_{(1)})_0$ as before. Let $\mathcal{S}_0\subseteq\mathcal{S}$ be a full set of chosen non-related representatives with respect to this equivalence relation. Then a presentation of the indecomposable decomposition of $H$ is stated as follows:

\begin{corollary}\label{cor:Hdecomp}
Let $H$ be a Hopf algebra over an arbitrary field $\k$ with the dual Chevalley property. Then the link-indecomposable decomposition of $H$ could be presented as
$$H=\bigoplus_{C\in\mathcal{S}_0} CH_{(1)}.$$
\end{corollary}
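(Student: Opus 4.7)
The plan is to combine Theorem \ref{thm:DCPnew}(1) with the link-indecomposable decomposition of Lemma \ref{lem:linkdecomp}(1), and identify the defined equivalence relation on $\mathcal{S}$ with the link relation. First I would recall that by Lemma \ref{lem:linkdecomp}(1), $H=\bigoplus H_{(C)}$ where the sum runs over a set of representatives for the link-equivalence classes on $\mathcal{S}$. By Theorem \ref{thm:DCPnew}(1), each such component can be rewritten as $H_{(C)}=CH_{(1)}$. So the corollary reduces to showing that $\mathcal{S}_0$, as chosen, is precisely a full set of representatives for the link-equivalence classes in $\mathcal{S}$.

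To do this, the key claim is that for $C,D\in\mathcal{S}$, we have $CK=DK$ (equivalently, $KC=KD$) if and only if $C$ and $D$ are linked. The ``if'' direction is immediate: $CK=DK$ implies $CH_{(1)}=C(K\blackltimes Q)=CK\blackltimes Q=DK\blackltimes Q=DH_{(1)}$ under the identification (\ref{eqn:isocoprod}), and since $H_{(C)}=CH_{(1)}$ and $H_{(D)}=DH_{(1)}$, the two components coincide, which forces $C$ and $D$ to be linked. Conversely, if $C$ and $D$ are linked, then $H_{(C)}=H_{(D)}$, so $CH_{(1)}=DH_{(1)}$; applying Proposition \ref{prop:Hdecomp} together with Corollary \ref{cor:modsubcoalg}(2)(3), these two subcoalgebras are simple right $K$-module subcoalgebras of $J$ smashed with $Q$, hence comparing coradicals via the projection $\pi$ yields $CK=DK$.

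Once this equivalence is established, $\mathcal{S}_0$ is a system of representatives for the link classes, so
\[
H=\bigoplus_{C\in\mathcal{S}_0}H_{(C)}=\bigoplus_{C\in\mathcal{S}_0}CH_{(1)},
\]
and the directness of the sum is inherited from Lemma \ref{lem:linkdecomp}(1).

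I do not expect a real obstacle here: the main theorem has already been proved, and the statement is essentially a reindexing. The only mildly subtle point is verifying that the equivalence relation ``$CK=DK$'' on $\mathcal{S}$ does coincide with the link relation, and this is handled purely by the smash-coproduct presentation $H_{(C)}=CK\blackltimes Q$ from the proof of Proposition \ref{prop:Hdecomp} together with Corollary \ref{cor:modsubcoalg}(4).
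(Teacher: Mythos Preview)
Your proposal is correct and follows essentially the same approach as the paper: both reduce the corollary to the equivalence ``$CK=DK$ if and only if $C$ and $D$ are linked,'' and both derive this from the identification $H_{(C)}=CK\blackltimes Q$ established in the proof of Theorem \ref{thm:DCPnew}(1) (via Proposition \ref{prop:Hdecomp}). Your write-up simply spells out both implications in more detail than the paper does; one minor slip is that your ``if'' and ``only if'' labels are swapped relative to the stated biconditional, but the arguments themselves are correct.
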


\begin{proof}
It suffices to show that for any $C,D\in\mathcal{S}$, $CK=DK$ if and only if $C$ and $D$ are linked. This
could be known from the fact $CK\blackltimes Q=H_{(C)}$ appearing in the proof of Theorem \ref{thm:DCPnew}(1) which holds for each $C\in\mathcal{S}$.
\end{proof}

This corollary partially generalizes Montgomery's result \cite[Theorem 3.2(4)]{Mon95} (and its proof) for a pointed Hopf algebra $H$:
$$H=\bigoplus_{g\in G(H)/G(H_{(1)})} gH_{(1)},$$
where $G(H)$ and $G(H_{(1)})$ denote the set of all the grouplikes of $H$ and $H_{(1)}$, respectively. Indeed when $H$ is pointed, simple subcoalgebras spanned by $g,g'\in G(H)$ are related (in the sense preceding Corollary \ref{cor:Hdecomp}) if and only if $gG(H_{(1)})=g'G(H_{(1)})$. We also remark that $G(H_{(1)})$ is a normal subgroup of $G(H)$ by \cite[Theorem 3.2(3)]{Mon95}.

One more corollary to mention is that the construction of $H_{(1)}$ is compatible with base extension:

\begin{corollary}
Let $H$ be a Hopf algebra with the dual Chevalley property over $\k$. Then for any field extension $\mathbb{F}/\k$,
$$(H\otimes\mathbb{F})_{(1)}=H_{(1)}\otimes\mathbb{F}.$$
\end{corollary}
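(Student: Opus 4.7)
The plan is to apply Proposition \ref{prop:Hdecomp} to both $H$ and its base extension $H \otimes \mathbb{F}$, identify the resulting smash-coproduct presentations of their link-indecomposable components containing $1$, and conclude by observing that every ingredient in the construction commutes with $\otimes \mathbb{F}$.

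First, I would verify that $H \otimes \mathbb{F}$ again enjoys the dual Chevalley property with $(H \otimes \mathbb{F})_0 = H_0 \otimes \mathbb{F}$. This is exactly the argument already used for $\mathbb{F} = \overline{\k}$ inside the proof of Proposition \ref{prop:Hdecomp}: the tensor $H_0 \otimes \mathbb{F}$ remains cosemisimple by \cite[Lemma 1.3]{Lar71}, and since the coradical filtration of $H$ base-changes to $\{H_n \otimes \mathbb{F}\}$, the subcoalgebra $H_0 \otimes \mathbb{F}$ cogenerates $H \otimes \mathbb{F}$; this forces $(H \otimes \mathbb{F})_0 = H_0 \otimes \mathbb{F}$.

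Next, set $J := H_0$, $Q := H/J^+H$, and let $K$ be the smallest Hopf subalgebra of $J$ including the coefficient space $C_J(Q)$, so that $H_{(1)} = K \blackltimes Q$ by Proposition \ref{prop:Hdecomp}. For the extended Hopf algebra $H \otimes \mathbb{F}$, the corresponding quotient is
$$(H \otimes \mathbb{F})/(J \otimes \mathbb{F})^+(H \otimes \mathbb{F}) = Q \otimes \mathbb{F},$$
and its coefficient space as a right $(J \otimes \mathbb{F})$-comodule is $C_J(Q) \otimes \mathbb{F}$, since fixing a $\k$-basis of $Q$ yields the same matrix coefficients spanning the coefficient space both before and after tensoring. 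Moreover, the smallest Hopf subalgebra containing a subcoalgebra is built by iterated closure under multiplication and the antipode (with its inverse), an operation manifestly compatible with $\otimes \mathbb{F}$; hence the counterpart of $K$ for $H \otimes \mathbb{F}$ is precisely $K \otimes \mathbb{F}$.

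Applying Proposition \ref{prop:Hdecomp} to $H \otimes \mathbb{F}$ now yields
$$(H \otimes \mathbb{F})_{(1)} = (K \otimes \mathbb{F}) \blackltimes_{\mathbb{F}} (Q \otimes \mathbb{F}) = (K \blackltimes Q) \otimes \mathbb{F} = H_{(1)} \otimes \mathbb{F},$$
which is the desired equality. The main step requiring attention is the first paragraph --- confirming that the dual Chevalley property and the coradical identification persist through base extension --- after which each remaining ingredient of the smash-coproduct recipe is visibly compatible with $\otimes \mathbb{F}$, and the conclusion drops out.
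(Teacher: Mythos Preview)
Your proposal is correct and follows essentially the same route as the paper: both arguments establish that $H\otimes\mathbb{F}$ retains the dual Chevalley property with coradical $J\otimes\mathbb{F}$, identify its quotient as $Q\otimes\mathbb{F}$ and the relevant Hopf subalgebra as $K\otimes\mathbb{F}$, and then read off $(H\otimes\mathbb{F})_{(1)}=(K\blackltimes Q)\otimes\mathbb{F}=H_{(1)}\otimes\mathbb{F}$ from the smash-coproduct description. The only cosmetic difference is that the paper phrases this as ``replace $\overline{\k}$ by $\mathbb{F}$ in the proof of Proposition~\ref{prop:Hdecomp}, invoking Theorem~\ref{thm:DCPnew} where Proposition~\ref{thm:DCP} was used,'' whereas you simply apply Proposition~\ref{prop:Hdecomp} directly to $H\otimes\mathbb{F}$ as a Hopf algebra over $\mathbb{F}$; since that proposition is already stated over an arbitrary base field, your formulation is equally valid and slightly more direct.
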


\begin{proof}
Let $\mathbb{F}$ replace $\overline{\k}$ in the proof of Proposition \ref{prop:Hdecomp}. Then by Theorem \ref{thm:DCPnew}, the coradical of $(H\otimes\mathbb{F})_{(1)}$ is also a Hopf subalgebra. Consequently, equations as (\ref{eqn:Hbaseext(1)})
$$(H\otimes\mathbb{F})_{(1)}=(K\blackltimes Q)\otimes\mathbb{F}$$
and $H_{(1)}=K\blackltimes Q$ hold in this situation as well.
\end{proof}

\subsection{Properties for $H$ over the Hopf Subalgebra $H_{(1)}$}\label{subsection:HoverH(1)}

At the final of this section, the faithfully flatness and freeness of $H$ as an $H_{(1)}$-module are discussed when $H$ has the dual Chevalley property.
In fact, the faithful flatness could be replaced by a stronger property that $H$ is a projective generator as a $H_{(1)}$-module:

\begin{corollary}\label{cor:ffness}
Let $H$ be a Hopf algebra with the dual Chevalley property. Then:
\begin{itemize}
  \item[(1)] $H$ is a projective generator of left as well as right $H_{(1)}$-modules;
  \item[(2)] For any $C\in\mathcal{S}$, $H_{(C)}$ is a finitely generated projective generator of left as well as right $H_{(1)}$-modules.
\end{itemize}
\end{corollary}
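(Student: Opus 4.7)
My plan is to deduce (1) from (2). By Corollary \ref{cor:Hdecomp} combined with Theorem \ref{thm:DCPnew}(2), $H=\bigoplus_{C\in\mathcal{S}_0} H_{(C)}$ decomposes as a direct sum of $H_{(1)}$-sub-bimodules, since $H_{(1)}H_{(C)},\,H_{(C)}H_{(1)}\subseteq H_{(C)}$ (as $C$ is the only simple subcoalgebra of itself). Hence, proving (2) yields (1): $H$ will be projective as a direct sum of projectives, and it contains $H_{(1)}=H_{(\k1)}$ as a summand, so it is automatically a generator. Finite generation of $H_{(C)}$ is immediate from $H_{(C)}=CH_{(1)}=H_{(1)}C$ together with $\dim_\k C<\infty$.

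For the projectivity of $H_{(C)}$ as a right $H_{(1)}$-module, I would invoke the smash coproduct machinery of Section \ref{section4}. Write $K:=(H_{(1)})_0$ and let $K_i:=CK$ be the simple right $K$-module subcoalgebra of $J:=H_0$ containing $C$. Proposition \ref{prop:Hdecomp} gives, under the identification $\Phi:H\xrightarrow{\sim}J\blackltimes Q$, the identifications $H_{(C)}\cong K_i\blackltimes Q$ and $H_{(1)}\cong K\blackltimes Q$. Since $\Phi$ is a map of left $J$-module coalgebras, the multiplication map
\[
  \mu:K_i\otimes_K H_{(1)}\longrightarrow H_{(C)},\qquad x\otimes_K h\mapsto xh,
\]
corresponds under $\Phi$ to the canonical isomorphism $K_i\otimes_K(K\otimes Q)\xrightarrow{\sim}K_i\otimes Q$, so $\mu$ is a bijective right $H_{(1)}$-linear map. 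Now $K_i$, being a simple object in ${}^{J}\!\mathcal{M}^J_K\subseteq{}^{J}\!\mathcal{M}_K$, is a finitely generated projective right $K$-module by \cite[Corollary 2.9]{MW94}, as used in the proof of Proposition \ref{prop:Hopfmodss}. Since projectivity is preserved by base change along $K\hookrightarrow H_{(1)}$, we conclude that $H_{(C)}\cong K_i\otimes_K H_{(1)}$ is a finitely generated projective right $H_{(1)}$-module. The left-sided case follows by applying this argument to $H^{\mathrm{op}}$, which also has the dual Chevalley property.

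For the generator property, I would use Lemma \ref{lem:matrixproduct}(3): if $\mathcal{C}$ is a basic multiplicative matrix of $C$, then $S(\mathcal{C})\mathcal{C}=I$ yields $1\in S(C)C\subseteq H_{(S(C))}H_{(C)}$. Hence there exist $u_i\in H_{(S(C))}$ and $v_i\in H_{(C)}$ with $\sum_i u_iv_i=1$. Let $p:H\twoheadrightarrow H_{(1)}$ be the projection onto the summand $H_{(1)}$ in $H=\bigoplus_E H_{(E)}$; since this decomposition is one of $H_{(1)}$-bimodules, $p$ is an $H_{(1)}$-bimodule map. The right $H_{(1)}$-linear maps $f_i:H_{(C)}\to H_{(1)}$, $a\mapsto p(u_ia)$, then satisfy $\sum_i f_i(v_i)=p(1)=1$, so the induced map $H_{(C)}^{\oplus n}\to H_{(1)}$ is surjective. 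This shows $H_{(C)}$ is a generator of right $H_{(1)}$-modules, completing the verification that it is a projective generator.

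The main obstacle is verifying that $\mu$ is an isomorphism. This requires bookkeeping through the smash coproduct: one checks that $\Phi$ intertwines left multiplication on $H$ by $j\in J$ with the left $J$-action on $J\blackltimes Q$ (a consequence of the left $J$-linearity of $\gamma$ together with $\overline{jh}=\varepsilon(j)\overline{h}$ for $j\in J$), and then confirms that $\Phi\circ\mu$ factors as the canonical isomorphism $K_i\otimes_K(K\otimes Q)\xrightarrow{\sim}K_i\otimes Q$, proving that $\mu$ itself is bijective.
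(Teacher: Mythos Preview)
Your proof is correct and takes a genuinely different route from the paper's. The paper proceeds in the opposite order: it proves (1) first by writing $H=H_{(1)}\oplus M$ with $M$ an $H_{(1)}$-sub-bimodule (via Theorem~\ref{thm:DCPnew}(2)), invokes \cite[Propositions~1.4 and~1.6]{Chi14} to obtain faithful flatness of $H$ over $H_{(1)}$, and then upgrades to ``projective generator'' through \cite[Corollary~2.9]{MW94}. For (2) the paper simply notes that each $H_{(C)}$ is a non-zero object in ${}_{H_{(1)}}\mathcal{M}^H$ (resp.\ ${}^H\!\mathcal{M}_{H_{(1)}}$) and applies \cite[Corollary~2.9]{MW94} once more, relying on (1). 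Your approach is more constructive: the explicit right $H_{(1)}$-module isomorphism $H_{(C)}\cong K_i\otimes_K H_{(1)}$ via the smash-coproduct identification reduces projectivity over $H_{(1)}$ to projectivity of $K_i$ over $K$, which only needs the \emph{cosemisimple} case of Chirvasitu's theorem together with \cite[Corollary~2.9]{MW94}. Your generator argument using $1\in S(C)C$ and the bimodule projection $p$ is likewise more direct than the paper's second appeal to the relative Hopf-module machinery. The trade-off is that the paper's argument is shorter and avoids unpacking the isomorphism $\Phi$, while yours yields the finer structural statement $H_{(C)}\cong K_i\otimes_K H_{(1)}$.

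One minor point: over a non-algebraically-closed field a simple subcoalgebra $C$ need not admit a basic multiplicative matrix in the sense of Section~\ref{section2}, so your invocation of Lemma~\ref{lem:matrixproduct}(3) is not literally available. However, the needed fact $1\in S(C)C$ follows immediately from the antipode axiom applied to any $c\in C$ with $\varepsilon(c)=1$, so this is a cosmetic fix.
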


\begin{proof}
\begin{itemize}
\item[(1)]
By Lemma \ref{lem:linkdecomp}, there is a direct sum $H=H_{(1)}\oplus M$, where $M$ denotes the direct sum of all link-indecomposable components of $H$ excluding $H_{(1)}$. It follows by Theorem \ref{thm:DCPnew}(2) that $H_{(1)}M$ and $MH_{(1)}$ are both contained in $M$.
The faithful flatness of $H$ over $H_{(1)}$ is then obtained according to a combination of \cite[Propositions 1.4 and 1.6]{Chi14}.

Also, recall that Hopf algebras $H$ and $H_{(1)}$ must have bijective antipodes, since they both have the dual Chevalley property. As a result, we could find by \cite[Corollary 2.9]{MW94} that $H$ is even a projective generator as a left as well as right $H_{(1)}$-module.

\item[(2)]
Clearly, it follows by Theorem \ref{thm:DCPnew}(1) that each $H_{(C)}$ is finitely generated over $H_{(1)}$ (on both sides).
On the other hand, since each $H_{(C)}$ is a non-zero object in ${}_{H_{(1)}}\mathcal{M}^H$ as well as ${}^H\!\mathcal{M}_{H_{(1)}}$, the desired result is obtained from (1) according to \cite[Corollary 2.9]{MW94} as well.
\end{itemize}
\end{proof}

However, we could show that $H$ is not always free over $H_{(1)}$ by an example in the following.

We begin with an example of commutative cosemisimple Hopf algebras $J$ which is not free over some of its Hopf subalgebra $K$ (see \cite[Section 5]{Tak79} e.g.). One could choose also a right $K$-comodule $V$, such that the coefficient space $C_K(V)$ and its image $S(C_K(V))$ under the antipode $S$ generate $K$. Let $R$ be the symmetric algebra on $V$. It is routine to verify that via diagonal $K$-coactions, $R$ forms a right $K$-comodule Hopf algebra containing the elements of $V$ as primitives. According to \cite[Section 4]{AD72}, the construction of smash coproduct of $R$ with $K$ gives rise to a commutative Hopf algebra
$$K\blackltimes R,$$
which is the tensor product $K\otimes R$ as an algebra.

Besides, since $R$ is naturally a right $J$-comodule Hopf algebra, we have the analogous Hopf algebra
$$H:=J\blackltimes R,$$
which includes $K\blackltimes R$ as a Hopf subalgebra. Evidently $H$ has coradical $J\otimes 1$ which is also a Hopf subalgebra.
Moreover, note that $R$ is irreducible as a coalgebra with grouplike element $1$ on which $J$ coacts trivially, and $K$ is in fact the smallest Hopf subalgebra of $J$ that includes $C_J(R)$ by our choice of $V$. Thus we know that $H_{(1)}=K\blackltimes R$, as is seen from Proposition \ref{prop:Hdecomp} as well as Lemma \ref{lem:coproddecomp}. We conclude that this $H$ is not free over $H_{(1)}$. Otherwise, if it were free, the tensor product $\otimes_R R/R^+$ would imply that $J$ were free over $K$, a contradiction to our chosen example.

Finally, recall in \cite[Theorem 3.2]{Mon93} that $H_{(1)}$ must be a normal Hopf subalgebra for any pointed Hopf algebra $H$. When $H$ is non-pointed, we pose the following question on the normality of $H_{(1)}$, although there are several positive examples such as $T_\infty(2,1,-1)^\circ$ presented in Subsection \ref{subsection:T},

\begin{question}
Let $H$ be a Hopf algebra with the dual Chevalley property. Is the Hopf subalgebra $H_{(1)}$ normal in $H$?
\end{question}

It is not clear to us whether the answer to this question is positive or not, even for $\mathsf{gr}\;H$, for which $(\mathsf{gr}\;H)_{(1)}$ is normal in $\mathsf{gr}\;H$ if and only if $K$ is normal in $J$. This last condition should be related with possible structures on $P(R)$ as a Yetter-Drinfeld module, or namely, as an object in $\mathcal{YD}^J_J$.

\section{Examples}\label{section5}

For the remaining of this paper, $\k$ is always assumed to be an algebraically closed field of characteristic $0$. Before specific examples, we provide an evident lemma which helps us determine link-indecomposable components:

\begin{lemma}\label{lem:linkcomp}
Let $H$ be a coalgebra, and $\C_1,\C_2,\cdots,\C_t$ be basic multiplicative matrices of $C_1,C_2,\cdots,C_t\in\mathcal{S}$, respectively. Suppose that there is a multiplicative matrix of form
\begin{equation}\label{eqn:linkcomp}
\G:=\left(\begin{array}{cccc}
    \C_1 & \X_{12} & \cdots & \X_{1t}  \\
    0 & \C_2 & \cdots & \X_{2t}  \\
    \vdots & \vdots & \ddots & \vdots  \\
    0 & 0 & \cdots & \C_t  \\
\end{array}\right).
\end{equation}
If $C_1,C_2,\cdots,C_t$ are linked, then all the entries of $\G$ belong to this link-indecomposable component $H_{(C_1)}$.
\end{lemma}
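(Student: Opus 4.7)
The plan is to proceed by induction on $t \geq 1$. The base case $t=1$ reduces to $\G=\C_1$, whose entries lie in $C_1\subseteq H_{(C_1)}$ by definition. For the inductive step, I first note that the upper-left $(t-1)\times(t-1)$ sub-block of $\G$, having diagonal $\C_1,\ldots,\C_{t-1}$, is again a multiplicative matrix of the form (\ref{eqn:linkcomp}): indeed, the block $(i,j)$-entry of $\G\;\widetilde{\otimes}\;\G$ only involves the blocks $\G_{ik}\;\widetilde{\otimes}\;\G_{kj}$ for $i\leq k\leq j$, all of which lie within the sub-block. Applying the inductive hypothesis (to the still-linked simples $C_1,\ldots,C_{t-1}$) places its entries in $H_{(C_1)}$. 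The analogous argument for the lower-right $(t-1)\times(t-1)$ sub-block, using that $C_2,\ldots,C_t$ are all linked to $C_1$, gives its entries in $H_{(C_2)}=H_{(C_1)}$. Together these cover every $\X_{ij}$ with $(i,j)\neq(1,t)$, as well as all diagonal blocks.

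It remains to show that the entries of $\X_{1t}$ lie in $H_{(C_1)}$. Expanding the $(1,t)$-block of $\Delta(\G)=\G\;\widetilde{\otimes}\;\G$ gives
$$\Delta(\X_{1t})=\C_1\;\widetilde{\otimes}\;\X_{1t}+\sum_{k=2}^{t-1}\X_{1k}\;\widetilde{\otimes}\;\X_{kt}+\X_{1t}\;\widetilde{\otimes}\;\C_t,$$
so for each entry $x$ of $\X_{1t}$, the above combined with the inductive conclusion yields
$$\Delta(x)\in C_1\otimes H+H_{(C_1)}\otimes H_{(C_1)}+H\otimes C_t\subseteq H_{(C_1)}\otimes H+H\otimes H_{(C_1)}.$$
Hence every such $x$ lies in the wedge $H_{(C_1)}\wedge_H H_{(C_1)}$, and the lemma will follow once I verify that this wedge is contained in $H_{(C_1)}$.

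The wedge-closure claim is the step that deserves the most care, and it follows cleanly from the coalgebra direct-sum decomposition $H=\bigoplus_{E\in\mathcal{S}/\sim}H_{(E)}$ of Lemma \ref{lem:linkdecomp}(1). For $h\in H_{(C_1)}\wedge_H H_{(C_1)}$, write $h=\sum_E h_E$ with $h_E\in H_{(E)}$; since each $H_{(E)}$ is a subcoalgebra, $\Delta(h_E)\in H_{(E)}\otimes H_{(E)}$, and these contributions are linearly independent across distinct pairs $(E,E)$. Membership of $\Delta(h)$ in $H_{(C_1)}\otimes H+H\otimes H_{(C_1)}$ then forces $\Delta(h_E)=0$ whenever $E$ is not linked to $C_1$; since $\Delta$ is injective on any coalgebra (by $(\varepsilon\otimes\id)\Delta=\id$), such $h_E$ vanish, and $h\in H_{(C_1)}$.

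I expect the main obstacle to be purely bookkeeping: making sure that the two sub-blocks really are multiplicative of the required shape so that the induction applies, and carefully tracking the three types of tensor summands in $\Delta(\X_{1t})$ so that the wedge containment is unambiguous. The conceptual content boils down to the single observation that each link-indecomposable component, being a direct summand coalgebra of $H$, is automatically closed under wedging with itself; everything else is an induction driven by the block structure of (\ref{eqn:linkcomp}).
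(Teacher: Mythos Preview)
Your argument is correct, but it takes a noticeably different route from the paper's. The paper's proof is essentially one line: since $\G$ is multiplicative, its entries span a subcoalgebra $H'$ of $H$; the block upper-triangular shape forces the simple subcoalgebras of $H'$ to be exactly $C_1,\ldots,C_t$, and if these are all linked then $H'$ is link-indecomposable, hence $H'\subseteq H_{(C_1)}$. No induction and no wedge-closure argument are needed.

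Your approach instead proceeds by induction on $t$, peeling off the upper-left and lower-right $(t{-}1)$-blocks and then handling $\X_{1t}$ via the inclusion $\Delta(x)\in H_{(C_1)}\otimes H+H\otimes H_{(C_1)}$ together with the fact that a direct-summand subcoalgebra is closed under the wedge in $H$. This is entirely valid, and the wedge-closure step is justified exactly as you wrote it. What your route buys is explicitness: one sees concretely why each individual $\X_{ij}$ lands in the component, and the only structural fact used about components is that they form a coalgebra direct-sum decomposition. What the paper's route buys is brevity and a cleaner conceptual picture: it never touches $\Delta(\X_{1t})$ at all, instead appealing directly to the definition of a link-indecomposable subcoalgebra once one observes what the coradical of the span of the entries must be.
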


\begin{proof}
Since $\G$ is multiplicative, all its entries would span a subcoalgebra $H'$. Also, $C_1,C_2,\cdots,C_t$ are exactly all the simple subcoalgebras of $H'$. Thus if $C_1,C_2,\cdots,C_t$ are linked, then $H'$ is link-indecomposable and thus contained in the link-indecomposable component.
\end{proof}

\subsection{Without the Dual Chevalley Property}\label{subsection:D}

As mentioned in the end of Section \ref{section3}, the dual Chevalley property might be necessary for Lemma \ref{lem:prodlink} or Corollary \ref{cor:prodlink} in a way. We would show that the following Hopf algebra, denoted by $D(2,2,\sqrt{-1})$, does not satisfy the property in Lemma \ref{lem:prodlink}. The structure is in fact a particular example of a certain classification $D(m,d,\xi)$ introduced in \cite[Section 4.1]{WLD16}, where $m$ and $d$ are both chosen to be $2$.

\begin{example}\label{ex:withoutDCP}
Let $\sqrt{-1}$ be a fixed square root of $-1$. As an algebra, $D(2,2,\sqrt{-1})$ is generated by $x^{\pm1},g^{\pm1},y,u_0,u_1$ with relations:
\begin{align*}
& xx^{-1}=x^{-1}x=1,\;\;gg^{-1}=g^{-1}g=1,  \\
& xy=yx,\;\;gx=xg,\;\;yg=-gy,\;\;y^2=1-x^4=1-g^2,  \\
& u_ix=x^{-1}u_i,\;\;u_ig=(-1)^ig^{-1}u_i,\;\;yu_i=(1+(-1)^ix^2)u_{1-i}=\sqrt{-1}x^2u_iy
\end{align*}
for $i=0,1$, and
\begin{align*}
& u_0^2=\frac{1}{2}x(1+x^2)g^{-1},\;\;u_0u_1=\frac{\sqrt{-1}}{2}xg^{-1}y,\;\;
  u_1u_0=-\frac{1}{2}xg^{-1}y,\;\;u_1^2=-\frac{\sqrt{-1}}{2}x(1-x^2)g^{-1}.
\end{align*}
The coalgebra structure and antipode are given by:
\begin{align*}
& \Delta(x)=x\otimes x,\;\;\Delta(g)=g\otimes g,\;\;\Delta(y)=1\otimes y+y\otimes g,  \\
& \Delta(u_0)=u_0\otimes u_0-u_1\otimes x^{-2}gu_1,\;\;
  \Delta(u_1)=u_0\otimes u_1+u_1\otimes x^{-2}gu_0,  \\
& \varepsilon(x)=\varepsilon(g)=\varepsilon(u_0)=1,\;\;\varepsilon(y)=\varepsilon(u_1)=0,  \\
& S(x)=x^{-1},\;\;S(g)=g^{-1},\;\;S(y)=g^{-1}y,\;\;
  S(u_0)=x^{-3}gu_0,\;\;S(u_1)=-\sqrt{-1}x^{-1}u_1.
\end{align*}
\end{example}

With the application of the Diamond Lemma \cite{Ber78}, we could know that $D(2,2,\sqrt{-1})$ has a linear basis
\begin{equation}\label{eqn:Dbasis}
\{x^ig^jy^l \mid 0\leq i\leq 3,\;j\in\mathbb{Z},\;0\leq l\leq 1\}
\cup\{x^ig^ju_l\mid i\in\mathbb{Z},\;0\leq j,l\leq 1\}.
\end{equation}
An equivalent but more general version is \cite[Lemma 3.3]{Wu16}, but we write the basis in this form (\ref{eqn:Dbasis}) for our purposes. Furthermore, all the simple subcoalgebras and their basic multiplicative matrices are also needed:

\begin{proposition}
The set of all the simple subcoalgebras of $D(2,2,\sqrt{-1})$ is
$$\mathcal{S}=\{\k x^ig^j\mid 0\leq i\leq 3,\;j\in\mathbb{Z}\}
              \cup\{x^iC\mid i\in\mathbb{Z}\},$$
where $C:=\k\{x^{-2j}g^ju_l\mid 0\leq j,l\leq 1\}$ with a basic multiplicative matrix
$$\C:=\left(\begin{array}{cc}
    u_0 & u_1  \\  -x^{-2}gu_1 & x^{-2}gu_0
  \end{array}\right),$$
and $x^iC\neq x^{i'}C$ as long as $i\neq i'$.
\end{proposition}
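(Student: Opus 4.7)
The plan has three parts: verify that each listed subcoalgebra is simple, establish pairwise distinctness, and prove exhaustion. The exhaustion step is where the main work lies.

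First I would verify simplicity. Each $x^ig^j$ is a product of the grouplikes $x$ and $g$, hence grouplike itself, so $\k x^ig^j$ is a one-dimensional (automatically simple) subcoalgebra. For the matrix $\mathcal{C}$, a direct computation using the given coproducts of $u_0, u_1$ together with the relation $g^2 = x^4$ (which simplifies $x^{-4}g^2$ to $1$) verifies $\Delta(\mathcal{C}) = \mathcal{C}\;\widetilde{\otimes}\;\mathcal{C}$ and $\varepsilon(\mathcal{C}) = I_2$; the four entries are, up to sign, four distinct elements of the basis (\ref{eqn:Dbasis}), hence linearly independent. Thus $\mathcal{C}$ is a basic multiplicative matrix and $C$ is simple. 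For $i \in \mathbb{Z}$, applying Lemma \ref{lem:Kprod}(2) to the $1\times 1$ multiplicative matrix $(x^i)$ and $\mathcal{C}$ produces $x^i\mathcal{C}$ multiplicative, with entries $x^iu_0, x^iu_1, -x^{i-2}gu_1, x^{i-2}gu_0$ again distinct basis elements of (\ref{eqn:Dbasis}); so $x^i\mathcal{C}$ is basic and $x^iC$ is simple.

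For distinctness, the grouplikes $x^ig^j$ with $0 \le i \le 3$, $j \in \mathbb{Z}$ are distinct basis elements of (\ref{eqn:Dbasis}), while the four-dimensional subspaces $x^iC = \k\{x^iu_0, x^iu_1, x^{i-2}gu_0, x^{i-2}gu_1\}$ are pairwise distinct because the basis element $x^iu_0$ lies in $x^iC$ but not in $x^{i'}C$ whenever $i \ne i'$.

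The main obstacle is exhaustion. Let $H'$ be the sum of all the listed simple subcoalgebras. By inspection $H'$ is the span of the ``$y$-free'' basis vectors of (\ref{eqn:Dbasis}), so $H = H' \oplus V$ where $V := \k\{x^ig^jy \mid 0 \le i \le 3,\, j \in \mathbb{Z}\}$. Since $H'$ is a sum of simple subcoalgebras it is cosemisimple, so $H' \subseteq H_0$. To obtain $H_0 \subseteq H'$ it suffices to show $H_0 \cap V = 0$. Given $v = \sum_{i,j}\alpha_{ij}x^ig^jy \in H_0$, its coproduct
$$\Delta(v) = \sum_{i,j}\alpha_{ij}\bigl(x^ig^j \otimes x^ig^jy + x^ig^jy \otimes x^ig^{j+1}\bigr)$$
paired with the linear functional $f \in H^*$ dual to the basis vector $x^{i_0}g^{j_0+1}$ (zero on all other basis vectors) yields $(\id \otimes f)\Delta(v) = \alpha_{i_0,j_0}x^{i_0}g^{j_0}y$, which must lie in $H_0$ since $H_0$ is a subcoalgebra. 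However, the subcoalgebra of $H$ generated by the nonzero element $x^{i_0}g^{j_0}y$ is the three-dimensional space $\k\{x^{i_0}g^{j_0},\, x^{i_0}g^{j_0}y,\, x^{i_0}g^{j_0+1}\}$ whose coradical is merely the two-dimensional $\k x^{i_0}g^{j_0} \oplus \k x^{i_0}g^{j_0+1}$; it is not cosemisimple, so it cannot lie in $H_0$, forcing $\alpha_{i_0,j_0} = 0$. Hence $v = 0$, $H_0 = H'$, and by the uniqueness of the decomposition of a cosemisimple coalgebra into its simple subcoalgebras, the simple subcoalgebras of $H$ are precisely the listed ones.
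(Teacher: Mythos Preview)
Your proof is correct. The paper itself supplies essentially no argument here, writing only ``Verified by the structure of $D(2,2,\sqrt{-1})$ and direct computations'' and citing \cite[Proposition 3.2]{Wu16}; your proposal carries out precisely those direct computations in full, so the approach is the same one the paper indicates, only made explicit.
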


\begin{proof}
Verified by the structure of $D(2,2,\sqrt{-1})$ and direct computations. One could see \cite[Proposition 3.2]{Wu16} for more general cases.
\end{proof}

Now we know that $D(2,2,\sqrt{-1})$ does not have the dual Chevalley property, since for $u_0,u_1\in C$, their products $u_0u_1$ and $u_1u_0$ do not belong to the coradical.

\begin{proposition}\label{prop:Ddecomp}
The link-indecomposable decomposition of $H:=D(2,2,\sqrt{-1})$ is
$$H=\left(\bigoplus\limits_{0\leq i\leq 3}H_{(x^i)}\right)
    \oplus\left(\bigoplus\limits_{i\in\mathbb{Z}}H_{(x^iC)}\right),$$
where $H_{(x^i)}=\k\{x^ig^jy^l\mid 0\leq j,l\leq 1\}=x^iH_{(1)}$ and $H_{(x^iC)}=x^iC$.
\end{proposition}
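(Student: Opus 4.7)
The plan is to apply Lemma~\ref{lem:linkdecomp}(2): it suffices to exhibit $H$ as a direct sum of link-indecomposable subcoalgebras, whereupon the summands are forced to be the link-indecomposable components. First I would verify that each claimed summand is a subcoalgebra. Each $x^iC$ is simple by the preceding proposition. For each $H_{(x^i)}$, closure under $\Delta$ is the content of
\begin{align*}
\Delta(x^ig^j) &= x^ig^j\otimes x^ig^j,\\
\Delta(x^ig^jy) &= x^ig^j\otimes x^ig^jy\;+\;x^ig^jy\otimes x^ig^{j+1},
\end{align*}
both consequences of $\Delta(y)=1\otimes y+y\otimes g$ together with $yg=-gy$.

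Next, for link-indecomposability: each $x^iC$ is simple, hence trivially link-indecomposable. For each $H_{(x^i)}$ and each pair $(x^ig^j,\,x^ig^{j+1})$ of consecutive grouplikes lying in $H_{(x^i)}$, the $1\times 1$ matrix $(x^ig^jy)$ is a non-trivial $(\k x^ig^j,\ \k x^ig^{j+1})$-primitive matrix, so by Lemma~\ref{lem:linkprim}(2) the simples $\k x^ig^j$ and $\k x^ig^{j+1}$ are directly linked. Concatenating these direct links places all simple subcoalgebras of $H_{(x^i)}$ in a single link class.

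Finally I would verify that the proposed decomposition exhausts $H$ via the basis~(\ref{eqn:Dbasis}). The grouplike-type basis elements $x^ig^jy^l$ ($0\le i\le 3$) are partitioned among the four subspaces $H_{(x^i)}$ by the value of $i$. For the $u$-type basis elements $x^ig^ju_l$ ($i\in\mathbb{Z}$, $j,l\in\{0,1\}$), the explicit form $x^iC=\k\{x^iu_0,\,x^iu_1,\,x^{i-2}gu_0,\,x^{i-2}gu_1\}$ shows $x^iu_l\in x^iC$ and $x^igu_l\in x^{i+2}C$, so each basis element belongs to exactly one summand $x^{i'}C$. With the direct sum verified, Lemma~\ref{lem:linkdecomp}(2) identifies the decomposition as the link-indecomposable one. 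The principal subtlety lies in the hidden relation $x^4=g^2$ forced by $y^2=1-x^4=1-g^2$: it identifies basis expressions like $g^{2k}$ with $x^{4k}$-translates of $1$ and must be tracked when checking that the linking chain $\k 1\to\k g\to\k g^2\to\cdots$ remains inside $H_{(1)}$ as it passes through $g^2=x^4$, while not reaching simples such as $\k x$ that sit in the distinct component $H_{(x)}$.
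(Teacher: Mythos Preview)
Your proposal is correct and follows essentially the same route as the paper: both arguments link the grouplikes $\k x^ig^j$ and $\k x^ig^{j+1}$ via the skew-primitive $x^ig^jy$, observe that each $x^iC$ is already simple, and then match the resulting subcoalgebras against the basis (\ref{eqn:Dbasis}) so that Lemma~\ref{lem:linkdecomp}(2) finishes the job. The only cosmetic difference is that the paper phrases the first step as an inclusion of spans into the components $H_{(x^i)}$ rather than a direct verification that each summand is a link-indecomposable subcoalgebra, but the content is identical.
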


\begin{proof}
On the one hand, note that $\Delta(x^ig^jy)=x^ig^j\otimes x^ig^jy+x^ig^jy\otimes x^ig^{j+1}$ always holds. Thus, for each fixed $0\leq i\leq 3$, the simple subcoalgebras (or grouplike elements)
$$\cdots,\;x^ig^{-2},\;x^ig^{-1},\;x^i,\;x^ig,\;x^ig^2,\;x^ig^3,\;\cdots,$$
or equivalently
$$\cdots,\;x^{i-4},\;x^{i-4}g,\;x^i,\;x^ig,\;x^{i+4},\;x^{i+4}g,\;\cdots,$$
are linked, and $x^ig^jy$ belongs to this link-indecomposable component $H_{(x^i)}$. We conclude that
\begin{equation}\label{eqn:Ddecomp1}
\k\{x^ig^jy^l\mid j\in\mathbb{Z},\;0\leq l\leq 1\}
  \subseteq H_{(x^i)}\;\;\;\;(0\leq i\leq 3).
\end{equation}

On the other hand, the remaining non-pointed simple subcoalgebras clearly satisfy
\begin{equation}\label{eqn:Ddecomp2}
\k\{x^{i-2j}g^ju_l\mid 0\leq j,l\leq 1\}=x^iC
  \subseteq H_{(x^iC)}\;\;\;\;(i\in\mathbb{Z}).
\end{equation}

However, the direct sum of the left-hand sides of (\ref{eqn:Ddecomp1}) and (\ref{eqn:Ddecomp2}) become exactly $D(2,2,\sqrt{-1})$, according to the form of the basis (\ref{eqn:Dbasis}). The desired link-indecomposable decomposition is then obtained as the direct sum of the right-hand sides.
\end{proof}

\begin{remark}
Note that as a pointed subcoalgebra, $H_{(1)}$ would satisfy condition (\ref{eqn:H(1)prodcond}). Thus it is a Hopf subalgebra, even though $H=D(2,2,\sqrt{-1})$ does not have the dual Chevalley property.
\end{remark}

Finally we could verify that $D(2,2,\sqrt{-1})$ does not have the property in Lemma \ref{lem:prodlink}(1). Consider simple subcoalgebras $\k1$, $\k g$ and $C$, and note that
$$C=\k\{x^{-2j}g^ju_l\mid 0\leq j,l\leq 1\}=\k\{x^{2j}g^{-j}u_l\mid 0\leq j,l\leq 1\}$$
holds since $x^{-2}g=x^2g^{-1}$.
Clearly, $\k1$ and $\k g$ are linked, but we could compute that
\begin{eqnarray*}
gC
&=& g\cdot\k\{x^{2j}g^{-j}u_l\mid 0\leq j,l\leq 1\}
~=~ \k\{x^{2j}g^{1-j}u_l\mid 0\leq j,l\leq 1\}  \\
&=& \k\{x^{2j}g^{1-j}u_l\mid 0\leq 1-j,l\leq 1\}
~=~ \k\{x^{2(1-j)}g^{j}u_l\mid 0\leq j,l\leq 1\}  \\
&=& \k\{x^{2(1-j)}g^{2j}g^{-j}u_l\mid 0\leq j,l\leq 1\}
~=~ \k\{x^{2(1-j)}x^{4j}g^{-j}u_l\mid 0\leq j,l\leq 1\}   \\
&=& \k\{x^{2+2j}g^{-j}u_l\mid 0\leq j,l\leq 1\}
~=~ x^2C,
\end{eqnarray*}
which is not linked with $C$. That is to say, $(\k1)C$ and $(\k g)C$ are \textit{not} linked, and hence the property in Lemma \ref{lem:prodlink}(1) does not hold.

Moreover, one could find by direct computations that
$$H_{(1)}H_{(C)}=H_{(C)}\oplus H_{(x^2C)}\nsubseteq H_{(C)}$$
for example. Thus $D(2,2,\sqrt{-1})$ dissatisfies the properties in Items (1) and (2) of Theorem \ref{thm:DCPnew}.

\subsection{Non-Degenerate Hopf Pairings}\label{subsection:T}

When $H$ is infinite-dimensional, sometimes $H_{(1)}$ could be an idea for constructing non-degenerate Hopf pairings. The notion of pairings of bialgebras or Hopf algebras are due to \cite{Maj90}. This is also regarded as a sense of a quantum group in \cite{Tak92}.

\begin{definition}
Let $H$ and $H^\bullet$ be Hopf algebras.
A linear map $\langle,\rangle:H^\bullet\otimes H\rightarrow\k$ is called a Hopf pairing (on $H$), if
$$\begin{array}{ll}
\mathrm{(i)}\;\;\;\;\langle ff',h\rangle=\sum\langle f,h_{(1)}\rangle\langle f',h_{(2)}\rangle,
& \mathrm{(ii)}\;\;\;\;\langle f,hh'\rangle=\sum\langle f_{(1)},h\rangle\langle f_{(2)},h'\rangle,  \\
\mathrm{(iii)}\;\;\langle 1,h\rangle=\varepsilon(h),
& \mathrm{(iv)}\;\;\;\langle f,1\rangle=\varepsilon(f),  \\
\mathrm{(v)}\;\;\;\langle f,S(h)\rangle=\langle S(f),h\rangle
\end{array}$$
hold for all $f,f'\in H^\bullet$ and $h,h'\in H$.
Moreover, it is said to be non-degenerate, if for any $f\in H^\bullet$ and any $h\in H$,
\center{$\langle f,H\rangle=0$ implies $f=0$, and $\langle H^\bullet,h\rangle=0$ implies $h=0$.}
\end{definition}

Consider one of the infinite-dimensional Taft algebras (\cite[Example 2.7]{LWZ07}), denoted by $T_\infty(2,1,-1)$. Suppose $T_\infty(2,1,-1)^\bullet$ is chosen as the link-indecomposable component of the finite dual $T_\infty(2,1,-1)^\circ$ containing the unit element. We would show that the evaluation $\langle,\rangle:T_\infty(2,1,-1)^\bullet\otimes T_\infty(2,1,-1)\rightarrow\k$ is a non-degenerate Hopf pairing.

Let us recall the structure of $T_\infty(2,1,-1)$ and $T_\infty(2,1,-1)^\circ$. We remark that the finite dual of infinite-dimensional Taft algebra $T_\infty(n,v,\xi)$ are once determined in \cite[Lemma 6.9]{Jah15} and \cite[Corollary 4.4.6(III)]{Cou19} (see also \cite[Proposition 7.2]{BCJ21}). Here we introduce the structure of $T_\infty(2,1,-1)^\circ$ stated in \cite[Section 3]{LL??2} with generators and relations:

\begin{example}
\begin{itemize}
\item[(1)]
As an algebra, $T_\infty(2,1,-1)$  is generated by $g$ and $x$ with relations:
\begin{align*}
g^2=1,\;\;xg=-gx.
\end{align*}
Then $T_\infty(2,1,-1)$ becomes a Hopf algebra with comultiplication, counit and antipode given by
\begin{align*}
& \Delta(g)=g\otimes g,\;\;\Delta(x)=1\otimes x+x\otimes g,\;\;
  \varepsilon(g)=1,\;\;\varepsilon(x)=0,  \\
& S(g)=g,\;\;S(x)=gx.
\end{align*}
Moreover, $T_\infty(2,1,-1)$ has a linear basis $\{g^jx^l\mid 0\leq j\leq 1,\;l\in\mathbb{N}\}$.

\item[(2)]
As an algebra, $T_\infty(2,1,-1)^\circ$ is generated by $\psi_\la\;(\la\in\k),\;\omega,\;E_2,\;E_1$ with relations
\begin{align*}
& \psi_{\la_1}\psi_{\la_2}=\psi_{\la_1+\la_2},\;\;\psi_0=1,\;\;
\om^2=1,\;\;E_1^2=0,  \\
& \om\psi_\la=\psi_\la\om,\;\;E_2\om=\om E_2,\;\;E_1\om=-\om E_1,  \\
& E_2\psi_\la=\psi_\la E_2,\;\;E_1\psi_\la=\psi_\la E_1,\;\;E_1E_2=E_2E_1
\end{align*}
for all $\la,\la_1,\la_2\in\k$. The coalgebra structure and antipode are given by:
\begin{align*}
& \Delta(\om)=\om\otimes\om,\;\;\Delta(E_1)=1\otimes E_1+E_1\otimes\om,\;\;\\
& \Delta(E_2)=1\otimes E_2 + E_1\otimes\om E_1+ E_2\otimes 1,  \\
& \Delta(\psi_\la)=(\psi_\la\otimes\psi_\la)(1\otimes 1+\la E_1\otimes\om E_1),  \\
& \varepsilon(\om)=\varepsilon(\psi_\la)=1,\;\;\varepsilon(E_1)=\varepsilon(E_2)=0,  \\
& S(\om)=\om,\;\;S(E_1)=\om E_1,\;\;S(E_2)=-E_2,\;\;
  S(\psi_\la)=\psi_{-\la},
\end{align*}
for $\la\in\k$. Note that $\{\psi_\la\om^j E_2^s E_1^l\mid \la\in\k,\;0\leq j,l\leq 1,\;s\in\mathbb{N}\}$ is a linear basis.
\end{itemize}
\end{example}

\begin{lemma}(\cite[Section 6]{LL??2})
$T_\infty(2,1,-1)^\circ$ has a Hopf subalgebra
$$T_\infty(2,1,-1)^\bullet:=\k\{\om^j E_2^s E_1^l\mid 0\leq j,l\leq 1,\;s\in\mathbb{N}\},$$
such that the evaluation $\langle,\rangle:T_\infty(n,v,\xi)^\bullet\otimes T_\infty(n,v,\xi)\rightarrow\k$ is a non-degenerate Hopf pairing.
\end{lemma}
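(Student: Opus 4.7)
The plan is to derive this lemma as an application of Theorem \ref{thm:DCPnew} and Proposition \ref{prop:Hdecomp}, combined with explicit inspection of the presentation of $T_\infty(2,1,-1)^\circ$. The argument splits into (a) identifying the displayed span as the link-indecomposable component of $T_\infty(2,1,-1)^\circ$ containing $1$ and hence as a Hopf subalgebra, (b) verifying the Hopf pairing axioms, and (c) establishing non-degeneracy of the evaluation.

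For (a), let $H := T_\infty(2,1,-1)^\circ$. First I would verify that the coradical $H_0$ is the cosemisimple subalgebra spanned by the ``grouplike-type'' generators $\psi_\la$ and $\psi_\la\om$ ($\la\in\k$); the defining relations and the formulas $\Delta(\psi_\la)=(\psi_\la\otimes\psi_\la)(1\otimes1+\la E_1\otimes\om E_1)$ make it routine to check that $H_0$ is closed under multiplication, comultiplication and the antipode, so $H$ has the dual Chevalley property. Theorem \ref{thm:DCPnew}(3) then gives that $H_{(1)}$ is a Hopf subalgebra of $H$. To match $H_{(1)}$ with the claimed $\k$-span, I would invoke Equation~(\ref{eqn:H(1)coprod}): $H_{(1)}=K\blackltimes Q$, where $Q=H/H_0^+H$ and $K$ is the smallest Hopf subalgebra of $H_0$ containing $C_{H_0}(Q)$. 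Using the remark preceding Proposition \ref{prop:Hdecomp}, it suffices to locate the primitives of $Q$: these are spanned by the images of $E_1$ and $E_2$, whose non-trivial coactions involve only $\om$ (not the $\psi_\la$'s). Hence $K=\k\{1,\om\}$, and $H_{(1)}=K\blackltimes Q$ has exactly $\{\om^j E_2^s E_1^l\mid 0\leq j,l\leq 1,\,s\in\mathbb{N}\}$ as a basis.

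For (b), the Hopf pairing axioms (i)–(v) are automatic, since $T_\infty(2,1,-1)^\bullet$ is a Hopf subalgebra of the finite dual $T_\infty(2,1,-1)^\circ$, and the evaluation pairing of a Hopf algebra with its finite dual always satisfies these identities. For (c), one direction of non-degeneracy is automatic: if $f\in T_\infty(2,1,-1)^\bullet$ kills every element of $T_\infty(2,1,-1)$, then already $f=0$ as an element of the linear dual. The interesting direction is: if $h\in T_\infty(2,1,-1)$ pairs trivially against every $\om^j E_2^s E_1^l$, then $h=0$. I would write $h=\sum_{j',l'}\alpha_{j'l'}g^{j'}x^{l'}$ in the given basis of $T_\infty(2,1,-1)$ and compute $\langle \om^j E_2^s E_1^l,\,g^{j'}x^{l'}\rangle$ by iterating the comultiplication of $T_\infty(2,1,-1)$, using the primitive-like behavior of $E_1$ (so that $E_1$ ``measures'' the $x$-degree modulo~$2$) and the fact that $E_2$ picks out higher powers of $x$, with $\om$ detecting the $g$-grading. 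The resulting matrix of pairings, arranged by the total degree $l+2s$ in $x$ and the parity $j$, should be block lower/upper triangular with non-zero diagonal, from which non-degeneracy follows block-by-block.

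The main obstacle is step (c): producing the explicit pairing matrix in a form where invertibility is visible. In particular, the monomial $E_2^s E_1^l$ is not homogeneous in any obvious single grading (because $\Delta(E_2)$ contains the ``mixed'' term $E_1\otimes\om E_1$), so the natural filtration has to be set up carefully — presumably by the total degree that assigns weight $1$ to $E_1$ and weight $2$ to $E_2$ — and one must check that the pairing respects the associated graded decomposition up to lower-order corrections. Once this bookkeeping is in place, each graded piece is a finite-dimensional pairing whose non-degeneracy reduces to an elementary computation with $q$-numbers at $q=-1$. The identification of $H_{(1)}$ in step (a), while conceptually a direct application of the paper's framework, also requires some care in computing the coefficient space $C_{H_0}(P(Q))$ explicitly to conclude that the $\psi_\la$'s do not enter $K$.
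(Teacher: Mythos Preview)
There is a genuine gap in step (a). You assert that the coradical of $H:=T_\infty(2,1,-1)^\circ$ is spanned by the ``grouplike-type'' elements $\psi_\lambda$ and $\psi_\lambda\omega$, and that $H$ therefore has the dual Chevalley property. This is incorrect: for $\lambda\neq 0$ the element $\psi_\lambda$ is not grouplike, since
\[
\Delta(\psi_\lambda)=\psi_\lambda\otimes\psi_\lambda+\lambda\,\psi_\lambda E_1\otimes\psi_\lambda\omega E_1.
\]
The simple subcoalgebra $C_\lambda$ containing $\psi_\lambda$ is $4$-dimensional, with basic multiplicative matrix
\[
\mathcal{C}_\lambda=\begin{pmatrix}\psi_\lambda & \lambda\psi_\lambda E_1\\ \psi_\lambda\omega E_1 & \psi_\lambda\omega\end{pmatrix},
\]
so $\psi_\lambda E_1\in H_0$. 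But then $(\psi_\lambda E_1)\cdot\psi_{-\lambda}=E_1$, which is a genuine $(1,\omega)$-primitive and hence lies outside $H_0$. Thus $C_\lambda C_{-\lambda}\not\subseteq H_0$, the coradical is not a subalgebra, and $H$ does \emph{not} have the dual Chevalley property. Your appeal to Theorem~\ref{thm:DCPnew} and to the smash-coproduct identification $H_{(1)}=K\blackltimes Q$ is therefore unavailable; indeed $Q=H/H_0^+H$ is not even defined as the paper sets it up.

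For comparison, the paper does not prove this lemma at all: it is quoted from \cite{LL??2}. What the paper \emph{does} prove, in the Proposition immediately following, is the identification $T_\infty(2,1,-1)^\bullet=H_{(1)}$, and it does so by the elementary route of Lemma~\ref{lem:linkcomp}: one exhibits an explicit upper-triangular multiplicative matrix $\mathcal{E}$ with diagonal $(1,\omega,1)$ and checks that every basis element $\omega^jE_2^sE_1^l$ occurs as an entry of some $\mathcal{E}^{\odot s}$, placing it in $H_{(1)}$; dimension counting against the remaining components $H_{(C_\lambda)}$ then forces equality. Your parts (b) and (c) are reasonable in outline, but note that the non-degeneracy computation is precisely the content imported from \cite{LL??2} and is not reproduced in the present paper.
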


At final, a similar process as Subsection \ref{subsection:D} follows the link-decomposition of $T_\infty(2,1,-1)^\circ$, by which we could identify the Hopf subalgebra $T_\infty(2,1,-1)^\bullet$ with a link-indecomposable component:

\begin{proposition}
The Hopf subalgebra $T_\infty(2,1,-1)^\bullet$ is exactly the link-indecomposable component of $T_\infty(2,1,-1)^\circ$ containing the unit element $1$.
\end{proposition}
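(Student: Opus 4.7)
The plan is to mimic the strategy of Subsection \ref{subsection:D} by splitting $H := T_\infty(2,1,-1)^\circ$ as a coalgebra direct sum of subcoalgebras, so that the component $H_{(1)}$ can be pinned down via Lemma \ref{lem:linkdecomp}(1). Concretely, I will show that $H = T_\infty(2,1,-1)^\bullet \oplus N$ as coalgebras for an explicit complement $N$, from which the two inclusions $T_\infty(2,1,-1)^\bullet \subseteq H_{(1)}$ and $H_{(1)} \subseteq T_\infty(2,1,-1)^\bullet$ follow easily.

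The first step is to enumerate the simple subcoalgebras of $H$. The coproduct formulas give $\k1$ and $\k\omega$ as two pointed simple subcoalgebras, while for each $\la \in \k^\ast$ one checks using $E_1^2 = 0$ and $\omega E_1 = -E_1\omega$ that
$$\mathcal{M}_\la := \begin{pmatrix} \psi_\la & \psi_\la E_1 \\ \la\,\psi_\la \omega E_1 & \psi_\la \omega \end{pmatrix}$$
satisfies $\Delta(\mathcal{M}_\la) = \mathcal{M}_\la \,\widetilde{\otimes}\, \mathcal{M}_\la$; its entries are linearly independent basis monomials, so $\mathcal{M}_\la$ is a basic multiplicative matrix spanning a four-dimensional simple subcoalgebra $C_\la$. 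A direct inspection of the given basis shows that these exhaust the simple subcoalgebras of $H$. Next, since $E_1$ is visibly a non-trivial $(\k1,\k\omega)$-primitive element, $\k1$ and $\k\omega$ are directly linked; and since $T_\infty(2,1,-1)^\bullet = \k\{\omega^j E_2^s E_1^l\}$ is pointed with grouplikes $\{1,\omega\}$, its only simple subcoalgebras are $\k1,\k\omega$, so it is link-indecomposable and hence contained in $H_{(1)}$.

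For the reverse inclusion, let $N := \k\{\psi_\la \omega^j E_2^s E_1^l \mid \la \neq 0,\ 0\leq j,l\leq 1,\ s\in\N\}$, so that $H = T_\infty(2,1,-1)^\bullet \oplus N$ as vector spaces. To check that $N$ is a subcoalgebra, recall that $\psi_\la$ commutes with $\omega, E_1, E_2$, so for a basis element with $\la \neq 0$,
$$\Delta(\psi_\la \omega^j E_2^s E_1^l) = \Delta(\psi_\la)\,\Delta(\omega^j E_2^s E_1^l),$$
where $\Delta(\psi_\la) = \psi_\la \otimes \psi_\la + \la\,\psi_\la E_1 \otimes \psi_\la \omega E_1$ carries $\psi_\la$ as a left factor on both sides of each tensorand, and $\Delta(\omega^j E_2^s E_1^l) \in T_\infty(2,1,-1)^\bullet \otimes T_\infty(2,1,-1)^\bullet$ because $T_\infty(2,1,-1)^\bullet$ is a Hopf subalgebra. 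Hence every resulting summand lies in $(\psi_\la\cdot T_\infty(2,1,-1)^\bullet)\otimes(\psi_\la\cdot T_\infty(2,1,-1)^\bullet)\subseteq N\otimes N$, proving the claim. Applying Lemma \ref{lem:linkdecomp}(1) to the decomposition $H = T_\infty(2,1,-1)^\bullet \oplus N$ now yields that no simple subcoalgebra of $N$ is linked to $\k1$, so $H_{(1)}\subseteq T_\infty(2,1,-1)^\bullet$, which together with the previous step finishes the proof. The main obstacle, and really the only content of the argument, is verifying that $N$ is a subcoalgebra: this hinges on the observation that every summand of $\Delta(\psi_\la)$ retains a $\psi_\la$ factor in both tensorands, which propagates when multiplied by an element of $T_\infty(2,1,-1)^\bullet \otimes T_\infty(2,1,-1)^\bullet$.
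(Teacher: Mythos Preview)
Your argument is correct and follows the same overall plan as the paper---exhibit a coalgebra direct sum with $T_\infty(2,1,-1)^\bullet$ as one summand and read off the component---but the execution is more elementary. The paper establishes both inclusions via the matrix machinery of Section~\ref{section2}: it builds the multiplicative matrix
$\mathcal{E}=\left(\begin{smallmatrix}1&E_1&E_2\\0&\omega&\omega E_1\\0&0&1\end{smallmatrix}\right)$
and invokes Lemma~\ref{lem:linkcomp} on $\mathcal{E}^{\odot s}$ to place each $\omega^jE_2^sE_1^l$ into $H_{(1)}$, and then identifies every remaining component $H_{(C_\la)}=\psi_\la T_\infty(2,1,-1)^\bullet$ individually via $\mathcal{E}^{\odot s}\odot\C_\la$. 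You bypass this: the inclusion $T_\infty(2,1,-1)^\bullet\subseteq H_{(1)}$ follows at once from pointedness and the single primitive $E_1$, and for the reverse you only need that the entire complement $N$ is a subcoalgebra, without ever decomposing it further. This makes the enumeration of the $C_\la$ (your $\mathcal{M}_\la$) in fact superfluous to your argument.

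One small slip: the assertion ``no simple subcoalgebra of $N$ is linked to $\k1$'' is a consequence of the lemma \emph{preceding} Lemma~\ref{lem:linkdecomp} (namely \cite[Lemma~4.8.3]{Rad12}), not of Lemma~\ref{lem:linkdecomp}(1) itself. The deduction $H_{(1)}\subseteq T_\infty(2,1,-1)^\bullet$ then needs the standard fact that any subcoalgebra of a coalgebra direct sum $A\oplus B$ splits as the direct sum of its intersections with $A$ and $B$; since $H_{(1)}$ is indecomposable and contains $1\in T_\infty(2,1,-1)^\bullet$, it must lie in that summand.
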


\begin{proof}
Denote the Hopf algebra $T_\infty(2,1,-1)^\circ$ simply by $H$. We claim that
\begin{equation}\label{eqn:Tcircdecomp}
H=H_{(1)}\oplus\left(\bigoplus\limits_{\la\in\k^\ast}H_{(C_\la)}\right),
\end{equation}
where
\begin{eqnarray*}
H_{(1)} &=& \k\{\om^j E_2^s E_1^l\mid 0\leq j,l\leq 1,\;s\in\mathbb{N}\}=T_\infty(2,1,-1)^\bullet,  \\
H_{(C_\la)} &=& \k\{\psi_\la\om^j E_2^s E_1^l\mid 0\leq j,l\leq 1,\;s\in\mathbb{N}\}.
\end{eqnarray*}

In details, evidently the set of simple subcoalgebras $\mathcal{S}$ contains
$$\{\k1,\;\k\om,\;C_\la \mid \la\in\k^\ast\},$$
where $C_\la$ has a basic multiplicative matrix
$\C_\la:=\left(\begin{array}{cc}
    \psi_\la & \la\psi_\la E_1  \\  \psi_\la\om E_1 & \psi_\la\om
 \end{array}\right)$
for each $\la\in\k^\ast$, and hence $\om C=C\om=C$.

One could find that
$$\mathcal{E}:=\left(\begin{array}{ccc}
    1 & E_1 & E_2  \\  0 & \om & \om E_1 \\ 0 & 0 & 1
 \end{array}\right)$$
is a multiplicative matrix. Clearly $\k1$ and $\k\om$ are linked. For any $0\leq j,l\leq 1,\;s\in\mathbb{N}$, the element $\om^j E_2^s E_1^l$ is an entry (with some non-zero scalar) of the multiplicative matrix $\mathcal{E}^{\odot s}$. Thus
$$\k\{\om^j E_2^s E_1^l\mid 0\leq j,l\leq 1,\;s\in\mathbb{N}\}\subseteq H_{(1)}.$$

On the other hand, for any $\la\in\k^\ast$ and $0\leq j,l\leq 1,\;s\in\mathbb{N}$, the element $\psi_\la\om^j E_2^s E_1^l$ is an entry (with some scalar) of the multiplicative matrix $\mathcal{E}^{\odot s}\odot\C_\la$, whose diagonal is made up with basic multiplicative matrices of $C_\la$. Thus
$$\k\{\psi_\la\om^j E_2^s E_1^l\mid 0\leq j,l\leq 1,\;s\in\mathbb{N}\}\subseteq H_{(C_\la)}.$$

It could be concluded that
$\mathcal{S}=\{\k1,\;\k\om,\;C_\la \mid \la\in\k^\ast\}$ and (\ref{eqn:Tcircdecomp}) holds.
\end{proof}

\begin{remark}
When $H$ is pointed, it is stated in \cite[Theorem 3.2]{Mon93} that $H_{(1)}$ is always a normal Hopf subalgebra. As for the example $H=T_\infty(2,1,-1)^\circ$ in this subsection, one could verify that $H_{(1)}$ is also normal as a Hopf subalgebra, according the equations such as
$$\psi_\la \om^j E_2^s E_1^l\psi_{-\la}=\psi_\la\psi_{-\la} \om^j E_2^s E_1^l=\om^j E_2^s E_1^l.$$
Some other examples might also be verified.
\end{remark}

\section*{Acknowledgement}

The author is extremely grateful for the referee's detailed comments, as well as his/her significant argument and techniques achieving Section \ref{section4}, which make this paper greatly more complete and valuable.
The author would also like to thank Professor Gongxiang Liu and Professor Shenglin Zhu for useful discussions, especially on further possible results relevant to the paper.

\section*{References}

\end{document}